\NewDocumentCommand{\cfracdots}{ }
  {
   \rule{0pt}{1.5\baselineskip}
   \raisebox{.5\baselineskip}{\enspace$\ddots$\enspace}
  }
\NewDocumentCommand{\xcontfrac}{ s O{c} >{\SplitArgument{1}{;}}m }
  { 
   \IfBooleanTF{#1}
     { \cfrac_inline:nn #3 }
     { \cfrac_map:nnn { #2 } #3 }
  }
\def\Int{{\rm int}}
\def\Id{{\rm Id}}
\def\id{{\rm Id}}
\def\cU{\mathcal U}
\def\cV{\mathcal V}
\def\cA{\mathcal A}
\newcommand{\cR}{\mathcal R}
\newcommand{\cT}{\mathcal T}
\newcommand{\cL}{\mathcal L}
\newcommand{\norm}[1]{\| #1\|}
\def\b{          \beta}
\def\a{          \alpha}
\def\B{          \mathcal B}
\def\cA{          \mathcal A}
\def\cB{          \mathcal B}
\def\cC{          \mathcal C}
\def\cD{          \mathcal D}
\def\cG{          \mathcal G}
\def\cH{          \mathcal H}
\def\clb{   \color{black}}
\def \R{{\mathbb R}}
\def \Z{{\mathbb Z}}
\def \N{{\mathbb N}}
\def \l{{\lambda}}
\def \a{{\alpha}}
\newcommand{\A}{{\mathbb A}}
\newcommand{\prf}{{\begin{proof}}}
\newcommand{\epf}{{\end{proof}}}
\newcommand{\G}{{\mathcal G}}
\newcommand{\Q}{{\mathbb Q}}
\newcommand{\D}{{\mathbb D}}
\newcommand{\ary}{\begin{eqnarray}}
\newcommand{\eary}{\end{eqnarray}}
\newcommand{\aryst}{\begin{eqnarray*}}
\newcommand{\earyst}{\end{eqnarray*}}
\newcommand{\enmt}{\begin{enumerate}}
\newcommand{\eenmt}{\end{enumerate}}
\DeclareMathOperator{\diff}{Diff}
\newtheorem{lemma}{\sc lemma}[section]
\newtheorem{prop}[lemma]{\sc Proposition}
\newtheorem{corollary}{\sc corollary}
\newtheorem{cor}[lemma]{\sc corollary}
\theoremstyle{definition}
\def\bee{\begin{equation}}
\def\eee{\end{equation}}
\newtheorem{defi}{\sc Definition}[section]
\theoremstyle{rema}
\newtheorem{rema}{\sc Remark}
\newcommand{\pdvr}[2]
{\dfrac{\partial^{#2} #1}{\partial \theta^{#2_1} \partial r^{#2_2}}}
\newcommand{\pdvrs}[2]
{\partial^{#2} #1 /\partial \theta^{#2_1} \partial r^{#2_2}}
\newtheorem{thm}{\sc Theorem}
\numberwithin{equation}{section}
\definecolor{blue}{rgb}{0,0,1}
\definecolor{red}{rgb}{1,0,0}
\definecolor{green}{rgb}{0,.7,0}
\author{Barney Bramham}
\address{Ruhr University Bochum}
\email{barney.bramham@rub.edu}
\author{Zhiyuan Zhang}
\address{CNRS, Institut Galil\'ee,
	Universit\'e Paris 13}
\email{zhiyuan.zhang@math.univ-paris13.fr}
\begin{document}

\title[On pseudo-rotations of the annulus with generic rotation number]{On pseudo-rotations of the annulus\\ with generic rotation number}
\date{\today}

\maketitle

\begin{abstract}	
	We show that for a Baire generic rotation number $\alpha \in \mathbb{R} / \mathbb{Z}$, the set of area preserving $C^\infty$-pseudo-rotations of the annulus with rotation number $\alpha$ equals the closure of the set of area preserving $C^\infty$-pseudo-rotations which are smoothly conjugate to the rotation $R_{\alpha}$. As a corollary, a $C^\infty$-generic area preserving pseudo-rotation of the annulus with a Baire generic rotation number $\alpha$ is weakly mixing.
\end{abstract}

\section{Introduction} 

\subsection{Results} 
Our results, which were announced in the survey article \cite{BZ}, concern smooth and area preserving pseudo-rotations on the closed annulus $\A = \R / \Z \times [0,1]$, which we will henceforth refer to simply as 
{\it pseudo-rotations} and denote by: 
\aryst
				F^{\infty}_{\A} := \Big\{ f \in \diff_0^\infty(\A, \omega) \mid \mbox{$f$ has no periodic points} \Big\}
\earyst
where $\omega$ is the standard smooth area form on $\A$ and $\diff_0^\infty(\A, \omega)$ denotes those area preserving diffeomorphisms that are isotopic to the identity.   
An obvious subset of the pseudo-rotations are the {\it true rotations} with irrational rotation number. 
By the true rotations we mean:  
\aryst
				O^{\infty}_{\A} := \Big\{ hR_t h^{-1} \mid h  \in \diff^\infty(\A, \omega),\ t\in\R/\Z \Big\}
\earyst
where $R_t$ is the rigid rotation $([x],y)\mapsto([x+t],y)$.  

%
%
%
%
Anosov and Katok \cite{AK70} discovered the conjugation method, which produces pseudo-rotations with interesting dynamics, i.e.\ that are not true rotations.        
By construction, their method yields pseudo-rotations as limits of true rotations, i.e.\ as elements of $$\overline{O^{\infty}_{\A} }\cap F^{\infty}_{\A},$$   
so that one could refer to this set as the {\it Anosov-Katok pseudo-rotations}, \cite{JS}.  
All known pseudo-rotations are Anosov-Katok pseudo-rotations, see e.g.\cite{AK70, FK, FS, LeRS},  and a natural question is whether this is true in general.   Our main result 
confirms this in a large number of cases.   

To make our statement precise, recall that each pseudo-rotation 
$f:\A\to\A$ admits a {\it rotation number} $$\rho(f) \in (0,1) \setminus \Q$$ 
which can be seen as the mean asymptotic rotation number of every trajectory about the annulus, see Definition \ref{def rotation number}, and denote the set of all {\it pseudo-rotations with rotation number $\a \in (0,1) \setminus \Q$} by
\aryst
				F^{\infty}_{\A}(\a) := \Big\{ f \in F^\infty_\A \mid \rho(f) = \a \Big\}.   
\earyst
This contains the {\it true rotations with rotation number $\a$}: 
\aryst
				O^{\infty}_{\A}(\a) := \Big\{ hR_\a h^{-1} \mid h  \in \diff^\infty(\A, \omega) \Big\}. 
\earyst 
Our main result is that for a Baire generic rotation number, all pseudo-rotations are Anosov-Katok pseudo-rotations.  Infact our result is slightly stronger, since we can obtain this using the closure of the space of true rotations with {\it fixed} rotation number.  More precisely: 
\begin{thm}\label{main thm F = closure O}
	There exists a Baire subset $\cA\subset (0,1) \setminus \Q$ of rotation numbers with the following property: For all $\a \in \cA$ 
	\ary\label{E: F = closure O}
	F^{\infty}_{\A}(\a) = \overline{O^{\infty}_{\A}(\a)}
	\eary
	where the closure is taken in $\diff^\infty(\A)$ with respect to the $C^\infty$-topology.
\end{thm}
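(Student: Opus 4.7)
The inclusion $\overline{O^{\infty}_{\A}(\alpha)}\subseteq F^{\infty}_{\A}(\alpha)$ is immediate: the rotation number is $C^0$-continuous on $\diff_0^\infty(\A,\omega)$, and for irrational $\alpha$ any area preserving annulus diffeomorphism with $\rho=\alpha$ is automatically periodic-point-free (a periodic point would have rational rotation number, contradicting $\rho(f)=\alpha$). The content of the theorem is therefore the reverse inclusion, i.e., that every $f\in F^{\infty}_{\A}(\alpha)$ is $C^\infty$-approximable by conjugates of $R_\alpha$, for a Baire-generic set of $\alpha$.

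First I would realise $\cA$ as a dense $G_\delta$ of Liouville-like rotation numbers tailored to the scheme below. Fix a sufficiently rapidly decreasing function $\sigma\colon\N\to(0,\infty)$ (to be quantified later) and set
\[
\cA \;:=\; \bigcap_{N\ge 1}\bigcup_{q\ge N}\big\{\alpha\in(0,1)\setminus\Q : \exists\, p\in\Z,\ |\alpha-p/q|<\sigma(q)\big\},
\]
which is manifestly a dense $G_\delta$. Given $\alpha\in\cA$ and $f\in F^{\infty}_{\A}(\alpha)$, I would pick rationals $p_n/q_n\to\alpha$ with $|\alpha-p_n/q_n|<\sigma(q_n)$ and construct inductively $h_n\in\diff^\infty(\A,\omega)$, $h_{n+1}=h_n g_n$, with
\[
\big\|h_n R_{p_n/q_n} h_n^{-1} - f\big\|_{C^n}\le 2^{-n},
\]
arranging that each correction $g_n$ almost commutes with $R_{p_n/q_n}$ (so the level-$n$ approximation is preserved at later steps) while matching $f$ to higher order at the next rational scale $p_{n+1}/q_{n+1}$. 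A Fa\`a di Bruno telescoping estimate
\[
\big\|h_n R_\alpha h_n^{-1} - h_n R_{p_n/q_n} h_n^{-1}\big\|_{C^k}
\ \lesssim_k\ \|h_n\|_{C^{k+1}}^{2k}\,|\alpha-p_n/q_n|,
\]
together with the Liouville choice $\sigma(q_n)\ll \|h_n\|_{C^{k+1}}^{-2k}\cdot 2^{-n}$, would then force $h_n R_\alpha h_n^{-1}\to f$ in $C^\infty$, yielding $f\in\overline{O^{\infty}_{\A}(\alpha)}$.

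The crux is the inductive step itself, i.e., producing $g_n$ improving the approximation at scale $q_{n+1}$. This reduces to a quantitative \emph{near-periodicity} principle for pseudo-rotations: if $f$ is a smooth area preserving pseudo-rotation whose rotation number is $\sigma$-close to $p/q$, then up to conjugation by some $k\in\diff^\infty(\A,\omega)$ of controlled norm the iterate $k^{-1}f^q k$ is $C^n$-close to the identity. This is where the pseudo-rotation hypothesis is essential -- for a generic area preserving map with rotation number close to $p/q$, the iterate $f^q$ need not be anywhere near the identity -- and where deeper structural results for area preserving annulus pseudo-rotations, in the spirit of Franks, Le Calvez and the authors' announcement \cite{BZ}, are needed. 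The inductive control of $\|h_n\|_{C^k}$ that emerges from this step is precisely what prescribes the decay rate of $\sigma(q)$ defining $\cA$, which is why $\cA$ can only be specified Baire-generically rather than explicitly.
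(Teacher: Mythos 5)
Your reduction of the theorem to an inductive Anosov--Katok-type scheme leaves the entire substance of the result as a black box. The step you call the ``crux'' --- producing, for a \emph{given} $f$, a conjugacy of controlled norm that matches $f$ at the next rational scale --- is not a known near-periodicity principle one can cite; it is the whole paper. (What is available from \cite{A} is only that $f^{q}$ itself is $C^0$-close to the identity when $\|q\alpha\|_{\R/\Z}$ is small, with $C^r$-closeness by interpolation; no conjugation is involved, and this alone does not produce the map $g_n$ you need.) The actual proof is also structurally different from what you propose: there is no induction and no telescoping. For each $(r,\epsilon)$ one performs a single-shot construction at one well-chosen odd index $n$: a renormalization scheme produces a $q_{n+1}$-good Brouwer curve $\gamma$ whose fundamental domain admits coordinates with bounds depending only on $q_{n+1}$ and $\|f\|_{\diff^{r+6}}$; one then deforms $f$ near $f^{q_{n+1}-1}(\gamma)$ to a map $g=\sigma\circ f$ with $g^{q_{n+1}}=\id$ exactly and $d_{\diff^{r}}(f,g)$ small; the tiling by iterates of the fundamental domain yields an explicit $h$ with $hgh^{-1}=R_{p_{n+1}/q_{n+1}}$ and $\|h\|_{\diff^{r}}$ controlled; finally $|\alpha-p_{n+1}/q_{n+1}|\le 1/(q_{n+1}q_{n+2})$ plus the bound on $h$ gives closeness of $hR_\alpha h^{-1}$ to $g$, and a Dacorogna--Moser correction makes $h$ area preserving. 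Your proposal contains none of these mechanisms, and the estimate you write for $\|h_nR_\alpha h_n^{-1}-h_nR_{p_n/q_n}h_n^{-1}\|_{C^k}$ is the only part of the argument that is genuinely routine.

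Two further problems. First, your definition of $\cA$ is circular: you say the decay rate $\sigma(q)$ is ``prescribed by the inductive control of $\|h_n\|_{C^k}$,'' but $\|h_n\|$ depends on $f$, whereas $\cA$ must be fixed before $f$ is chosen. The paper resolves this by making the arithmetic condition (on three consecutive denominators $q_n,q_{n+1},q_{n+2}$, $n$ odd) depend only on a triple $(r,M,\epsilon)$ with $M$ an a priori bound on $\|f\|_{\diff^{r+6}}$, and then intersecting over countably many such triples; you would need an analogous device. Second, the inclusion $\overline{O^\infty_\A(\alpha)}\subset F^\infty_\A(\alpha)$ is not immediate as you claim: a limit of conjugates of $R_\alpha$ is not known a priori to have a single well-defined rotation number, so the existence of a periodic point is not directly contradicted by ``$\rho(f)=\alpha$''; one needs Franks' theory of returning discs (see \cite[Proposition 33]{B2}) to exclude periodic points.
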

Infact $\cA$ will be a subset of the Liouville numbers, which we recall is itself a Baire subset of $(0,1) \setminus \Q$ with measure zero. 
Note that we do not need to write $\overline{O^{\infty}_{\A}(\a)}\cap F^{\infty}_{\A}(\a)$ on the right hand side of \eqref{E: F = closure O} because 
of the inclusion $\overline{O^{\infty}_{\A}(\a)}\subset F^{\infty}_{\A}(\a)$, which, while not entirely trivial, 
is not difficult to prove using Franks' theory of positively and negatively returning discs \cite{F88}.  
Indeed this inclusion holds for all irrational $\a$, not just some Baire subset, and even the $C^0$-version holds; for a proof of these claims see \cite[Proposition 33]{B2}.    

Thus, this article is devoted to proving the opposite inclusion  
\ary\label{F subset closure O}
				F^{\infty}_{\A}(\a) \subset \overline{O^{\infty}_{\A}(\a)}   
\eary
namely, that each pseudo-rotation with rotation number in $\cA$ can be $C^\infty$-approximated by true rotations with the same rotation number.  

It is interesting to note that for Diophantine $\a$ there is possibly more reason to expect \eqref{F subset closure O} to hold, since it would support a positive answer to a long standing question of Herman, see below.    However, \eqref{F subset closure O} is currently not known for any Diophantine rotation numbers.

%
%
%
%

Combining Theorem \ref{main thm F = closure O} with a result from \cite{AK70} we 
have the following interesting corollary:

\begin{corollary}\label{cor wm}
For a Baire generic $\a \in (0,1) \setminus \Q$, 	the set of weakly mixing pseudo-rotations in $F^{\infty}_{\A}(\a)$ forms a Baire set with empty interior, with respect to the $C^\infty$-topology.   
\end{corollary}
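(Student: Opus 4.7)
The plan is to combine Theorem \ref{main thm F = closure O} with the classical density statement from \cite{AK70} for weakly mixing elements inside $\overline{O^{\infty}_{\A}(\a)}$. Fix $\a \in \cA$ from Theorem \ref{main thm F = closure O}, so that
\[
F^{\infty}_{\A}(\a) = \overline{O^{\infty}_{\A}(\a)}.
\]
The Anosov--Katok conjugation method produces weakly mixing pseudo-rotations as $C^\infty$-limits of conjugates of $R_\a$, and by starting the inductive scheme from an arbitrary element of $\overline{O^{\infty}_{\A}(\a)}$ with conjugating perturbations chosen arbitrarily small, it shows that the weakly mixing pseudo-rotations are in fact \emph{dense} in $\overline{O^{\infty}_{\A}(\a)}$, at least for $\a$ in some Baire subset of the Liouville numbers. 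I would intersect $\cA$ with this subset if necessary to obtain a Baire set of rotation numbers on which both statements hold.

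Next I would verify that the weakly mixing set $W(\a) \subset F^{\infty}_{\A}(\a)$ is a $G_\delta$ in the $C^\infty$-topology. Pick a countable $L^2$-dense family $\{\varphi_i\} \subset C^\infty(\A)$ of zero-mean functions. Weak mixing of $f$ with respect to the area is equivalent to
\[
\liminf_{N\to\infty}\, \frac{1}{N} \sum_{k=0}^{N-1} \bigl|\langle \varphi_i \circ f^k, \varphi_j \rangle\bigr| \,=\, 0 \qquad \text{for every } i,j.
\]
Each such condition is a countable intersection of $C^0$-open (and hence $C^\infty$-open) conditions on $f$, so $W(\a)$ is a $G_\delta$; combined with the density supplied above, $W(\a)$ is a Baire subset of $F^{\infty}_{\A}(\a)$.

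For the empty interior, I would observe that $O^{\infty}_{\A}(\a) \subset F^{\infty}_{\A}(\a) \setminus W(\a)$: for every $f = h R_\a h^{-1}$ the functions $e^{2\pi i k x}\circ h^{-1}$ are $L^2$-eigenfunctions of $f$ with eigenvalue $e^{2\pi i k \a}$ for each $k \in \Z$, so $f$ has non-trivial point spectrum and fails to be weakly mixing. Since $O^{\infty}_{\A}(\a)$ is $C^\infty$-dense in $F^{\infty}_{\A}(\a)$ by Theorem \ref{main thm F = closure O}, the complement $F^{\infty}_{\A}(\a) \setminus W(\a)$ is also dense, and consequently $W(\a)$ contains no non-empty open subset of $F^{\infty}_{\A}(\a)$.

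The main delicate point is the first step, namely extracting from \cite{AK70} the fact that the weakly mixing elements are \emph{dense} in $\overline{O^{\infty}_{\A}(\a)}$, not merely non-empty. This requires re-running the approximation-by-conjugation scheme with the initial model replaced by an arbitrary element of $\overline{O^{\infty}_{\A}(\a)}$; since that element already $C^\infty$-approximates a smooth conjugate of $R_\a$, the inductive step of the Anosov--Katok construction can be continued from it, provided $\a$ is sufficiently Liouville, which is exactly the restriction encoded in $\cA$.
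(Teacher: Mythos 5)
Your argument is correct and follows essentially the same route as the paper: combine Theorem \ref{main thm F = closure O} with the Anosov--Katok statement that weak mixing is $C^\infty$-generic in $\overline{O^{\infty}_{\A}(\a)}$ for a Baire generic $\a$ (intersecting the two Baire sets of rotation numbers), and deduce empty interior from the fact that elements of $O^{\infty}_{\A}(\a)$ are never weakly mixing while $O^{\infty}_{\A}(\a)$ is dense in $F^{\infty}_{\A}(\a)$. The only difference is presentational: the paper cites \cite{AK70} as a black box for the full genericity statement, whereas you unpack the $G_\delta$ characterization and the density step explicitly.
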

\begin{proof}
	On the one hand, Anosov and Katok, see \cite{AK70}, show that for a Baire generic $\a$, weak mixing is a $C^\infty$-generic property in $\overline{O^{\infty}_{\A}(\a)}$.   Thus by Theorem \ref{main thm F = closure O} 
	weak mixing is a $C^\infty$-generic property in $F^{\infty}_{\A}(\a)$.  
	
	On the other hand, the second statement follows since elements of $O^{\infty}_{\A}(\a)$ are never weak mixing and by Theorem \ref{main thm F = closure O} the complement $F^{\infty}_{\A}(\a)\backslash O^{\infty}_{\A}(\a)$ 
	has empty interior. 
\end{proof}
\begin{rema}
	Corollary \ref{cor wm} is rather sharp in the following sense:
	\enmt
	\item The genericity of $\a$  cannot be improved to positive Lebesgue measure. This follows from the KAM result of Fayad-Krikorian \cite{FKrik} (attributed by the authors to Herman), that a neighborhood of $R_\a$ in 	$F^\infty_{\A}(\a)$ lies in $O^\infty_{\A}(\a)$, for any Diophantine $\a$.
	\item Weakly mixing cannot be replaced by mixing. In fact, it follows from the proof of \cite{Br} and \cite{A} that for a Baire generic $\a$, $F^\infty_\A(\a)$ contains no topologically mixing maps. 
	\eenmt
\end{rema}

\subsection{Related literature}
We recall here how Theorem \ref{main thm F = closure O} relates to known results and open questions on pseudo-rotations.   We also explain an analogy to work of 
Herman and Yoccoz on circle diffeomorphisms which partially inspired the proof in this article.   

\subsubsection{A question of Katok and the $C^0$-approximations in \cite{B2}}
Problem 1 in \cite{Kopen} asks whether every area preserving surface diffeomorphism with zero topological entropy is a limit of integrable systems.
The result in \cite{B2} is a $C^0$-approximation result for pseudo-rotations of the closed $2$-dimensional disc.   The approach used pseudo-holomorphic curve methods, i.e.\ 
completely different to those in the current article.   

The precise statement in \cite{B2} was as follows: Any pseudo-rotation $f$ of the disc, can be $C^0$-approximated by a smooth diffeomorphism $f_k$, where each $f_k$ 
can be smoothly conjugated to a rigid rotation $R_{p_k/q_k}$, where $p_k/q_k\in\Q$ is close to the rotation number of $f$.   
See also \cite{L} for a similar result.  


In constrast our result provides a much stronger approximation scheme in the following three senses: 
\begin{itemize}
	\item The convergence $f_k\to f$ is in the $C^\infty$-topology rather than just $C^0$.   
	\item Each $f_k$ is area preserving.  
	\item Each $f_k$ has the same rotation number as $f$ itself.  
\end{itemize}
The one sense in which the result in \cite{B2} is stronger, is that it applies without restriction on the (irrational) rotation number of the pseudo-rotation.   

\subsubsection{The weak mixing pseudo-rotations of \cite{AK70} and \cite{FS}}
Another motivation for this work is the important extensions in Fayad-Saprykina \cite{FS}, see also Fayad-Katok \cite{FK}, of the above mentioned work of 
Anosov and Katok \cite{AK70}.   Indeed, they construct examples of Anosov-Katok pseudo-rotations that are weakly mixing, for each Liouville rotation number.     
Curiously,  it is not clear whether the set of weakly mixing pseudo-rotations is generic.
   However, it is automatic from Theorem \ref{main thm F = closure O}, that this must be the case when 
$\a$ lies in the possibly smaller Baire-generic set of Liouville rotation numbers $\cA$ produced by Theorem \ref{main thm F = closure O}.  

\subsubsection{Herman's question on Diophantine pseudo-rotations}
In \cite{H98} M.\ Herman asks whether every smooth pseudo-rotation of the annulus with Diophantine rotation number, i.e.\ irrational and non-Liouvillean, is conjugate 
to a rigid rotation.   For smooth conjugacies this is the question whether 
\aryst
			F_{\A}^\infty(\a) = O_{\A}^\infty(\a)
\earyst
for all Diophantine $\a$.   
There are two partial results, using KAM methods, that we have referred to above: Herman showed that if $f$ is a Diophantine pseudo-rotation 
then there are invariant circles near the boundary filling up a subset of positive measure.    This implies in particular that the results in \cite{FS} are sharp.   
The second result is referred to by the authors as Herman's last geometric theorem: Namely, Fayad and Krikorian \cite{FKrik} showed that if $f$ is a Diophantine 
pseudo-rotation that is sufficiently close to the rigid rotation $R_{\rho(f)}$, then $f$ is conjugate to $R_{\rho(f)}$ via a smooth area preserving diffeomorphism.     

\subsubsection{The Herman-Yoccoz Theorems on circle diffeomorphisms}
Theorem \ref{main thm F = closure O} can be seen as a natural analogue of the Liouville part of the following well-known theorem of Herman and Yoccoz in \cite{H,Yo95}: 

\begin{thm}[Herman-Yoccoz]\label{thm HY}
	For any irrational $\a \in \R/\Z$, we denote by $F^\infty(\a)$ the set of $C^\infty$ circle diffeomorphisms with rotation number $\a$, and denote by $O^\infty(\a)$ the set of $C^\infty$ circle diffeomorphisms which are $C^\infty$-conjugate to the standard rotation $R_\a$.  
Then we have
	\ary\label{E: HY}
	F^\infty(\a) = \overline{O^\infty(\a)} 
	\eary
	for all $\a$ irrational,  
	where the closure is in the $C^\infty$-topology.
\end{thm}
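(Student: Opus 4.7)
My plan splits the proof according to the arithmetic of $\a$.

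\textbf{Diophantine case.} If $\a$ is Diophantine, the global linearization theorem of Herman and Yoccoz asserts that every $f\in F^\infty(\a)$ is in fact $C^\infty$-conjugate to $R_\a$. Thus $F^\infty(\a)=O^\infty(\a)$ and \eqref{E: HY} holds trivially. The reverse inclusion $\overline{O^\infty(\a)}\subset F^\infty(\a)$, needed in both cases, is elementary: the rotation number is continuous in the $C^0$-topology, and by Denjoy's theorem it is irrational precisely on the set of $C^\infty$ circle diffeomorphisms with no periodic points.

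\textbf{Liouville case.} Fix $f\in F^\infty(\a)$. Denjoy's theorem yields an orientation preserving homeomorphism $H$ of $\R/\Z$, unique up to precomposition with a rotation, with $f=HR_\a H^{-1}$; in general $H$ is not smooth. The goal is to produce $h_n\in\diff^\infty(\R/\Z)$ with $h_nR_\a h_n^{-1}\to f$ in $C^\infty$. By a diagonal argument it suffices to show: for every $k\in\N$ and $\eps>0$ there exists $h\in\diff^\infty(\R/\Z)$ with $\|hR_\a h^{-1}-f\|_{C^k}<\eps$.

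My construction of $h$ is an approximation-by-conjugation driven by the fast rational convergents supplied by the Liouville property. Pick a continued fraction convergent $p/q$ of $\a$ with $q$ large. Define $h\in\diff^\infty(\R/\Z)$ by prescribing $h(jp/q):=f^{j}(H(0))$ for $0\le j<q$ and then smoothly interpolating between these $q$ values so that, at each $jp/q$, the $k$-jet of $h$ is chosen to make the $q$-periodic diffeomorphism $hR_{p/q}h^{-1}$ match the $k$-jet of $f$ at $f^{j}(H(0))$. A standard spline-type interpolation on the fundamental intervals of length $1/q$ then yields $\|hR_{p/q}h^{-1}-f\|_{C^k}<\eps/2$ for $q$ large, together with a bound $\|h\|_{C^{k+1}}\le Cq^{k+1}$ depending on $f$ and $k$ but not on $q$. (The residual discrepancy at the last orbit point, where $f^{q}(H(0))\neq H(0)$, is itself controlled by $|\a-p/q|$ and enters into the bound.) Now compare $hR_\a h^{-1}$ with $hR_{p/q}h^{-1}$ via the chain rule:
\[
\|hR_\a h^{-1}-hR_{p/q}h^{-1}\|_{C^k}\le C_k\,P_k\!\left(\|h\|_{C^{k+1}}\right)|\a-p/q|
\]
for some polynomial $P_k$. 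Since $\a$ is Liouville, among the convergents we can find $p/q$ with $|\a-p/q|$ smaller than any prescribed inverse polynomial in $q$; in particular we can arrange $|\a-p/q|<\eps/(2C_k P_k(Cq^{k+1}))$. The triangle inequality then gives $\|hR_\a h^{-1}-f\|_{C^k}<\eps$, as desired.

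\textbf{Main obstacle.} The hard part is the quantitative interpolation. One must build a smooth orientation-preserving diffeomorphism $h$ that (i) sends the $q$-orbit of $R_{p/q}$ to a prescribed $q$-point subset mimicking the dynamics of $f$, (ii) has $k$-jets at those points making $hR_{p/q}h^{-1}$ reproduce the $k$-jet of $f$, and (iii) has $C^{k+1}$-norm growing at most polynomially in $q$. Simultaneously enforcing (i)--(iii), while absorbing the residual mismatch coming from $\rho(f)=\a\neq p/q$, is the technical heart of the proof, and it is precisely the Liouville --- rather than Diophantine --- condition on $\a$ that allows the chain-rule error to be swallowed after $h$ has been constructed.
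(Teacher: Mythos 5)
This theorem is not proved in the paper at all: it is quoted as background from Herman \cite{H} and Yoccoz \cite{Yo95}, and the surrounding text only sketches how the known proof splits into the Diophantine case (rigidity, $F^\infty(\a)=O^\infty(\a)$) and the Liouville case (Yoccoz's approximation by quasi-rotations). Your overall architecture --- rigidity for Diophantine $\a$, an approximation-by-conjugation along convergents for Liouville $\a$, plus the easy $C^0$-continuity argument for $\overline{O^\infty(\a)}\subset F^\infty(\a)$ --- matches that sketch. The Diophantine half is fine, since there you are allowed to invoke the linearization theorem as a black box.

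In the Liouville half, however, there is a genuine gap, and it sits exactly where you placed your ``main obstacle'': the bound $\|h\|_{C^{k+1}}\le Cq^{k+1}$ with $C$ independent of $q$ is asserted, not proved, and it is essentially equivalent to the theorem. The interpolating $h$ must map the $q$ equal arcs of length $1/q$ onto the arcs cut out by the orbit $\{f^j(H(0))\}_{0\le j<q}$, and must carry prescribed $k$-jets at the endpoints. Controlling this requires quantitative control of the geometry of the dynamical partition of $f$: already at order one, the comparability of the gap lengths with $1/q_n$ is Denjoy's inequality, and at order $k$ one needs polynomial (in $q_n$) bounds on $\|D^jf^{q_n}\|$ for $j\le k$ --- the higher-order Denjoy--Yoccoz estimates of \cite{Yo84,Yo95}, which are deep theorems and are precisely the input your ``standard spline-type interpolation'' silently consumes. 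Without them nothing prevents the jets of $f$ along the orbit, or the relative sizes of nearby gaps, from forcing $\|h\|_{C^{k+1}}$ to grow super-polynomially in $q$, in which case the Liouville condition can no longer absorb the chain-rule error $P_k(\|h\|_{C^{k+1}})\,|\a-p/q|$. A second, smaller gap is the passage from matching $k$-jets at the $q$ orbit points to the uniform estimate $\|hR_{p/q}h^{-1}-f\|_{C^k}<\eps/2$ on the whole circle: on the interior of each gap this again costs derivatives of $h$ of order higher than $k$ multiplied by the gap length, so it must be folded into the same (unproved) quantitative bookkeeping. As written, the proposal is a correct statement of the strategy together with an honest admission that the hard estimate is missing, rather than a proof.
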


The proof of \eqref{E: HY} splits into two cases, depending on whether $\a$ is Diophantine or Liouvillean (denoted $\cD$ and $\cL$ respectively).   The Diophantine case is a trivial consequence 
of the Diophantine rigidity phenomenon: 
	\ary\label{E: HY 2}
	F^\infty(\a) = O^\infty(\a)\qquad \forall \a\in\cD
	\eary
which is itself a famous result of Herman \cite{H} for a full-measure subset of $\cD$, 
and which was later extended to all of $\cD$ by Yoccoz  \cite{Yo95}.

The Liouville part of Theorem \ref{thm HY} was conjectured by Herman in \cite[Conjecture 7.1]{H}. In fact Herman already showed that Theorem \ref{thm HY} holds for a Baire generic set of $\a$, see \cite[Theorem 7.3]{H}. However, his proof was based on the Diophantine part of Theorem \ref{thm HY} (at least for a full measure set of $\a$), and used certain properties of the function $t \mapsto \rho(R_t f)$ of a circle diffeomorphism $f$. 

The full answer to Herman's conjecture - about the Liouville part of Theorem \ref{thm HY} - was provided by Yoccoz in \cite{Yo95}. Yoccoz showed that any $C^\infty$-smooth circle diffeomorphism 
with a Liouville rotation number can be $C^\infty$-approximated by a {\it quasi-rotation}: this is a class of circle diffeomorphisms which, among other things, admits a renormalization that {\it is} a standard rotation. 

Our proof of Theorem \ref{main thm F = closure O} is somewhat similar to this proof of Yoccoz: We also consider certain renormalizations of a pseudo-rotation. However, the type of estimates are very different. We are unable to transfer the strong estimates for circle diffeomorphisms, such as Denjoy's inequality in \cite{Yo84}, to general pseudo-rotations, due to the possible occurrence of complicated geometry which does not appear in dimension $1$. On the other hand, the area-preserving hypothesis provides us with certain strong $C^0$-estimates established in \cite{A}. Combining such estimates with a suitable arithmetic condition, we are able to extract some useful information from a sequence of suitably renormalized pseudo-rotations.

%
%
%
%

It is still an open question of Herman \cite{H98} whether the Diophantine rigidity seen in Theorem \ref{thm HY} holds for pseudo-rotations.   Moreover, it is unclear how to deform a general pseudo-rotation within say the set of pseudo-rotations and true rotations, and change the rotation number.  
This blocks a direct generalisation of Herman's approach for pseudo-rotations.   

We should also point out that the study of pseudo-rotations can be essentially traced back to the question of Birkhoff \cite{Bir} (see also \cite{H98}) as to whether there are non-trivial analytic diffeomorphisms of the $2$-sphere with $2$ fixed points, i.e.\ whether $F^{\omega}_{S^2}= O^{\omega}_{S^2}$.   That these spaces can be different was recently announced by Berger \cite{Ber}.

Recently, Avila and Krikorian have announced \footnote{See the minicourse of Krikorian in the program \lq\lq Renormalization and universality in Conformal Geometry, Dynamics, Random Processes, and Field Theory \rq\rq in 2020 at Simons Center for Geometry and Physics.} an improvement of Theorem \ref{main thm F = closure O}: For every non-Brjuno $\alpha$, one has  $F^{\infty}_{\A}(\a) = \overline{O^{\infty}_{\A}(\a)}$.    (The non-Brjuno numbers form a strict subset of the Liouville numbers that is strictly larger than the set $\cA$ in Theorem \ref{main thm F = closure O}).   Moreover, they have announced the following result:  For every pseudo-rotation $f$ in an open neighborhood of the rigid rotations on the closed disc, there exists a sequence of area-preserving diffeomorphism $h_n$ such that $h_n f h_n^{-1}$ converges to $R_{\rho(f)}$ in the $C^\infty$ topology.
Their method involves delicate estimates on high iterates of the maps, while our method for getting this weaker result relies only on rather soft estimates.

\subsection{Outline of the construction}\label{S:overview} 
We outline here our construction of the integrable approximations that will prove \eqref{F subset closure O} and hence Theorem \ref{main thm F = closure O}.  

Let $r\in\N, M>0$ and $\epsilon>0$ be arbitrary and fixed.     Consider a pseudo-rotation $f:\A\to\A$ with rotation number $\alpha\in(0,1)\backslash\Q$ and 
$$\|f\|_{\diff^{r+6}(\A)}\leq M.$$    
If $\a$ satisfies a Liouville-type condition that depends only on $(r,M,\epsilon)$, we will modify $f$ in a $\diff^{r-2}(\A)$-small sense as measured by $\epsilon$, to a map that is conjugate to the rigid rotation $R_{\a}$.   The condition on $\a$ will be of the form described in Lemma \ref{lem construct rotation nbs}, for some function $P=P_{(r,M,\epsilon)}$, hence actually open and dense in $(0,1)\backslash\Q$.    Taking a countable intersection over $(r,M,\epsilon)$-tuples leads to a Baire generic condition on $\a$.  

The denominators of the best rational approximations  $p_1/q_1,\ p_2/q_2,\ldots$ to $\alpha$ play a crucial role, see \ref{S:rational} for definitions.   
If $\alpha$ is sufficiently Liouvillean then $f^{q_n}$ will be close to the identity, see Theorem \ref{thm aflxz} below (see also \cite{A}, or \cite{B2} for the $C^0$-version).   
In particular if $\gamma$ is any embedded curve in the annulus that connects the two boundary circles, then 
$f^{q_{n+1}}(\gamma)$ will be close to $\gamma$.    Just how close, depends on the smallness of the fractional part $\{q_{n+1}\alpha\}$ of $q_{n+1}\alpha$ (depending also on $r, M$ and $\epsilon$), which in turn can be formulated in terms of a sufficiently large gap between $q_{n+2}$ and $q_{n+1}$.   
%
%
The first significant step in the construction is to find a curve $\gamma=\gamma_n\subset\A$ as above, so that the iterates 
\ary\label{curves}
\gamma,\ f(\gamma),\ldots, f^{q_{{n+1}}-1}(\gamma)
\eary 
are mutually disjoint, assuming large enough gaps in the sequence $(q_j)$ near to $q_n$.  This is achieved in Sections \ref{S:Brouwer} and \ref{S:renorm} using a renormalization procedure, see 
Theorem \ref{cor smooth return domain new version}.  
%
%

The (closure of each component of) the complement of the curves in \eqref{curves} defines a tiling of $\A$ by topological rectangles 
\ary\label{tiling 0}
	\cT_0,\ldots,\cT_{q_{{n+1}}-1}
\eary 
having mutually disjoint interiors.   
The curves in \eqref{curves} would be cyclically permuted by $f$ if $f^{q_{n+1}}(\gamma)$ coincided exactly with $\gamma$.   
This cannot happen, but $f^{q_{n+1}}(\gamma)$ is very close to $\gamma$ since $f^{q_{n+1}}$ is close to the identity.    
Our strategy therefore is to modify $f$, near to $f^{q_{n+1}-1}(\gamma)$, to obtain a new map $g$, so that the $g$-orbit of $\gamma$ closes up after $q_{n+1}$-iterates, 
see Figure \ref{F:f to g}.   
Then the tiling \eqref{tiling 0} is $g$-invariant, where $g$ is close to $f$.    Each element of the tiling will be a fundamental domain for $g$ and there is a well defined first return map $g^{q_{n+1}}:\cT_0\to\cT_0$, see \ref{S:decomposition}.  

Moreover, we can arrange that $g^{q_{n+1}}(x)$ is close to $f^{q_{n+1}}(x)$, for each $x\in\cT_0$, which in turn is close to $x$ because $f^{q_{n+1}}$ is close to the identity.   
Fine tuning the deformation from $f$ to $g$ we can arrange that    
\aryst
					g^{q_{{n+1}}}=\id, 
\earyst 
i.e.\ that $g$ is a periodic approximation to $f$, see sections \ref{S:g periodic} and \ref{S:periodic approx}.

The next goal is to replace $g$ by an approximation to $f$ that has the same (irrational) rotation number as $f$.   The idea is that since $g$ is $q_{n+1}$-periodic we can write 
$g=h^{-1}R_{p_{n+1}/q_{n+1}}h$ for some change of coordinates $h\in\diff^\infty(\A)$.    In \ref{S:h} we show this can be achieved with crucial bounds 
on $h$ in a $\diff^r(\A)$-sense, bounds which ultimately come from the renormalization argument in Section \ref{S:renorm}.   
From these bounds on $h$ it will follow that $g=h^{-1}R_{p_{n+1}/q_{n+1}}h$ is close to $h^{-1}R_{\alpha}h$ in $\diff^{r-1}(\A)$.  Here again our Liouville-type condition is important, because $|p_{n+1}/q_{n+1} -\alpha|$ decreases as $q_{n+2}$ increases, while $\|h\|_{\diff^r(\A)}$ remains bounded.   The upshot, see Proposition \ref{P: approx with rot a}, is that $$h^{-1}R_{\alpha}h$$ is an $\epsilon$-approximation to $g$, and hence also to $f$, in $\diff^{r-1}(\A)$, 
provided the rotation number $\a$ is such that $q_{n+2}$ is sufficiently larger than $q_{n+1}$, depending also on $r,M$ and $\epsilon$.    
In a final step, Proposition \ref{P: area preserving}, we modify $h$ to make $h^{-1}R_{\alpha}h$ area preserving, and therefore an element of $O^{\infty}_{\A}$ that $\epsilon$-approximates $f$ 
in $\diff^{r-2}(\A)$, as required.    
%
\subsection*{Acknowledgements}

Z.Z. would like to thank Artur Avila and Rapha\"el Krikorian for discussion on one occasion.
Z.Z. would also like to acknowledge the online talk by Rapha\"el Krikorian during the Workshop \lq\lq Between Dynamics and Spectral Theory \rq\rq at  the 
Simons Center for Geometry and Physics back in 2016, which inspired this article. 
This work was initiated in 2019 while the authors were at the Institute for Advanced Study both supported by the National Science Foundation under Grant No.\ DMS-1638352.  
We thank them for their hospitality and excellent working environment.  
B.B. was also partially supported by the SFB/TRR 191 `Symplectic Structures in Geometry, Algebra and Dynamics', funded by the DFG (B1 281071066 -- TRR 191).


\section{Preliminaries}\label{sec prelim}
\subsection{Best rational approximations}\label{S:rational}
Let us  recall a few well known facts about the best rational approximations to $\a\in (0,1)\setminus \Q$ and its continued fraction expansion.   Readers can consult \cite[Chapters X, XI]{HW} for more details.   

For $x\in\R$ we will write 
\aryst 
		\lfloor x\rfloor &:= &\max\{ n\in\Z\ |\ n\leq x\} \in \Z, \\
		q(x) &:= &\lfloor 1/x\rfloor \in \Z, \quad  x \neq 0.
\earyst
for the integer parts of $x$ and $1/x$ respectively, and denote the fractional part of $x$ by
\ary\label{E:frac}
				\{x\}:=x-\lfloor x\rfloor\in[0,1).
\eary
The Gauss map $\G: [0,1)\rightarrow[0,1)$ is defined by
\aryst
			\G(x)=\frac{1}{x} - q(x)   
\earyst
on $(0,1)$ and $\G(0):=0$.   If $x$ is irrational then so is $\G(x)$.
 
\begin{defi}
	For any $\a \in (0,1) \setminus \Q$, we define the sequences $(\a_n)_{n \geq 0}$ and $(\beta_n)_{n\geq0}$ in $(0,1) \setminus \Q$ by 
	\aryst
	\a_0 := \a, \qquad \a_n &:=& \cG^n(\a_0)\qquad \forall n \geq 1,\\
	\beta_n &:=&\prod_{i=0}^{n}\a_i \qquad \forall n \geq 0.  
	\earyst 
	Furthermore we define sequences of non-negative integers $(a_n)_{n \geq 0}$, $(q_n)_{n \geq 0}$  as follows: 
	\ary
	a_0 := 0, \qquad  \quad a_n &:=& q(\a_{n-1})\qquad \forall n \geq 1,  \label{eq a n 0}\\ 
	q_{0}:=1,\quad q_1:=q(\alpha), \quad  q_{n+2} &:=& q_n + q_{n+1} a_{n+2} \qquad \forall n \geq 1.  \label{q iteration}
	\eary
	We also define $(p_n)_{n \geq 0}$ by $p_0 = 0$, and for $n \geq 1$, define
	 $p_n$ to be the closest integer to $q_n\alpha$, which is unique by irrationality of $\alpha$.   Thus 
	 \ary\label{E:pn distance}
			\norm{q_n\a}_{\R / \Z}=d(q_n\a,\Z)=|q_n\a - p_n| \qquad \forall n\in\N.  
	\eary
	 It is easy to check that $q_1\a\in(1/2,1]$ always, so that $p_1=1$ always.     
	 
	 We will use the notation $\a_n(\a)$, $q_n(\a)$ and $p_n(\a)$ when it is necessary to indicate the dependence of the sequences on $\a$.  
\end{defi}
	Note that $\a_{n-1}^{-1} = a_n + \a_n$ for all $n\in\N$, 
	since $\a_n=\cG(\a_{n-1})=1/\a_{n-1}-q(\a_{n-1})=1/\a_{n-1}-a_n$,    
	which implies the following continued fraction expansions: 
	\aryst
			\a=
				\xcontfrac{;a_1,a_2,\cfracdots,a_n + \a_n}
			\qquad\qquad
			\frac{p_n}{q_n}=
				\xcontfrac{;a_1,a_2,\cfracdots,a_n}
	\earyst
	The following well known identities will be useful: 
	\ary
		p_{n+1}q_n-p_nq_{n+1}&=& (-1)^n \label{relation pn qn}\\
	(-1)^n(q_n \a - p_n)  &=&  \beta_n > 0 \label{sign}
	\eary
	for all $n\in\N$.   In particular for $n \geq 1$ we have $\beta_n=|q_n\a - p_n| =\|q_n\a\|_{\R/\Z}$.   
	For any $n \geq 1$,
	 the integers $p_n$ and $q_n$ are relatively prime, and     
	$p_n / q_n$ is called the $n$-th best rational approximation of $\a$.   The following estimates are also well known:
	\ary \label{eq continuefraction} 
	 \frac{1}{2q_{n+1}} < \frac{1}{q_n + q_{n+1}} < \b_n < \frac{1}{q_{n+1}}, \\
	\label{eq anupperlower}
	 \a_n,\  q(\a_n)^{-1} \in \left(\frac{q_n}{2q_{n+1}}, \frac{2q_n}{q_{n+1}}\right).
	\eary
	These lead to two further standard inequalities: 
\ary
						\G(\a_n)&<&\frac{2q_{n+1}}{q_{n+2}},\label{E:Gauss of alpha}
\eary
from $\G(\a_n)=\a_{n+1}$ and \eqref{eq anupperlower} 
\ary
						\G(\beta_{n-1}) &<& \frac{2q_{n-1}q_{n}}{q_{n+1}}.   \label{E:Gauss of beta}
\eary
To see \eqref{E:Gauss of beta}: \eqref{relation pn qn} implies 
$1/\beta_{n-1} - q_n = q_{n-1}\a_n < 2q_{n-1}q_{n}/q_{n+1}$.   
So if the latter ratio is less than $1$, then$\left\lfloor 1/\beta_{n-1}\right\rfloor = q_n$ 
giving $\G(\beta_{n-1})=1/\beta_{n-1}-q_n= q_{n-1}\a_n$, which implies \eqref{E:Gauss of beta}.
	Finally, we will also require: 
\begin{lemma}\label{lem construct rotation nbs}
		For any  function $P: \N \to \R_+$, the set 
		\aryst
		\ \ \cC_P := \big\{ \alpha \in (0,1)\setminus \Q &\mid& \exists n\geq 3 \mbox{ odd such that } q_{n} > P(q_{n-1}), \\
		&&q_{n+1} > P(q_{n}),\  \ q_{n+2} > P(q_{n+1})  \big\}
		\earyst 
		is open and dense in $(0,1)\setminus \Q$.
\end{lemma}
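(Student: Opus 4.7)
The plan is to prove openness and density separately, relying only on local-constancy of the continued fraction data and the recursion $q_{k+1}=q_{k-1}+q_k a_{k+1}$ from \eqref{q iteration}. The basic tool in both steps is the \emph{cylinder} $C_{b_1,\dots,b_N}:=\{\beta\in(0,1)\setminus\Q : a_k(\beta)=b_k \text{ for } 1\le k\le N\}$, which is an open interval of $(0,1)$ with diameter $1/(q_N(q_N+q_{N-1}))$, shrinking to $0$ as $N\to\infty$. On any such cylinder, the first $N$ partial quotients $a_k$ and the first $N$ convergents $p_k/q_k$ are constant.

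For openness: if $\alpha\in\cC_P$ with odd witness $n\ge 3$, the quantities $q_{n-1},q_n,q_{n+1},q_{n+2}$ depend only on $a_1(\alpha),\dots,a_{n+2}(\alpha)$, so the cylinder $C_{a_1(\alpha),\dots,a_{n+2}(\alpha)}$ is an open neighborhood of $\alpha$ in $(0,1)\setminus\Q$ on which the three defining inequalities of $\cC_P$ continue to hold with the same witness $n$.

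For density: given a nonempty open set $V\subset(0,1)\setminus\Q$ and $\alpha\in V$, I first choose an even integer $N\ge 2$ large enough that $C_{a_1(\alpha),\dots,a_N(\alpha)}\cap((0,1)\setminus\Q)\subset V$; this is possible since the cylinder diameter tends to $0$. Set $n:=N+1$, which is odd and $\ge 3$. I then construct $\alpha'\in V$ by prescribing its continued fraction expansion as $\alpha'=[a_1(\alpha),\dots,a_N(\alpha),A_1,A_2,A_3,1,1,1,\dots]$, where positive integers $A_1,A_2,A_3$ are chosen in turn. Since $\alpha'$ shares its first $N$ partial quotients with $\alpha$, automatically $\alpha'\in V$ and $q_{n-1}(\alpha')=q_N(\alpha)$. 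Using \eqref{q iteration}, I first pick $A_1=a_n(\alpha')$ so large that $q_n(\alpha')=q_{n-2}(\alpha')+q_{n-1}(\alpha')A_1>P(q_{n-1}(\alpha'))$; this fixes $q_n(\alpha')$. Next I pick $A_2$ so large that $q_{n+1}(\alpha')=q_{n-1}(\alpha')+q_n(\alpha')A_2>P(q_n(\alpha'))$; this fixes $q_{n+1}(\alpha')$. Finally I pick $A_3$ so large that $q_{n+2}(\alpha')>P(q_{n+1}(\alpha'))$. Then $\alpha'\in\cC_P\cap V$, establishing density.

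There is no serious obstacle: the statement is purely combinatorial about continued fractions, and the only subtlety is the parity constraint on $n$, which is handled at the outset by taking $N$ even.
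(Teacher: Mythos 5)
Your proof is correct and follows essentially the same route as the paper's (which is only an outline appealing to ``standard observations''): openness via local constancy of the partial quotients on continued-fraction cylinders, density via freedom in prescribing the tail of the expansion. The only organizational difference is that you choose the odd witness $n$ adaptively to fit a cylinder inside the given open set, rather than proving a quantitative $\mu^n$-density for each fixed $\cC_P(n)$ as the paper suggests; both work, and your version supplies the details the paper omits.
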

\begin{proof} This follows from standard observations in the theory of rational approximations, so we merely outline the idea.    
Indeed, $\cC_P$ is the union over $n\in\N_{\mathrm{odd}}$ of the sets 
\aryst
 \cC_P(n) &=  & \Big\{ \alpha \in (0,1)\setminus \Q \Big| \ q_{n} > P(q_{n-1}),\ \ q_{n+1} > P(q_{n}),\ \ q_{n+2} > P(q_{n+1})  \Big\}.
\earyst 
Each $\cC_P(n)$ is open in $(0,1)\setminus \Q$ because each function $x\mapsto q_n(x)$ is continuous at irrational $x$.    
One can also show that each $\cC_P(n)$ is $\mu^n$-dense in $[0,1]$ for some $\mu\in(0,1)$.    
Hence $\cC_P$ is open and dense in $(0,1)\setminus \Q$.   
\end{proof}
 
\subsection{The annulus and lifts}\label{SS:lifts}
The universal covering of $\A:=\R/\Z\times[0,1]$ is $\tilde \A = \R \times [0,1]$.   We write  
 $$\pi : \tilde \A \to \A$$ 
for the natural projection, and $T: \tilde \A \to \tilde \A$ for the deck transformation $T(x,y) = (x + 1, y)$.
Recall that every homeomorphism $f:\A\to\A$ has a lift 
$F:\tilde\A\rightarrow\tilde\A$, also a homeomorphism, unique up to composition by a power of $T$, satisfying $\pi F = f \pi$.   
Any lift commutes with $T$.  

Let $\omega=dx\wedge dy$ be the standard smooth area form on $\A$. 
By a slight abuse of notation, we also denote by $\omega$ the lift to an area form on $\tilde \A$.
If $f$ is an $\omega$-preserving diffeomorphism then so is each lift $F$ an $\omega$-preserving diffeomorphism.    

The boundary components of $\A$ and $\tilde\A$ will be denoted $B_i:=\R / \Z \times \{i\}$, respectively $\tilde B_i :=\R  \times \{i\}$, for $i=0,1$. 
For  $\Omega\subset\A$, $\Int(\Omega)$ denotes the interior \textit{with respect to the subset topology} of $\A$.   Similarly for subsets of $\tilde\A$.

\subsection{The $C^0$-topology.}\label{SS:C0}
We equip $\tilde\A= \R \times [0,1]$ with the Euclidean metric as a subset of $\R^2$    
and give the annulus $\A$ the unique metric $d_{\A}$ that makes the projection $\pi:\tilde\A\to\A$ a local isometry.    Explicitely, if we set 
\ary\label{E:circle norm}
			\norm{x}_{\R / \Z} := d(x, \Z)\in [0,1/2]\qquad\forall x\in\R,  
\eary
then $d_{\A}$ is the product of the induced metric on $\R / \Z$ with the Euclidean one on $[0,1]$.    (I.e.\ $d_{\A}$ 
is the distance metric induced by the Riemannian metric on $\A$ from the Euclidean scalar product on $\tilde\A\subset\R^2$ via $\pi$.) 
Then set
\aryst
						d_{C^0(\Omega)}(f,g)\ :=\ \sup_{x\in\Omega}d_{\A}\big(f(x),g(x)\big) 
\earyst
is the usual $C^0$-distance for maps $f,g: \Omega \to \A$, from arbitrary $\Omega$, and 
\aryst	
		\norm{F} :=\norm{F}_{\Omega} := \sup_{x \in \Omega}|F(x)|
\earyst 
where $F=(F_1,F_2): \Omega \to \tilde\A$ and $|\cdot |$ is the Euclidean norm on $\R^2$.   
If $\Omega=\A$ and $f,g:\A\to\A$ are homeomorphisms, and $F, G$ are any lifts of $f, g$ respectively, then one finds $d_{\A}\big(f(x),g(x)\big)\ =\ \min_{j\in\Z}|T^jF(\tilde x)-G(\tilde x)|$ 
for each $x\in\A$, where $\pi(\tilde x)=x$.   Hence 
\ary\label{E:A metric 2}
						d_{C^0(\A)}(f,g)\ \leq\ \|F-G\|_{[0,1]^2}.  
\eary
\begin{lemma}\label{lem:lift}
	If $f:\A\to\A$ is a homeomorphism with $d_{C^0}(f,\Id_{\A})<1/2$ then there is a unique lift $F:\tilde\A\to\tilde\A$ satisfying 
	$\norm{F - {\rm Id}_{\tilde\A}}_{\tilde\A}<1/2$.   This lift satisfies 
	\ary\label{lift}
	|F(\tilde x) - \tilde x | \ =\ d_{\A}\big(f(x),\,x\big)  
	\eary
	for all $x\in\A$ and all $\tilde x\in\tilde\A$ with $\pi(\tilde x)=x$.   Hence $\norm{F - {\rm Id}_{\tilde\A}}_{\tilde\A}=d_{C^0}(f,\Id_{\A})$.  
	This lift $F$ is also characterized by $\rho(F)\in(-1/2,1/2)$, see \eqref{def rotation number F}.  
\end{lemma}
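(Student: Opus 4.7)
My plan is to start from an arbitrary lift $F_0:\tilde\A\to\tilde\A$ of $f$ and correct it by a suitable deck translation. For each $\tilde x\in\tilde\A$ with $\pi(\tilde x)=x$, the identity $d_\A(f(x),x) = \min_{j\in\Z} |T^j F_0(\tilde x) - \tilde x|$ noted just above \eqref{E:A metric 2} yields a minimum value strictly less than $1/2$ by hypothesis. Since two distinct integer translates $T^j F_0(\tilde x)$ and $T^{j'} F_0(\tilde x)$ differ by $|j-j'|\geq 1$ in the first coordinate, the minimizer $j(\tilde x)$ is the \emph{unique} integer $j$ with $|T^j F_0(\tilde x) - \tilde x|<1/2$. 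Continuity of $F_0$ together with integer-valuedness forces $j(\tilde x)$ to be locally constant on the connected space $\tilde\A$, hence globally equal to a single integer $j_0$, and $F:=T^{j_0}F_0$ will be the desired lift.

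For uniqueness, I observe that any other lift of $f$ has the form $T^k F$ for some $k\in\Z$; if it too satisfies $\|T^k F - {\rm Id}_{\tilde\A}\|_{\tilde\A}<1/2$, the triangle inequality gives $|k| = |T^k F(\tilde x) - F(\tilde x)| < 1$, forcing $k=0$. The identity \eqref{lift} then follows from the same $1$-separation idea: by the formula preceding \eqref{E:A metric 2}, $d_\A(f(x),x)=\min_{j\in\Z}|T^j F(\tilde x) - \tilde x|$, and no $j\neq 0$ can yield a value below $1/2$, so the minimum is attained at $j=0$ and equals $|F(\tilde x)-\tilde x|$, whence also $\|F-{\rm Id}_{\tilde\A}\|_{\tilde\A}=d_{C^0}(f,\Id_\A)$.

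For the rotation-number characterization, I will exploit that $\tilde x\mapsto F(\tilde x)-\tilde x$ is $T$-invariant, so that the supremum defining $\|F-{\rm Id}\|$ is attained over a compact fundamental domain; the strict bound $\|F-{\rm Id}\|<1/2$ therefore upgrades to $\|F-{\rm Id}\|\leq 1/2-\delta$ for some $\delta>0$. Telescoping via $F^n(\tilde x)-\tilde x=\sum_{i=0}^{n-1}(F(F^i\tilde x)-F^i\tilde x)$ yields $|F^n(\tilde x)_1-\tilde x_1|\leq n(1/2-\delta)$, so the first-coordinate rotation number \eqref{def rotation number F} of $F$ lies in $[-1/2+\delta,1/2-\delta]\subset(-1/2,1/2)$. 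Conversely $\rho(T^k F)=k+\rho(F)$ by definition, so for any other lift $T^k F$ the condition $\rho(T^kF)\in(-1/2,1/2)$ forces $|k|<1$, i.e.\ $k=0$. The argument is entirely routine; the only step worth flagging is the unique-minimizer observation that promotes the pointwise choice $j(\tilde x)$ to the single global integer $j_0$, and even this is immediate from the $1$-separation of the deck translates in the first coordinate.
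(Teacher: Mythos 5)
Your proof is correct, and it is a careful elaboration of exactly the idea the paper compresses into its one-sentence proof (``$\pi$ is an isometry on balls of diameter $<1/2$''): the $1$-separation of the deck translates $T^jF_0(\tilde x)$ in the first coordinate is precisely that fact, and your local-constancy argument for the minimizing index, the triangle-inequality uniqueness, and the telescoping bound $|p_1(F^n\tilde x)-p_1(\tilde x)|\leq n\norm{F-\Id}$ for the rotation-number characterization are all the standard steps the authors leave implicit. No gaps.
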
 
The proof just uses that $\pi$ is an isometry on balls of diameter $<1/2$.    
%
%
%

\subsection{$C^r$ measurements.}\label{SS:Cr}
Let $r\in\N$ and $U\subset\R^2$ be open, or $U=\tilde\A$ or $U=\A$.  For any $C^r$-smooth map $F: U \to \tilde\A$ define 
\aryst\label{D:Cr }
 \norm{D^rF}_{U} := \max_{|\a|= r}\norm{\partial^\a F}_{U},\quad \norm{F}_{C^r(U)} := \max_{1\leq |\a|\leq r}\norm{\partial^\a F}_{U},
\earyst
\aryst
	\norm{F}_{\diff^r(U)} := \max\Big\{ \norm{F}_{C^r(U)},\ \norm{F^{-1}}_{C^r(V)}\Big\},
\earyst
where the latter is for a $C^r$-diffeomorphism $F:U\to V$ between 
open subsets of $\A$, $\tilde\A$ or of $\R^2$.    Despite notation, the above are clearly not norms.

If the target is $\A$ then we define $C^r$-measurements locally, since $\A$ is not isometric to a subspace of the linear space $\R^2$.    
Indeed, if $f: U \to \A$ is $C^r$-smooth from $U\subset \A$ open, then for each $z\in U$ and each multi-index $\a$ with $1\leq |\a|\leq r$, we identify 
$\partial^\alpha f(x)\ :=\ \partial^\alpha F(\tilde x)\in\R^2$ 
where $F:\mathcal{O}\subset\tilde\A\to\R^2$ is any ``local lift'' of $f$ to an open neighborhood of a point $\tilde x\in\pi^{-1}(x)$, meaning $F:=\hat{\pi}^{-1}\circ f\circ\hat{\pi}$ where $\hat{\pi}$ is the restriction of $\pi$ to $\mathcal{O}\subset\tilde\A$, and the latter is sufficiently small that 
$\hat{\pi}$ is an isometry onto its image.   This is independent of the choices since $T$ is an isometry.   

Now for each $C^r$-smooth map $f: U \to \A$ from an open subset $U\subset\A$, we define 
the following quantities which, again despite the notation, are clearly not norms; 
\aryst
				\|D^rf\|_{U} := \max_{|\a|=r}\sup_{x\in U}|\partial^\a f(x)|,   \qquad \|f\|_{C^r(U)} := \max_{1 \leq j \leq r}\|D^jf\|_{U}. 
\earyst 
In the special case $U=\A$, so $f$ has a global lift $F:\tilde\A\to\tilde\A$, one shows 
\aryst
			 \|D^rf\|_{\A}&=&\|D^rF\|_{[0,1]^2}=\|D^rF\|_{\tilde\A}\\  
			\|f\|_{C^r(\A)}&=&\|F\|_{C^r([0,1]^2)}=\|F\|_{C^r(\tilde\A)}.  
\earyst
If $f:U\to V$ is a diffeomorphism between open subsets of $\A$ 
then we set 
\aryst
				\|f\|_{\diff^r(U)} :=  \max\Big\{ \|f\|_{C^r(U)},\ \|f^{-1}\|_{C^r(V)}\Big\}.
\earyst 
Finally   we define a $C^r$-measurement that includes the zeroth order term, and distinguish this by using the notation for a metric rather than a norm:
%

\begin{defi}

For each $r\in\N$ we define the following metric on $\diff^r(\A)$: 
\aryst\label{D:diffr metric}
		 &d_{\diff^r(\A)}(f,g) &:=  \max\Big\{d_{C^0(\A)}(f,g),\ d_{C^0(\A)}(f^{-1},g^{-1}),\nonumber \\ 
		 &				&\hspace{70pt} \norm{F-G}_{C^r(\tilde\A)},\ \norm{F^{-1}-G^{-1}}_{C^r(\tilde\A)}\Big\} 
\earyst
for all $f,g\in\diff^r(\A)$, where $F,G:\tilde\A\to\tilde\A$ are any lifts of $f, g$ respectively.  
\end{defi}
When there is no confusion, we further abbreviate $\norm{f}_{C^r(U)}$, $\norm{f}_{\diff^r(U)}$, $d_{\diff^r(U)}$, by $\norm{f}_{C^r}$, $\norm{f}_{\diff^r}$ 
and $d_{\diff^r}$ respectively.   

Occasionaly we use the H\"older spaces $C^{r,\theta}$ on $\A$ and $\tilde\A$ and on open subsets thereof, 
where $r\in\N$ and $\theta\in(0,1)$.    These can be defined analogously.  
 
A sequence of $C^\infty$-smooth diffeomorphisms on $\A$ is said to be convergent in $\diff^\infty(\A)$ if it converges with respect to $d_{\diff^r(\A)}$ for each $r\in\N$. 
 
\subsection{Estimates for compositions}\label{SS:compositions}
We will often implicitely use that $\|f\circ g\|_{C^r}$ is bounded by a function of $\|f\|_{C^r}$ and $\|g\|_{C^r}$, and hence 
$\|f\circ g\|_{\diff^r}$ by a function of $\|f\|_{\diff^r}$ and $\|g\|_{\diff^r}$.    
To get smallness of the composition $f\circ g$ in terms of smallness of $f$ or $g$, one needs slightly more:    

\begin{lemma}\label{L:composition}
Let $r_1,r_2\in\N_0$ and $\theta_1,\theta_2\in[0,1)$ and $M>0$, be given, so that $r_1+\theta_1<r_2+\theta_2$.    
Then for each $\epsilon>0$ there exists $\delta>0$, depending on $(r_1+\theta_1,r_2+\theta_2,M,\epsilon)$, 
so that if $d_{\diff^{r_2,\theta_2}(\A)}(\varphi,\id)\leq M$ and $d_{\diff^{r_1,\theta_1}(\A)}(\psi,\id)\leq \delta$, then 
\aryst
						d_{\diff^{r_1,\theta_1}(\A)}(\varphi\circ\psi,\varphi)<\epsilon.   
\earyst
Similarly $d_{\diff^{r_1,\theta_1}(\A)}(\psi\circ\varphi,\varphi)<\epsilon$. 
\end{lemma}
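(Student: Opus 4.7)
The plan is to reduce to the universal cover $\tilde\A$ via lifts $\Phi,\Psi$ of $\varphi,\psi$ and show that each of the four quantities in the definition of $d_{\diff^{r_1,\theta_1}}$ is small. Writing $h := \Psi - \id$ with $\|h\|_{C^{r_1,\theta_1}(\tilde\A)} \leq \delta$, the key estimate is
\[
\|\Phi\circ\Psi - \Phi\|_{C^{r_1,\theta_1}(\tilde\A)} \leq C(M)\, \delta^{\alpha}
\]
for some $\alpha = \alpha(r_1+\theta_1, r_2+\theta_2) > 0$, which together with the analogous estimate for the inverse (guaranteed by the hypothesis $d_{\diff^{r_1,\theta_1}}(\psi,\id)\le\delta$) and the inequality $d_{C^0(\A)}(\cdot,\cdot) \leq \|\cdot - \cdot\|_{C^0(\tilde\A)}$ from \eqref{E:A metric 2} yields the result.

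To control the $C^0$-norms $\|D^k(\Phi\circ\Psi) - D^k\Phi\|_{C^0}$ for $0 \leq k \leq r_1$, I would expand $D^k(\Phi\circ\Psi)$ by Fa\`a di Bruno and isolate the term from the singleton partition, $D^k\Phi(\Psi)\cdot(D\Psi)^{\otimes k}$. Writing
\[
D^k\Phi(\Psi)(D\Psi)^{\otimes k} - D^k\Phi = D^k\Phi(\Psi)\bigl[(D\Psi)^{\otimes k}-I^{\otimes k}\bigr] + \bigl[D^k\Phi(\Psi) - D^k\Phi\bigr],
\]
the first bracket has $C^0$-norm $O(\delta)$ since $\|D\Psi-I\|_{C^0}\leq\delta$, and the second is bounded either by $\|D^{k+1}\Phi\|_{C^0}\cdot\delta \leq M\delta$ via the mean value theorem when $r_2 \geq k+1$, or by $[D^{r_1}\Phi]_{C^{0,\theta_2}}\|h\|_{C^0}^{\theta_2} \leq M\delta^{\theta_2}$ via $\theta_2$-H\"older continuity in the residual case $r_2 = k = r_1$. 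All other Fa\`a di Bruno terms contain a factor $D^j\Psi$ with $j\geq 2$, whose $C^0$-norm is bounded by $\|h\|_{C^j} \leq \delta$, so they are also $O(\delta)$.

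The delicate H\"older seminorm $[D^{r_1}(\Phi\circ\Psi) - D^{r_1}\Phi]_{C^{0,\theta_1}}$ is the main obstacle, specifically in the regime $r_2 = r_1$, $\theta_2 > \theta_1$, where no spare classical derivative of $\Phi$ is available. Here I would bound the critical piece $D^{r_1}\Phi\circ\Psi - D^{r_1}\Phi$ in the stronger seminorm $[\cdot]_{C^{0,\theta_2}}$ (which is $O(M)$, since $D^{r_1}\Phi$ is $\theta_2$-H\"older and $\Psi$ is Lipschitz close to $\id$), then interpolate with the $C^0$-smallness via the Hadamard convexity inequality
\[
[v]_{C^{0,\theta_1}} \;\leq\; C\, \|v\|_{C^0}^{\,1-\theta_1/\theta_2}\, [v]_{C^{0,\theta_2}}^{\,\theta_1/\theta_2},
\]
which crucially uses the strict inequality $\theta_2 > \theta_1$. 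The remaining Fa\`a di Bruno terms' H\"older seminorms are handled by the Leibniz rule for H\"older norms, each such term containing a factor $D^j(\Psi - \id)$ for some $j\geq 1$ whose $C^{0,\theta_1}$-seminorm is controlled by $\|h\|_{C^{r_1,\theta_1}} \leq \delta$.

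Finally, the estimate for $\psi\circ\varphi - \varphi = h\circ\varphi$ follows directly from Fa\`a di Bruno as $\|h\circ\varphi\|_{C^{r_1,\theta_1}} \leq P(\|\varphi\|_{C^{r_1,\theta_1}})\,\|h\|_{C^{r_1,\theta_1}} \leq P(CM)\,\delta$ for a polynomial $P$. The inverse estimate uses the identity $(\Phi\circ\Psi)^{-1} - \Phi^{-1} = (\Psi^{-1} - \id)\circ\Phi^{-1}$ together with $\|\Psi^{-1} - \id\|_{C^{r_1,\theta_1}} \leq \delta$, which is built into the hypothesis $d_{\diff^{r_1,\theta_1}}(\psi,\id)\le\delta$, reducing it to the same composition estimate. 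Choosing $\delta$ small depending on $(r_1+\theta_1, r_2+\theta_2, M, \epsilon)$ enforces all four $\epsilon$-bounds and completes the proof; as anticipated, the only nontrivial step is the H\"older interpolation in the critical case $r_2 = r_1$, $\theta_2 > \theta_1$.
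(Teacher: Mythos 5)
Your argument is correct, but it takes a genuinely different route from the one the paper indicates. The paper disposes of this lemma with a soft argument: the ball $\{d_{\diff^{r_2,\theta_2}}(\varphi,\id)\leq M\}$ is precompact in the $\diff^{r_1,\theta_1}$-topology (compactness of the embedding $C^{r_2,\theta_2}\hookrightarrow C^{r_1,\theta_1}$), and on such a compact set the map $\psi\mapsto\varphi\circ\psi$ is continuous at $\psi=\id$ uniformly in $\varphi$; one then concludes by a contradiction/subsequence argument. You instead give the quantitative, hard-analysis version: Fa\`a di Bruno, isolation of the top-order term, and the interpolation $[v]_{C^{0,\theta_1}}\leq C\,\|v\|_{C^0}^{1-\theta_1/\theta_2}[v]_{C^{0,\theta_2}}^{\theta_1/\theta_2}$ to handle the critical piece $D^{r_1}\Phi\circ\Psi-D^{r_1}\Phi$ when $r_1=r_2$ and $\theta_1<\theta_2$. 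You have correctly identified where the strict inequality $r_1+\theta_1<r_2+\theta_2$ enters (it is exactly what makes the embedding compact in the paper's version, and what makes your interpolation exponent positive), and you correctly observe that naive continuity of composition fails in H\"older norms, which is why the spare regularity of $\varphi$ is indispensable. Your treatment of the inverses via $(\Phi\circ\Psi)^{-1}-\Phi^{-1}=(\Psi^{-1}-\id)\circ\Phi^{-1}$ and of the easy direction $\psi\circ\varphi-\varphi=h\circ\varphi$ is also sound, using that the paper's metric $d_{\diff^{r,\theta}}$ already controls $\Psi^{-1}-\id$. What your approach buys is an explicit rate $\delta^{\alpha}$ with $\alpha$ depending only on $r_1+\theta_1$ and $r_2+\theta_2$, which the compactness argument cannot provide; what it costs is the bookkeeping of the Fa\`a di Bruno terms, which you should carry out uniformly over the case distinction $r_2>r_1$ versus $r_2=r_1,\ \theta_2>\theta_1$ (in the former case the same interpolation with $\theta_2$ replaced by $1$ does the job).
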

The proof is a standard argument using mainly that after the compact inclusion $C^{r_2,\theta_2}\hookrightarrow C^{r_1,\theta_1}$ the 
composition map becomes uniformly continuous on bounded subsets.  
%

\subsection{Brouwer curves and admissible coordinates}
We say that $\gamma\subset\A$ is a {\it simple curve} connecting $B_0$ and $B_1$ if $\gamma = \phi([0,1])$ for some continuous injective map $\phi : [0,1] \to \A$ that maps $0$, resp. $1$, into $B_0$, resp. $B_1$, and that maps $(0,1)$ into $\A\backslash\partial\A$.  	We define simple curves in $\tilde\A$ connecting $\tilde B_0$ and $\tilde B_1$ in a similar way.

\begin{defi}\label{ordering on curves}
	For a pair of simple curves $\gamma_1,\gamma_2$ in $\tilde\A$ we will say that \textit{$\gamma_1$ is to the left of $\gamma_2$}, or \textit{$\gamma_2$ is to the right of $\gamma_1$}, and write 
	\aryst
					\gamma_1 < \gamma_2 
	\earyst
	if $\gamma_1\cap\gamma_2=\emptyset$ and $\gamma_1$ lies in the component of $\tilde\A\backslash\gamma_2$ containing points with arbitrarily 
	negative $\R$-coordinate.   This defines a partial ordering on simple curves in $\tilde\A$.  
\end{defi}

\begin{defi}[Brouwer curves]\label{def:Brouwer}
	Let $f$ be a homeomorphism of $\A$.
A simple curve $\gamma$ in $\A$ connecting $B_0$ and $B_1$, is called a \textit{Brouwer curve} for $f$ if 
$$\gamma\cap f(\gamma)=\emptyset.$$ 
A Brouwer curve $\gamma$ is called \textit{smooth} if $\gamma = \phi([0,1])$ where $\phi$ is 
$C^\infty$-smooth and regular, meaning that the derivative is nowhere vanishing.   Brouwer curves for homeomorphisms of $\tilde\A$ are defined similarly.		

\end{defi}
Note that if $F$ is a lift of a homeomorphism $f:\A\to\A$ and $\tilde\gamma$ is a lift of a Brouwer curve $\gamma\subset\A$ for $f$, then 
\aryst
T^{k}F(\tilde \gamma) \cap \tilde\gamma = \emptyset \qquad \forall k \in \Z.  
\earyst
\begin{defi}[$Q$-good Brouwer curves]\label{D:good Brouwer}
	Let $f$ be a homeomorphism of $\A$ and $Q\geq 2$ an integer.  A Brouwer curve $\gamma$ will be called \textit{$Q$-good} if $\gamma$ is also a Brouwer curve for each of the maps $f,f^2,\cdots,f^{Q-1}$, equivalently if $\gamma,f(\gamma),\ldots,f^{Q-1}(\gamma)$ are pair-wise disjoint.
\end{defi}

\begin{defi}
	Let $\gamma_1, \gamma_2$ be two disjoint simple curves in $\A$ connecting $B_0$ and $B_1$.    There is a unique closed region $\cR$ 
	in $\A$ with left side $\gamma_1$ and right side $\gamma_2$.   More precisely, if each $\gamma_i$ is oriented from $B_0$ to $B_1$, then $\partial\cR\cap (\A\backslash\partial\A)$ has orientation agreeing with $\gamma_2 - \gamma_1$.  We say that \textit{$\cR$ is the region bounded by $(\gamma_1, \gamma_2)$}.
\end{defi}

\begin{defi}[Admissible coordinates]\label{def ad coord} 
	Let $\gamma$ be a Brouwer curve for $f$, and let $\cR \subset \A$ be the  closed region  bounded by $(\gamma, f(\gamma))$. We say that an orientation  	preserving $C^\infty$ -diffeomorphism 
	$$\phi : \cU \to \cR'$$ 
	from  an open neighborhood $\cU\subset \tilde{\A}$ of $[0,1]^2$ to an open neighborhood $\cR'\subset\A$ of $\cR$ is 
	an \textit{admissible coordinate} for $(\cR, f)$ if the following hold: 
	\enmt 
	\item $\phi$ has constant Jacobian,
	\item $\phi$ satisfies 
	\aryst 
	\phi(\{0\} \times [0,1]) &=& \gamma, \label{item 1} \\
	\phi(\{1\} \times [0,1]) &=& f(\gamma), \\
	\phi([0,1] \times \{0\}) &\subset& B_0, \\
	\phi([0,1] \times \{1\}) &\subset& B_1, \label{item 4}
	\earyst 
	\item there is a neighborhood $\cU_L$ of $\{0\} \times [0,1]$ in $\cU$, so that
	\aryst
	f \phi(x) = \phi T(x)  \qquad \forall x \in \cU_L.
	\earyst
	\eenmt
	Without loss of generality $T(\cU_L) \subset \cU$ by choosing $\cU_L$ sufficiently small.
\end{defi}
We make a similar definition for lifts:

\begin{defi}\label{def ad coord lift} 
Let $\tilde \gamma$ be a lift of a Brouwer curve $\gamma$ for $f$, 
let $F$ be a lift of $f$ such that 
$F(\tilde \gamma)$ is on the right of $\tilde \gamma$,
and let $\tilde \cR$ be the region bounded by $(\tilde \gamma, F(\tilde \gamma))$. \clb We say that a $C^\infty$-diffeomorphism 
$$\phi: \cU \to \cR'$$ 
from an open neighborhood $\cU$ of $[0,1]^2$ in $\tilde\A$ to an open neighborhood $\cR'$ of $\tilde \cR$ in $\tilde\A$ is an \textit{admissible coordinate for $(\tilde \cR, F)$} if $\phi$ satisfies the analogous properties to items 1-3 in Definition \ref{def ad coord}, with $(\cR, \A, f)$ replaced by $(\tilde \cR, \tilde \A, F)$.
\end{defi}
\begin{rema} \label{rema unique continuation}
	By item $(3)$ in Definition \ref{def ad coord}, an admissible coordinate is determined by its restriction to $[0,1]^2$ in the following sense: 
	If two admissible coordinates for $(\cR, f)$ agree on $[0,1]^2$ then they agree on some open neighborhood of $[0,1]^2$ in $\tilde{\A}$.   
	For this reason, we will often identify an admissible coordinate for $(\cR, f)$ with a map  from $[0,1]^2$ to $\cR$.  Analogously for lifts.    
\end{rema}

%
%
%
By the next Lemma admissible coordinates always exist. 
	 \begin{lemma}\label{A:admissible-coordinates-modified}
	 Let $r \geq 1$ and $\theta \in (0,1)$.    Suppose $g\in\diff^{r+1,\theta}(\A, \omega)$ has a smooth Brouwer curve $\gamma$, 
	 parameterised by arclength, that meets the boundary of $\A$ orthogonally near both end points.   Then the region $\cR$ bounded by   
	 $(\gamma,\,g(\gamma))$ has admissible coordinates $\phi:[0,1]^2\to\cR$ so that 
	 \aryst
	 \|\phi\|_{\diff^{r,\theta}([0,1]^2)}\leq D_r(\|g\|_{\diff^{r+1,\theta}(\A)},\|\gamma\|_{C^{r+3,\theta}},  \omega(\cR)^{-1}), 
	 \earyst
	 where $D_r$ is some continuous increasing function, independent of $g$ and $\gamma$, and $\omega(\cR)$ is 
	 the $\omega$-area of $\cR$. 
	\end{lemma}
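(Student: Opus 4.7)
The plan is to build $\phi$ in three stages: first construct a tubular collar of $\gamma$ with constant Jacobian; propagate it by $g$ to obtain coordinates near $g(\gamma)$; then interpolate smoothly across the bulk of $\cR$ and correct the Jacobian globally by a Moser trick that acts trivially on the two collars, so as to preserve the $g$-equivariance of item (3) in Definition \ref{def ad coord}.

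Since $\gamma$ is arclength-parameterised, rescale to $\gamma:[0,1]\to\A$ with $|\gamma'|\equiv L_\gamma$, and let $n(y)$ denote the inward unit normal along $\gamma$; this $n$ has $C^{r+2,\theta}$-bounds in terms of $\|\gamma\|_{C^{r+3,\theta}}$. For $\epsilon>0$ sufficiently small (controlled by the curvature of $\gamma$), the normal flow $\Psi_L(x,y):=\gamma(y)+xn(y)$, interpreted in local Euclidean coordinates on $\A$, is a $C^{r+2,\theta}$-embedding of $(-\epsilon,\epsilon)\times[0,1]$ into $\A$; the orthogonality of $\gamma$ with $\partial\A$ at its endpoints ensures $\Psi_L$ respects $B_0$ and $B_1$. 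A one-dimensional Moser correction in the $x$-variable flattens the Jacobian, producing $\Phi_L$ with constant Jacobian on a slightly smaller strip, still $C^{r+1,\theta}$-bounded. Define $\Phi_R(x,y):=g(\Phi_L(x-1,y))$ on $(1-\epsilon,1+\epsilon)\times[0,1]$; since $g\in\diff^{r+1,\theta}(\A,\omega)$ is area-preserving, $\Phi_R$ is a $C^{r+1,\theta}$-embedding with the same constant Jacobian and sends $\{1\}\times[0,1]$ onto $g(\gamma)$.

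Because $\gamma$ is a Brouwer curve, the images of $\Phi_L$ and $\Phi_R$ are disjoint subregions of $\cR$. A standard collar-and-partition-of-unity construction extends them to a $C^{r+1,\theta}$-diffeomorphism $\phi_0$ from a neighborhood of $[0,1]^2$ onto a neighborhood of $\cR$, equal to $\Phi_L$ on $[-\epsilon/2,\epsilon/2]\times[0,1]$ and to $\Phi_R$ on $[1-\epsilon/2,1+\epsilon/2]\times[0,1]$, and mapping the top and bottom edges of $[0,1]^2$ into $B_1$ and $B_0$ respectively. The Brouwer property and the quantitative lower bound $\omega(\cR)$ on the available area guarantee that such an extension exists with $C^{r+1,\theta}$-bounds depending on the quantities listed. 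By construction, $\phi_0$ already satisfies the $g$-equivariance of item (3) near the left edge. Finally, apply the Dacorogna-Moser theorem to $\phi_0^*\omega$ on $[0,1]^2$ with the constraint that the correcting diffeomorphism $\psi$ be the identity on both vertical collars; this is feasible since $\phi_0^*\omega$ already coincides with $\omega(\cR)\,dx\wedge dy$ there, so no mass need be redistributed. One obtains $\psi\in\diff^{r,\theta}([0,1]^2)$ with $\psi^*(\omega(\cR)\,dx\wedge dy)=\phi_0^*\omega$ and bounds depending on $\|\phi_0\|_{C^{r+1,\theta}}$ and $\omega(\cR)^{-1}$. Set $\phi:=\phi_0\circ\psi$.

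The resulting $\phi$ has constant Jacobian $\omega(\cR)$, maps the four sides correctly, and inherits the $g$-equivariance because $\psi$ is the identity on the vertical collars. The $\diff^{r,\theta}([0,1]^2)$-bound follows by composing the $C^{r,\theta}$-bounds on $\phi_0$ and $\psi$, while the $\omega(\cR)^{-1}$ factor controls $\|D\phi^{-1}\|$ via $\det D\phi=\omega(\cR)$. The principal obstacle is reconciling constant Jacobian with the $g$-equivariance: both are rigid, and they coexist only if the Moser correction is localized away from the vertical boundaries. This dictates the order of the construction---constant Jacobian must be arranged locally near $\gamma$ via the one-dimensional Moser step \emph{before} propagating by $g$, so that the subsequent global Moser step is idle on the collars and leaves the equivariance undisturbed.
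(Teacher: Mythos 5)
Your overall strategy is the same as the paper's: build constant-Jacobian coordinates on a collar of $\gamma$ (the one-dimensional Moser correction in $x$ is a legitimate variant of the paper's use of the Moser argument from \cite{MS}, with the same one-derivative loss), push forward by $g$ to a collar of $g(\gamma)$, extend across the bulk, and then correct the Jacobian globally by a deformation that is idle on the two vertical collars so as to preserve item (3) of Definition \ref{def ad coord}. The gap is in the final step. The assertion that the constrained Dacorogna--Moser problem is ``feasible since $\phi_0^*\omega$ already coincides with $\omega(\cR)\,dx\wedge dy$ there, so no mass need be redistributed'' is not correct as stated: the $2$-form $\mu\,dx\wedge dy:=\phi_0^*\omega-\omega(\cR)\,dx\wedge dy$ is supported in the middle strip and has total integral zero, but its integral $m(y)=\int_0^1\mu(x,y)\,dx$ over a horizontal slice need not vanish, so mass \emph{does} have to be transported between slices, i.e.\ across the region where you demand $\psi=\id$ on neither side but between the two collars. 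A primitive $1$-form vanishing on both vertical collars exists only after adding the exact correction $d\bigl(\rho(x)M(y)\bigr)$ with $M(y)=\int_0^y m$, and the resulting Moser vector field is tangent to the top and bottom edges only because $M(0)=M(1)=0$; none of this is in your argument, and without it the flow would push points off $B_0\cup B_1$. Moreover \cite[Theorem 1]{DacMos} does not produce a correction that is the identity on prescribed collars, and does not apply directly to the cornered domain $[0,1]^2$.

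The paper avoids exactly this by inserting an intermediate step you omit: it first arranges constant Jacobian \emph{along} the top and bottom edges during the extension, then runs the Moser argument of \cite{MS} on collars of those two edges (identity on the edges, on the corners, and near the left and right edges), at the cost of one more derivative, so that $\phi^*\omega$ is standard near the \emph{entire} boundary; only then does it apply a control-of-support version of Dacorogna--Moser (\cite{T}, or \cite{DacMos} on a smoothed subdomain whose boundary lies in the already-flattened region). The paper's footnote flags precisely the point your proposal skips: the global theorems require the Jacobian to be already constant near the boundary before they can be invoked. Your construction can be repaired either by the explicit primitive described above or by adding the paper's intermediate top/bottom Moser step, but as written the final step does not go through.
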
 
In particular, there is a neighborhood $\cV$ of $g$ in $\diff^{r+1,\theta}(\A, \omega)$ such that for every  $g' \in \cV$, 
		the region $(\gamma,\,g'(\gamma))$ has admissible coordinates $\phi'$ for which $\|\phi'\|_{\diff^{r,\theta}([0,1]^2)}$ is bounded by a constant depending only on $\|g\|_{\diff^{r+1,\theta}}$, $\|\gamma\|_{C^{r+3,\theta}}$ and $r$.  
%
%
\begin{proof}
	 We construct the admissible coordinates $\phi:[0,1]^2\to\cR$ in stages, working inwards from the four edges.  
	 
	 The first step is to construct a model for $\phi$ on an open neighborhood of the left edge $\{0\} \times  [0,1]$, which we will call $\psi_L$.     
	 Indeed, as $\gamma:[0,1]\rightarrow\A$ 
	 is parametrised by arclength, we have $\|\dot{\gamma}\|=L$ is constant for some $L\geq 1$.   
	 Thus $n:=-i\dot{\gamma}/L$ is a unit normal vector field along 
	 $\gamma$ and the map $:\tilde{A}\rightarrow\R^2,$ $(x,y)\mapsto \gamma(y)+x\omega(\cR)n(y)/L$ extends $\gamma$ to a $C^\infty$-smooth 
	 diffeomorphism $\varphi$ from a small tubular neighborhood of $\{0\} \times  [0,1]$ in $\tilde\A$ 
	 to a neighborhood of the image of $\gamma$ in $\A$, with constant Jacobian $\omega(\cR)$ {\it along}, i.e.\ at each point of,  $\{0\} \times  [0,1]$.  
	 Since $\gamma$ meets the boundary of $\A$ orthogonally near both end points, we also see that $\varphi$ has constant Jacobian $\omega(\cR)$ 
	 on a neighborhood of the end points $(0,0), (0,1)$ within $\tilde\A$.   A computation shows  
	 \ary\label{eq admissible 1}
					\|\varphi\|_{\diff^{r+2,\theta}}\leq 2c\|\gamma\|_{C^{r+3,\theta}}.  
	\eary
	where $c:=\max\{\omega(\cR)/L,\,L/\omega(\cR)\}$.   
We need a further argument to make the Jacobian constant also {\it near} to $\{0\} \times  [0,1]$, i.e.\ on a neighborhood in $\tilde\A$.     To do this, denote by $\omega(\cR)\omega_1$ the pull back of $\omega$ by $\varphi$.  Thus $\omega_1$ is a $C^\infty$-smooth symplectic form defined on a tubular neighborhood of $\{0\} \times  [0,1]$ in $\tilde\A$, which coincides with $\omega$ at each point of $\{0\} \times  [0,1]$ and also on small neighborhoods of $(0,0)$ and $(0,1)$ within 
$\tilde\A$.   
	By a Moser deformation argument following the proof of Lemma 3.14 in \cite{MS}, one finds a coordinate change on a tubular neighborhood of $\{0\} \times  [0,1]$ which is the identity on $\{0\} \times  [0,1]$, and also the identity on small neighborhoods of $(0,0)$ and $(0,1)$, which pulls back $\omega_1$ to $\omega$. 
	Thus, precomposing $\varphi$ with this $C^\infty$-smooth coordinate change we obtain a map, which we denote 
	by $\psi_L$, 
	which maps a neighborhood of $\{0\} \times  [0,1]$ in $\tilde\A$ diffeomorphically to a neighborhood of $\gamma$, it maps $\{0\} \times  [0,1]$ precisely to $\gamma$, and has constant Jacobian equal to $\omega(\cR)$.    
	The deformation argument used in \cite{MS} however ''loses a derivative''\footnote{This is in contrast to some of the results in 
	\cite{DacMos,A10,T}, which however do not immediately apply; \cite{DacMos} 
	requires the boundary to be smooth, while \cite{A10,T} requires constant Jacobian near to the boundary -  
	something we have not yet achieved.}.   
	Thus although $\psi_L$ is a $C^\infty$-smooth diffeomorphism, only $\|\psi_L\|_{\diff^{r+1,\theta}}$ is controlled by $\|\varphi\|_{\diff^{r+2,\theta}}$, i.e.\ by $\|\gamma\|_{C^{r+3,\theta}}$, 
	see \eqref{eq admissible 1}.  	
	 	 
	 Now we set $\psi_R:=g \psi_L T^{-1}$, which defines a $C^{r+1,\theta}$-diffeomorphism with constant Jacobian $\omega(\cR)$ from a neighborhood of $\{1\} \times  [0,1]$ in $\tilde\A$ onto its image, a neighborhood of $g(\gamma)$ in $\A$.    Clearly also $\psi_R$ maps $\{1\} \times  [0,1]$ precisely onto $g(\gamma)$.   
	 Note that $\|\psi_R\|_{\diff^{r+1,\theta}}$ is controlled by $\|g\|_{\diff^{r+1,\theta}}$ and $\|\psi_L\|_{\diff^{r+1,\theta}}$, and hence by $\|g\|_{\diff^{r+1,\theta}}$ and $\|\gamma\|_{C^{r+3,\theta}}$.  
	 
	 We construct the chart $\phi: [0,1]^2 \to \A$ by first specifying its restriction to a neighborhood of $\{0\} \times [0,1]$, resp.\ $\{1\} \times [0,1]$, to be $\psi_L$, resp.\ $\psi_R$.  
	  (Thus $\phi$ satisfies item 3 in Definition \ref{def ad coord}.)    Then we extend $\phi$ to all of $[0,1]^2$ in a more or less arbitrary way, but so that $\phi$ maps $[0,1]^2$ diffeomorphically to $\cR$ (still in the class $C^{r+1,\theta}$).   We also arrange that {\it along} the top and bottom sides, $[0,1] \times \{1\}$ and $[0,1] \times \{0\}$, $\phi$ has constant Jacobian $\omega(\cR)$.    This condition is easy to achieve   by hand.   Now $\phi$ is a $C^{r+1,\theta}$-diffeomorphism and $\|\phi\|_{\diff^{r+1,\theta}}$ is controlled by $\|g\|_{\diff^{r+1,\theta}}$ and $\|\gamma\|_{C^{r+3,\theta}}$.
	 
	 Now we make the Jacobian of $\phi$ constant also {\it near} to the top and bottom $[0,1] \times \{1\}$ and $[0,1] \times \{0\}$, without altering it  
	 near the left and right edges $\{0\} \times  [0,1]$ and $\{1\} \times  [0,1]$.    
		 To do this, set $\phi^*\omega=\omega(\cR)\omega_2$.  
		 Thus $\omega_2$ is a smooth symplectic form on $[0,1]^2$, which coincides with $\omega$ {\it near} to the left and right edges, and 
	 also {\it on} the top and bottom edges.   
	Applying the proof of Lemma 3.14 in \cite{MS} again, this time to the top and bottom edges, we find a further coordinate change on collar neighborhoods of $[0,1] \times \{1\}$ and $[0,1] \times \{0\}$, which is the identity {\it on} these two edges, is also the identity {\it near} to the corners $(0,1), (1,1), (0,0), (1,0)$, and which pulls back $\omega_2$ to $\omega$ (on the collar neighborhoods).   
	We extend this coordinate change from the collar neighborhoods of the top and bottom edges to 
	all of $[0,1]^2$, in such a way that it remains the identity map {\it near} to the left 
	and right edges of $[0,1]^2$.   
	Precomposing $\phi$ with this coordinate change, we may assume that $\phi:[0,1]^2\to\cR$ is a diffeomorphism that has constant Jacobian $\omega(\cR)$ 
	near to all four edges of $[0,1]^2$, and that it still coincides with $\psi_L$, resp.\ $\psi_R$, on a collar neighborhood of $\{0\} \times [0,1]$, resp.\ $\{1\} \times [0,1]$.    
		Since we used the Moser argument from \cite{MS} we lost a derivative of control, so now $\phi$ is just a $C^{r,\theta}$-diffeomorphism 
		with $\|\phi\|_{\diff^{r,\theta}}$  controlled by $\|g\|_{\diff^{r+1,\theta}}$ and $\|\gamma\|_{C^{r+3,\theta}}$. 
	
Finally, we modify $\phi$ away from a neighborhood of the boundary, so as to have constant Jacobian on all of $[0,1]^2$.    
This can be done using either Theorem 5 from \cite{DacMos}, or \cite{T}. 
In either case, one replaces $[0,1]^2$ by a subset $\Omega$ with smooth boundary, where $\partial\Omega$ lies sufficiently close to $\partial [0,1]^2$ in the interior of the region where the Jacobian is already constant.    If we use \cite{T} this final coordinate change can be done without losing a derivative, so that $\phi$ is still a $C^{r,\theta}$-diffeomorphism with $\|\phi\|_{\diff^{r,\theta}}$ controlled by $\|g\|_{\diff^{r+1,\theta}}$ and $\|\gamma\|_{C^{r+3,\theta}}$.
The resulting $\phi$ satisfies all the items in Definition \ref{def ad coord}.   
\end{proof}

\subsection{Pseudo-rotations}
The name pseudo-rotation was introduced in \cite{BCLP}, where it referred to any non-wandering homeomorphism $f : \A \to \A$ isotopic to 
the identity with no periodic points.    In this paper we use the following slightly more restricted definition.   Recall from \ref{SS:lifts} that $\omega$ is the standard smooth area form on $\A$:  

\begin{defi}\label{defi prot}
A \textit{pseudo-rotation} is an $\omega$-preserving diffeomorphism $f : \A \to \A$ that is isotopic to the identity and has no periodic points. 
\end{defi}
Unless otherwise specified, pseudo-rotations will always be $C^\infty$-smooth.   A $C^0$-pseudo-rotation refers to the case that $f$ is a homeomorphism.  

It is a non-trivial fact that all the trajectories of a pseudo-rotation have the same asymptotic rotation number.   
More precisely: 
Let $p_1 : \tilde \A \to \R$ be projection onto the first coordinate.  
	By results of Franks and Franks-Handel \cite{F88, FH12}, if $F:\tilde\A\rightarrow\tilde\A$ is any lift of a pseudo-rotation $f$ then the limit  
	\ary\label{def rotation number F}
	\rho(F) := \lim_{n \to \infty} \frac{1}{n}\Big(p_1(F^{n}(\tilde x)) - p_1(\tilde x) \Big)\in\R 
	\eary
	exists for all $\tilde x\in\tilde\A$, is independent of $\tilde x$, and is irrational.     
	Therefore we can define: 

\begin{defi}\label{def rotation number}
	Let $f:\A \to \A$ be a pseudo-rotation.   Then for each lift $F:\tilde\A\rightarrow\tilde\A$ we call the limit $\rho(F)\in \R\backslash\Q$ above the \textit{rotation number} of $F$ 
	and we call the fractional part 
	\aryst
	\rho(f) := \{\rho(F)\}\in(0,1)\backslash\Q
	\earyst
	the \textit{rotation number} of $f$.    
\end{defi}

Note that $\{\rho(F)\}$ is independent of the lift $F$ because evidently $\rho(T^jF)=\rho(F)+j$ from the definition.   
Note also that for any pseudo-rotation $f$ there exists a unique lift $F$ for which $\rho(F) =\rho(f)$.   

\section{Existence of a Brouwer curve with uniform bounds}\label{S:Brouwer}
By work of Guilliou \cite{G} every pseudo-rotation $f$ has a Brouwer curve $\gamma$.     In this section, under restrictions on the 
rotation number, we obtain a $q_1$-good Brouwer curve with bounds on the geometric degeneracy of an associated fundamental region, see Lemma \ref{lem brouwer curve compactness} and Proposition \ref{prop. main}.   

In this section we exploit smallness assumptions on the rotation number of $f$ and on the rotation number of $f^{q_1}$, i.e.\ on $\a=\rho(f)$ and on $\G(\a)$ relative to $\a$.   
These conditions are of course very restrictive, but in Section \ref{S:renorm} we expand the applicability of these results using a renormalization scheme.   

We begin with the following fundamental estimates, which are essentially proven in \cite{A}. 

\begin{thm}\label{thm aflxz}
	There is a sequence of continuous, increasing functions $A_r : (0, 1/2) \times  \R_+ \to \R_+$, over $r\in\N_0$, 
	with $\lim_{t \to 0}A_r(t,\, \cdot\,) \equiv 0$ and $\lim_{K \to +\infty}A_r(\cdot, K) \equiv +\infty$, so that the following holds.   If $f:\A\to\A$ is a 
	pseudo-rotation, then 
	\ary
	d_{C^0(\A)}(f,{\rm Id}) &<& A_0(\norm{\rho(f)}_{\R / \Z},  \|Df\|_{\A}),\label{aflxz r=0}\\
	\norm{D^rf}_{\A} &<&  A_r(\norm{\rho(f)}_{\R / \Z}, \norm{D^{r+1}f}_{\A}), \quad \forall r \geq 1.\label{aflxz r>0}
	\eary
	If $f$ is only a $C^{r_0}$-smooth pseudo-rotation, with $r_0\in\N$, then \eqref{aflxz r=0} is valid, and if $r_0\geq 2$ then \eqref{aflxz r>0} holds for $1\leq r\leq r_0-1$.   
\end{thm}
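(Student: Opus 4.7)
The plan is to first establish the $C^0$ estimate \eqref{aflxz r=0} and then derive the higher-order estimates \eqref{aflxz r>0} from it by Landau--Kolmogorov interpolation, following \cite{A}.

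For \eqref{aflxz r=0}, fix the lift $F:\tilde\A\to\tilde\A$ of $f$ with $\rho(F)\in(-1/2,1/2)$, so $|\rho(F)|=\|\rho(f)\|_{\R/\Z}$. Argue by contradiction: suppose $|F(\tilde x_0)-\tilde x_0|\geq \Delta$ for some $\tilde x_0\in\tilde\A$ and some $\Delta$ substantially larger than the desired threshold $A_0(\|\rho(f)\|_{\R/\Z},\|Df\|)$. The Lipschitz bound $\|Df\|$ forces the horizontal displacement $p_1(F(\tilde x)-\tilde x)$ to retain fixed sign and magnitude of order $\Delta$ on a topological disc $B$ of diameter $\sim \Delta/\|Df\|$. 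Because $f$ preserves $\omega$ and $\rho(F)$ is small relative to $\Delta$, integrating $p_1(F(\tilde x)-\tilde x)$ against $\omega$ over a fundamental domain of $T$ forces the existence of a disc $B'$ of comparable size on which the horizontal displacement has the opposite sign. Iterating $F$, these produce, respectively, a positively and a negatively returning disc in the sense of Franks \cite{F88}. Franks' theorem then furnishes a periodic point of $f$, contradicting the pseudo-rotation hypothesis. Optimizing in $\Delta$ delivers the continuous increasing function $A_0$ with the claimed monotonicity and limit properties.

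For \eqref{aflxz r>0}, observe that $\partial^\alpha F = \partial^\alpha(F-\Id_{\tilde\A})$ for any multi-index with $|\alpha|\geq 2$, while for $|\alpha|=1$ the difference is a bounded constant. The standard Landau--Kolmogorov interpolation inequality on the strip $\tilde\A$ yields, for each $r\geq 1$, a constant $C_r>0$ with
\[
\|D^r(F-\Id_{\tilde\A})\|_{\tilde\A}\ \leq\ C_r\,\|F-\Id_{\tilde\A}\|_{\tilde\A}^{1/(r+1)}\,\|D^{r+1}(F-\Id_{\tilde\A})\|_{\tilde\A}^{r/(r+1)}.
\]
By Lemma \ref{lem:lift}, once $\|\rho(f)\|_{\R/\Z}$ is small enough that $A_0<1/2$, the factor $\|F-\Id_{\tilde\A}\|_{\tilde\A}$ equals $d_{C^0(\A)}(f,\Id)$, which is controlled by \eqref{aflxz r=0}. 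Substituting and resolving the implicit dependence on $\|Df\|$ inside the argument of $A_0$ by bootstrapping (treating the $r=1$ case first and feeding it back in) yields the desired functions $A_r$ for $r\geq 1$. When $f$ is only $C^{r_0}$-smooth, the same argument applies whenever $r_0\geq r+1$, giving the claimed partial version.

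The main obstacle lies entirely in \eqref{aflxz r=0}: one must quantitatively pass from a single large pointwise displacement to a genuine pair of positively and negatively returning discs satisfying Franks' hypotheses, controlling both the size of the Lipschitz-transported disc and the companion disc produced by area preservation. This delicate analysis is carried out in \cite{A}; our proof reduces to invoking that reference while verifying the explicit regularity dependence and the continuity/monotonicity of $A_r$ that will be used in Section \ref{S:renorm} and thereafter.
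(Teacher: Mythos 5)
Your proposal is correct and follows essentially the same route as the paper: the $C^0$ bound is delegated to the quantitative non-displacement result of \cite{A} (quoted in the paper as: every topological disc of area greater than $\|\rho(f)\|_{\R/\Z}$ meets its image, which combined with the Lipschitz bound gives $A_0(t,K)=(1+2K)t^{1/2}$), and the higher-order bounds follow by Landau--Kolmogorov interpolation applied to $F-\Id$. The only cosmetic difference is that the paper runs the interpolation on a lifted non-displaceable disc through an arbitrary point rather than on all of $\tilde\A$, which sidesteps the smallness assumption on $\|\rho(f)\|_{\R/\Z}$ that your appeal to Lemma \ref{lem:lift} requires.
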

\begin{proof}
	We abrieviate $K_r:=\norm{D^rf}_{\A}$ for each $r\in\N$. 
	Lemma 2.1 in \cite{A} is a symplectic statement about $C^0$-pseudo-rotations on the disc $\D$, so it transfers directly to pseudo-rotations on the annulus and 
	we conclude the following for $f$: Every topological disc $D\subset\A$ with area greater than $\norm{\rho(f)}_{\R / \Z}$ has $f(D)\cap D\neq\emptyset$.   
	So if $f$ is at least $C^1$, arguing analogously to \cite[Corollary B]{A},  	
	\aryst
	d_{C^0(\A)}(f,{\rm Id}) < (1+2K_1)\norm{\rho(f)}_{\R / \Z}^{1/2} =: A_0(\norm{\rho(f)}_{\R / \Z},K_1).	
	\earyst
	Lifting the statement about non-displaceable discs to the universal covering gives; for every topological disc $D\subset\tilde\A$ with 
	area greater than $\norm{\rho(f)}_{\R / \Z}$ and which is the lift of a topological disc in $\A$, there is a lift $F$ of $f$ (depending on $D$) 
	for which $F(D)\cap D\neq\emptyset$.   This leads to a bound of the form $\|F-\id_{\tilde\A}\|_{D}\leq (1+\|DF\|)\norm{\rho(f)}_{\R / \Z}^{1/2}\leq(1+2K_1)\norm{\rho(f)}_{\R / \Z}^{1/2}$.	Suppose $f$ is of class $C^{r_0}$, $r_0\geq2$.    
	Applying the convexity estimates that interpolate between $C^s$-norms, to $F:D\to\R^2$, we get for each integer $1\leq r\leq r_0-1$: 
	\aryst
	\norm{D^r F}_{D} &\leq& c_r\, \|F-\id\|_{D}^{\frac{1}{r+1}}\norm{F-\id}_{C^{r+1}(D)}^{\frac{r}{r+1}} \\
	&\leq&c_r \norm{\rho(f)}_{\R / \Z}^{\frac{1}{2(r+1)}}(1+2K_{r+1}) =: A_r( \norm{\rho(f)}_{\R / \Z},  K_{r+1})
	\earyst
for some constant $c_r>0$.   Since we can find such a $D\subset\tilde\A$ containing an arbitrary point, we conclude $\norm{D^r f}_{\A}=\norm{D^r F}_{\tilde\A}\leq A_r( \norm{\rho(f)}_{\R / \Z},  K_{r+1}).$   
\end{proof}
By Theorem \ref{thm aflxz} if the rotation number of a pseudo-rotation $f$ is sufficiently small, then $d_{C^0(\A)}(f,{\rm Id}_{\A})  <1/2$ and hence by Lemma \ref{lem:lift} 
\ary\label{lem Fidfid}
			\norm{F - {\rm Id}_{\tilde\A}} = d_{C^0(\A)}(f,{\rm Id}_{\A})  <1/2
\eary	 
for some unique lift $F$.   
More precisely, there is an increasing function 
\ary\label{def a}
\varepsilon_0 :\R_+ \to (0,1/2)
\eary 
so that \eqref{lem Fidfid} holds whenever 
\aryst
\|\rho(f)\|_{\R / \Z} < \varepsilon_0(\norm{Df}^{-1}_{\A}).  
\earyst	
The next Lemma says that under smallness conditions on both the rotation number $\a$ and on $\G(\a)$, a pseudo-rotation moves each point 
a uniform positive amount.    
\begin{lemma}\label{lem no almost fixed point}
		There are increasing functions 
		$\delta, \varepsilon_1 : (0,1)\times(0,1) \to \R_+$ 
		with $\lim_{t \to 0} \varepsilon_1(t,\cdot)\equiv 0$, so that the following holds.   If $\alpha\in (0,1)\backslash\Q$ satisfies  
		\ary
		\a &<& \varepsilon_0(K^{-1}), \label{eq no fixed 02} \\
			\G(\a) &<& \varepsilon_1(\a, K^{-1}) \label{eq no fixed 01}
		\eary
		for some $K>1$, 
		where $\varepsilon_0$ is given in \eqref{def a}, then every $C^{1}$-smooth pseudo-rotation with $\norm{Df}_{\A} \leq K$ 
		and $\rho(f)=\a$ satisfies   
		\ary\label{min distance}
		\inf_{x \in \A} d_{\A}(x, f(x)) \geq \delta(\a, K^{-1} ).
		\eary
		Moreover, without loss of generality 
		\ary \label{eq c''issmall}
		\varepsilon_1(\alpha, K^{-1}) < \varepsilon_0(K^{-\alpha^{-1}}) 
		\eary
		for all $(\alpha,K) \in (0,1)\times(1,\infty)$.  
	\end{lemma}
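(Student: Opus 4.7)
The plan is to produce a quantitative contradiction between two estimates for $F^{q_1}(\tilde x)-\tilde x$, where $F$ is the distinguished lift of $f$ with $\rho(F)=\a$. From above, the smallness hypothesis on $\G(\a)$ will force $F^{q_1}$ to be $C^0$-close to the translation $T$. From below, if there were a point with very small displacement, a naive iteration with Lipschitz constant $K$ would trap $|F^{q_1}(\tilde x)-\tilde x|$ far from $(1,0)$. The quantitative balance between these two bounds is exactly what forces $\delta(\a,K^{-1})$ to scale like $K^{-1/\a}$.

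The first step is to set up the lifts. Since $\a<\varepsilon_0(K^{-1})$, the definition of $\varepsilon_0$ together with Lemma \ref{lem:lift} gives a unique lift $F$ of $f$ with $\norm{F-\id}_{\tilde\A}=d_{C^0(\A)}(f,\id)<1/2$, and this lift satisfies $\rho(F)=\a$. I then set
\aryst
\varepsilon_1(\a,K^{-1})\ :=\ \tfrac12\,\varepsilon_0(K^{-1/\a}),
\earyst
which is increasing in each of $\a$ and $K^{-1}$, tends to $0$ as $\a\to 0$, and automatically obeys \eqref{eq c''issmall}. Under the hypothesis $\G(\a)<\varepsilon_1(\a,K^{-1})$, using $q_1=q_1(\a)\leq 1/\a$ and monotonicity of $\varepsilon_0$, one gets $\a\G(\a)<\varepsilon_0(K^{-q_1})$. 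Applying Theorem \ref{thm aflxz} to the $C^1$-pseudo-rotation $f^{q_1}$, which has $\norm{Df^{q_1}}_\A\leq K^{q_1}$ and rotation number $\norm{q_1\a}_{\R/\Z}=\a\G(\a)$ (as $q_1\a=1-\a\G(\a)$), yields
\aryst
d_{C^0(\A)}(f^{q_1},\id)\ <\ A_0\big(\a\G(\a),\,K^{q_1}\big)\ <\ 1/2.
\earyst
By Lemma \ref{lem:lift} the unique near-identity lift of $f^{q_1}$ coincides with $T^{-1}F^{q_1}$, since the latter has rotation number $q_1\a-1=-\a\G(\a)\in(-1/2,1/2)$. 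Consequently, for every $\tilde x\in\tilde\A$,
\aryst
|F^{q_1}(\tilde x)-\tilde x-(1,0)|\ <\ A_0\big(\a\G(\a),\,K^{q_1}\big)\ <\ 1/2,
\earyst
and hence $|F^{q_1}(\tilde x)-\tilde x|>1/2$ everywhere in $\tilde\A$.

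For the second step, assume some $x_0\in\A$ has $\delta:=d_\A(x_0,f(x_0))$. By Lemma \ref{lem:lift}, any lift $\tilde x_0$ satisfies $|F(\tilde x_0)-\tilde x_0|=\delta$. Setting $y_k:=F^k(\tilde x_0)$ and $\xi_k:=y_{k+1}-y_k$, the identity $\xi_{k+1}=F(y_{k+1})-F(y_k)$ combined with the Lipschitz bound $\norm{DF}_{\tilde\A}=\norm{Df}_\A\leq K$ on the convex domain $\tilde\A$ gives $|\xi_{k+1}|\leq K|\xi_k|$, whence $|\xi_k|\leq K^k\delta$ by induction. Summing,
\aryst
|F^{q_1}(\tilde x_0)-\tilde x_0|\ \leq\ \sum_{k=0}^{q_1-1}|\xi_k|\ \leq\ \frac{K^{q_1}-1}{K-1}\,\delta.
\earyst
Confronting this with the previous lower bound yields $\tfrac{1}{2}<\tfrac{K^{q_1}-1}{K-1}\delta$, so it suffices to set
\aryst
\delta(\a,K^{-1})\ :=\ \frac{K-1}{2(K^{1/\a}-1)},
\earyst
which is positive, continuous, and monotonically increasing in each of $\a$ and $K^{-1}$; the inequality $q_1\leq 1/\a$ ensures $K^{q_1}\leq K^{1/\a}$ and thus \eqref{min distance}. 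The main subtlety is the quantitative matching: the exponential blow-up of $K^{q_1}\approx K^{1/\a}$ in the iterated Lipschitz estimate is exactly what the smallness $\G(\a)<\varepsilon_0(K^{-1/\a})/2$ is calibrated to beat inside $A_0$, which is the content of \eqref{eq c''issmall}.
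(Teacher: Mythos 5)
Your proof is correct and follows essentially the same route as the paper's: the lower bound $|F^{q_1}(\tilde x)-\tilde x|>1/2$, obtained from $F^{q_1}$ being $C^0$-close to $T$, is played against the iterated Lipschitz upper bound $\frac{K^{q_1}-1}{K-1}\,|F(\tilde x)-\tilde x|$, yielding the same $\delta(\a,K^{-1})=\frac{K-1}{2(K^{1/\a}-1)}$. The only (harmless) difference is that you take $\varepsilon_1=\frac12\varepsilon_0(K^{-1/\a})$ alone and invoke the defining property of $\varepsilon_0$ applied to $f^{q_1}$ to get $d_{C^0(\A)}(f^{q_1},\id)<1/2$, whereas the paper additionally takes a minimum with an explicit threshold forcing $A_0(\cdot,K^{1/\a})\leq 1/4$.
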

	Note that the conditions \eqref{eq no fixed 02} and \eqref{eq no fixed 01} are easily fulfilled:  For all $\epsilon_0,\epsilon_1>0$ 
	the set of $\a\in(0,1)\backslash\Q$ with $\a<\epsilon_0$ and $\G(\a)<\epsilon_1$ is non-empty.   

	\begin{proof}
		Define $\varepsilon_1:(0,1)\times(0,1) \to \R_+$ as follows: $\varepsilon_1(t,K^{-1})$ to be the minimum of: 
		\ary\label{D:C''}
							\sup\Big\{ s\in(0,1)\ |\ A_0(s, K^{1/t})\leq 1/4 \Big\},\qquad\frac{1}{2}\varepsilon_0(K^{-1/t}).
		\eary
		Since $\lim_{s \to 0}A_0(s,\cdot)= 0$ the supremum is over a non-empty set, so that the first value is well-defined and strictly positive.   
		Clearly \eqref{eq c''issmall} holds due to the value on the right in \eqref{D:C''}.   Since $A_0$ and $\varepsilon_0$ are increasing, one 
		checks that $\varepsilon_1$ so defined is also increasing.   
		Moreover, for each fixed $K>1$, we have $\lim_{t \to 0}A_0(\cdot, K^{1/t})=+\infty$, so that the first value 
		in \eqref{D:C''} tends to zero as $t$ tends to zero.   Thus $\lim_{t \to 0}\varepsilon_1(t,\cdot)\equiv 0$.   Finally, since $\varepsilon_1(t,K^{-1})$ 
		is at most the first value in \eqref{D:C''}, by continuity of $A_0$ we have 
		\ary\label{eq no fixed 1}
				\qquad\qquad	A_0\big(\varepsilon_1(t,K^{-1}), K^{1/t}\big)\leq1/4<1/2\qquad\forall (t,K)\in(0,1)\times(1,\infty).
		\eary
		Now define the increasing function $\delta:  (0,1) \times(0,1) \to \R_+$ by 
		\aryst 
						\delta(t,K^{-1}) := \frac{1}{2}\frac{K-1}{K^{t^{-1}}-1}.    
		\earyst
		We will show that the Lemma holds for these two functions. 
		Let $K\geq 1$ be fixed.  Suppose $\a\in(0,1)\backslash\Q$ satisfies (\ref{eq no fixed 02}) and (\ref{eq no fixed 01}).   Let $f$ be a $C^1$-smooth 
		pseudo-rotation with $\norm{Df}_{\A} < K$ and $\rho(f)=\a$.   
		Let $F$ be the lift of $f$ with $\rho(F) = \rho(f)$ and abbrieviate $q := q(\a)=\lfloor 1/\a\rfloor\leq 1/\a.$   
		By Theorem \ref{thm aflxz}, 
		\aryst
		d_{C^0(\A)}(f^q,{\rm Id}) \leq  A_0(\|\rho(f^{q})\|_{\R/\Z}, \|Df^{q}\|)
									\leq  A_0(\|q\a\|_{\R/\Z}, K^{q}).\label{bound on f^q}
		\earyst
		From \eqref{E:pn distance} and \eqref{eq no fixed 01} we have $\|q\a\|_{\R/\Z}=p_1-q_1\a=1-q\a=\a\G(\a)<\G(\a)< \varepsilon_1(\a, K^{-1})$.    
		Thus by monotonicity of $A_0$, the line above, and \eqref{eq no fixed 1} yield 
		$d_{C^0(\A)}(f^q,{\rm Id})\leq  A_0(\varepsilon_1(\a, K^{-1}), K^{1/\a})< 1/2$, so by Lemma \ref{lem:lift}
		\aryst 
		\|F^{q}-T\|_{\tilde\A}= \|T^{-1}F^{q}-{\rm{Id}}\|_{\tilde\A}=d_{C^0(\A)}(f^q,{\rm Id})<1/2,  
		\earyst
		because $T^{-1}F^q$ is 
		the unique lift of $f^q$ with rotation number in $(-1/2, 1/2)$ (since 
		$\rho(T^{-1}F^q)=q\rho(F)-1=q\a-1\in(-1/2,1/2)$).   Thus for all $\tilde x\in\tilde\A$
		\ary \label{eq no fixed 4}
		\qquad |F^{q}(\tilde x)-\tilde x|\geq |T(\tilde x)-\tilde x| - |F^{q}(\tilde x)-T(\tilde x)| > 1-1/2 = 1/2.  
		\eary
	   On the other hand 
		$|F^q(\tilde x)-\tilde x|  \leq \sum_{j=0}^{q-1}\|DF\|_{[0,1]^2}^j|F(\tilde x)-\tilde x| \leq C|F(\tilde x)-\tilde x|$ 
		where $C=\frac{K^{q}-1}{K-1}\leq \frac{K^{\a^{-1}}-1}{K-1}$.  
		Combining with (\ref{eq no fixed 4}) yields 
		\aryst
					|F(\tilde x)-\tilde x| \geq \frac{1}{2C}\qquad \forall \tilde x\in\tilde\A.    
		\earyst
		The right hand side is $\delta(\a,K^{-1})$ by definition.  
	    	By Lemma \ref{lem:lift} together with \eqref{eq no fixed 02} and \eqref{lem Fidfid}, the left hand side is $d_{\A}(f(x),x)$, 
		for $x=\pi(\tilde x)$.   
		\end{proof}
%
%
%
%
\begin{lemma}\label{lem brouwer curve compactness}
	There is a sequence of  increasing functions  
	$C_r : \R_+^{2} \to \R_+$ over $r\geq 1$,  
	so that  the following holds.   Let $f$ be a pseudo-rotation with $\rho(f)=\a$, with
	\aryst
		\a &< &  \varepsilon_0(\|Df\|^{-1}), \\
		\cG(\a) &<& \varepsilon_1(\a, \|Df\|^{-1})
		\earyst
		where $\varepsilon_0$ is from \eqref{def a} and $\varepsilon_1$ is from Lemma \ref{lem no almost fixed point}.
	Then for each $r\in\N$ and $\theta\in(0,1)$, $f$ has a smooth Brouwer curve $\gamma=\gamma_r$ for which 
	the closed region bounded by $(\gamma, f(\gamma))$ has admissible coordinates $\phi=\phi_r:[0,1]^2\to(\gamma, f(\gamma))$ with 
	\ary\label{L:phi estimates}
				\|\phi\|_{\diff^{r,\theta}([0,1]^2)} \leq C_r(\a^{-1}, \|f\|_{\diff^{r+2}(\A)}). 
	\eary
\end{lemma}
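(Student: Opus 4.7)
The plan is to construct a smooth Brouwer curve $\gamma$ for $f$ whose $C^{r+3,\theta}$-norm is controlled by $\alpha^{-1}$ and $\|f\|_{C^{r+2}(\A)}$, and then to invoke Lemma \ref{A:admissible-coordinates-modified} on the pair $(f,\gamma)$. The hypotheses on $\alpha$ enter through two estimates already established in this section. First, Theorem \ref{thm aflxz} forces the lift $F$ of $f$ with $\rho(F)=\alpha$ to be $C^r$-close to $\id_{\tilde\A}$, controlled quantitatively by $\alpha$ and $\|f\|_{C^{r+1}}$. Second, Lemma \ref{lem no almost fixed point} forces both $|F(\tilde x)-\tilde x|\geq\delta:=\delta(\alpha,\|Df\|^{-1})>0$ and $\|F^{q_1}-T\|_{\tilde\A}<1/2$. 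The natural object to work with is the displacement $V:\A\to\R^2$, $V(x):=F(\tilde x)-\tilde x$ (well-defined on $\A$ because $F$ commutes with $T$), which is a nowhere vanishing $C^{r+1}$-smooth map with $\delta\leq|V|\leq A_0(\alpha,\|Df\|)$ and $\|V\|_{C^{r+1}(\A)}$ controlled by $\|f\|_{C^{r+1}(\A)}$.

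The curve $\gamma$ I would construct as an integral trajectory of the rotated vector field $W:=V^\perp=(-V_2,V_1)$, launched from a suitable point on $B_0$. By construction, $\dot\gamma(t)$ is perpendicular to $V(\gamma(t))$, so $\gamma$ is automatically transverse to $V$ with angle bounded uniformly below by $\delta/A_0$, and standard ODE estimates then bound $\|\gamma\|_{C^{r+3,\theta}}$ by a function of $\|V\|_{C^{r+2,\theta}}$ and $\delta^{-1}$, hence of $\|f\|_{C^{r+2}}$ and $\alpha^{-1}$. To see that $\gamma$ reaches $B_1$ in finite time, I would combine Poincar\'e–Bendixson with the flux identity for area-preserving annulus maps: a solution of $\dot\eta=W(\eta)$ in $\A$ either exits through $\partial\A$ or accumulates on a closed orbit of $W$, and the identity giving positive flux $\alpha>0$ across any transverse section rules out both a non-contractible closed $W$-orbit and a trajectory exiting back through $B_0$ in a persistent way. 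A standard modification of $\gamma$ near the endpoints then ensures that it meets $\partial\A$ orthogonally without losing the transversality or the norm bound.

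The main obstacle is verifying the global Brouwer property $\gamma\cap f(\gamma)=\emptyset$: the infinitesimal transversality $\dot\gamma\perp V$ only gives $f(\gamma)$ locally off $\gamma$, and I need it to stay off globally. This is where the stronger hypothesis $\cG(\alpha)<\varepsilon_1(\alpha,\|Df\|^{-1})$ plays the decisive role, through the estimate $\|F^{q_1}-T\|_{\tilde\A}<1/2$: this forces the direction of $V$ to be globally consistent enough along long $f$-orbits that, combined with the smallness $|V|\leq A_0$, the image $f(\gamma)$ cannot loop back across $\gamma$. Concretely I would show that every point in a tubular neighborhood of $\gamma$ of width comparable to $A_0$ has its forward $f$-orbit advancing monotonically in the horizontal coordinate for $q_1$ steps, precluding any intersection of $f(\gamma)$ with $\gamma$.

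Finally, the closed region $\cR$ bounded by $(\gamma,f(\gamma))$ has area $\omega(\cR)=\alpha$ by the same flux identity, so $\omega(\cR)^{-1}=\alpha^{-1}$. Applying Lemma \ref{A:admissible-coordinates-modified} to $(f,\gamma)$ yields admissible coordinates $\phi:[0,1]^2\to\cR$ with $\|\phi\|_{\diff^{r,\theta}([0,1]^2)}\leq D_r(\|f\|_{\diff^{r+1,\theta}(\A)},\|\gamma\|_{C^{r+3,\theta}},\alpha^{-1})$, and substituting the previously established bound on $\|\gamma\|_{C^{r+3,\theta}}$ yields the desired $\|\phi\|_{\diff^{r,\theta}([0,1]^2)}\leq C_r(\alpha^{-1},\|f\|_{\diff^{r+2}(\A)})$ for a suitable continuous increasing $C_r$.
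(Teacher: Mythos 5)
Your approach is genuinely different from the paper's, and it contains a gap that I do not see how to close: the verification that the integral curve $\gamma$ of $W=V^\perp$ is actually a Brouwer curve. Transversality of the displacement $V$ to $\gamma$ at each base point only says that $f(\gamma(t))$ leaves $\gamma$ infinitesimally; it says nothing about where the tip $\gamma(t)+V(\gamma(t))$ lands, and in particular $f(\gamma(t))$ may perfectly well land back on $\gamma(s)$ for some $s\neq t$. This is precisely why constructing Brouwer curves is hard and why Brouwer's translation theorem (and Guillou's refinement) is a deep result rather than an ODE exercise. Your proposed repair --- that $\|F^{q_1}-T\|<1/2$ makes the $f$-orbit of a tubular neighborhood of $\gamma$ ``advance monotonically in the horizontal coordinate for $q_1$ steps'' --- does not address this: the Brouwer property concerns a \emph{single} iterate evaluated \emph{globally} along $\gamma$, the curve $\gamma$ itself may wander arbitrarily in the horizontal coordinate (its $C^1$ bound involves $\delta^{-1}$, which blows up), and monotone horizontal advance of orbits is neither implied by the stated estimates nor sufficient to separate $f(\gamma)$ from $\gamma$. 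A secondary gap of the same nature occurs in your Poincar\'e--Bendixson step: ruling out a non-contractible closed orbit $\sigma$ of $W$ via ``positive flux'' again confuses the direction of $V$ at its base points with the location of $f(\sigma)$; the flux across $\sigma$ is a signed area determined by where points land, and transversality of $V$ along $\sigma$ does not force its sign. (There is also a minor regularity mismatch: to get $\|\gamma\|_{C^{r+3,\theta}}$, as required by Lemma \ref{A:admissible-coordinates-modified}, from an integral curve you need the field in $C^{r+2,\theta}$, which is not controlled by $\|f\|_{\diff^{r+2}(\A)}$.)

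For contrast, the paper sidesteps the construction problem entirely. It introduces the class $\cH(c,K)$ of area-preserving diffeomorphisms with $\|g\|_{\diff^{r+2}}\leq K$ and displacement bounded below by $c$, observes that its closure in $\diff^{r+1,\theta}$ is compact and consists of fixed-point-free maps, obtains for each such $g$ a $C^0$ Brouwer curve from Guillou's theorem \cite{G} (then smooths it), applies Lemma \ref{A:admissible-coordinates-modified} on a neighborhood $\cV_g$, and extracts a finite subcover to get a uniform bound $E_r(c,K)$. The hypotheses on $\a$ and $\G(\a)$ enter only through Lemma \ref{lem no almost fixed point}, which supplies the lower bound $c=\delta(\a,\|Df\|^{-1})$ on the displacement; this is what places $f$ in $\cH(c,K)$ and makes the compactness argument legitimate. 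The price is that $C_r$ is completely non-explicit --- which the paper can afford, since only ``soft'' bounds are needed downstream. If you want a constructive proof along your lines, the essential missing ingredient is a quantitative substitute for Guillou's theorem, not a refinement of the estimates you already have.
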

\begin{proof}
	Fix $r\geq 1$ and $\theta\in(0,1)$.   
	For each $c>0$ and $K\geq 1$ define the following subset of $\diff^{r+2}(\A, \omega)$:
	\aryst
 			\cH(c,K) := \Big\{g \in \diff^{r+2}(\A, \omega)\ \Big|\ \norm{g}_{\diff^{r+2}(\A)} \leq K,\quad \inf_{x \in \A} d_{\A}(x, g(x)) \geq c \Big\}.
	\earyst
Denote by  
\aryst
	\overline{\cH(c,K)} \subset \diff^{r+1,\theta}(\A, \omega)
\earyst
the closure of $\cH(c,K)$ in the $\diff^{r+1,\theta}$-topology.   We observe that $\overline{\cH(c,K)}$ is compact in $\diff^{r+1,\theta}(\A)$. 
(Even though zeroth order terms are not included in $\|\cdot\|_{\diff^r}$, they are trivially uniformly bounded.)    
Clearly $\overline{\cH(c,K)}$ contains only diffeomorphisms without fixed points.   
Moreover, for $c'\geq c$ and $K'\leq K$ we have $\overline{\cH(c',K')}\subset \overline{\cH(c,K)}$.   
We now prove a version of the Lemma for elements of $\overline{\cH(c,K)}$.   Then we argue that the union of 
$\overline{\cH(c,K)}$ over $c\in(0,1]$ contains all pseudo-rotations satisfying the assumptions of the Lemma.   

	Fix $c>0, K\geq1$ and consider $g \in \overline{\cH(c,K)}$.   Since $g$ has no fixed points, a strong refinement of Brouwer's plane 
	translation theorem due to Guillou \cite[Th\'eor\`em 5.1]{G}  
		yields a $C^0$ Brouwer curve $\gamma_0$ 
	for $g$ in the sense of Definition \ref{def:Brouwer}.  
	Any sufficiently $C^0$-close smoothly embedded 
	approximation of $\gamma_0$ that continues to connect the two boundary components yields a smooth Brouwer curve $\gamma$ for $g$.   
	Clearly we can choose $\gamma$ to meet the boundary components orthogonally and parameterised by 
	arclength. 
	
	Applying Lemma \ref{A:admissible-coordinates-modified} gives an open neighborhood $\cV_g\subset\diff^{r+1,\theta}(\A)$ of $g$, 
	so that the following holds:  For each $g'\in\cV_g$, $\gamma$ is still a Brouwer curve and the region between $\gamma$ and $g'(\gamma)$ 
	has admissible coordinates on which $\|\cdot\|_{\diff^{r,\theta}([0,1]^2)}$ is uniformly bounded throughout $\cV_g$.      
	By compactness of $\overline{\cH(c,K)}\subset \diff^{r+1,\theta}(\A, \omega)$ the collection of all such 
	neighborhoods $\cV_g$ as $g$ varies over $\overline{\cH(c,K)}$ has a finite subcover of $\overline{\cH(c,K)}$.    
	 Thus we obtain a uniform bound $E_r(c,K)>0$ on the $\|\cdot\|_{\diff^{r,\theta}([0,1]^2)}$-size of admissible coordinates that  
	applies to all elements of $\overline{\cH(c,K)}$.   
	From the inclusions $\overline{\cH(c',K')}\subset \overline{\cH(c,K)}$ for $c'>c, K'<K$, we can assume $E_r(c,K)$ is 
	decreasing in $c$ and increasing in $K$.   
	
	Now, suppose $f$ is as in the Lemma.   
	Then by Lemma \ref{lem no almost fixed point} $f\in \cH(c,K)$ where $c=\delta(\a, \|Df\|^{-1})$ and $K=\|f\|_{\diff^{r+2}(\A)}$.   
Thus there exists a region $(\gamma,\,f(\gamma))$, for some smooth Brouwer curve $\gamma$, having admissible coordinates $\phi$ with 
$\|\phi\|_{\diff^{r,\theta}([0,1]^2)}\leq E_r(\delta(\a, \|Df\|^{-1}), \|f\|_{\diff^{r+2}(\A)})$.  So Lemma \ref{lem brouwer curve compactness} 
	holds with $C_r(\a^{-1},K):=E_r(\delta(\a, K^{-1}),K)$.  
\end{proof}
With further restrictions on the rotation number the Brouwer curve in Lemma \ref{lem brouwer curve compactness} is $q_1$-good:    
\begin{prop}\label{prop. main}
	There is an increasing function $\varepsilon_2 : \R_+^{2} \to \R_+$ with $\varepsilon_2\leq \varepsilon_1$, 
	so that  the following holds.   Let $f$ be a pseudo-rotation with $\rho(f)=\a$ so that    
	\ary
		\a &< &  \varepsilon_0(\|Df\|^{-1}),\nonumber \\
		\cG(\a) &<& \varepsilon_2(\a, \|f\|_{\diff^3(\A)}^{-1}).\label{E:assumption on G(a)}
	\eary
	Then each Brouwer curve $\gamma=\gamma_r$ for $f$ produced by Lemma \ref{lem brouwer curve compactness} (for any $r\in\N$) is $q(\a)$-good.  
\end{prop}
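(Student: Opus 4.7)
The strategy is to establish, in the universal cover $\tilde\A$, the strict chain
\[
\tilde\gamma_0 < \tilde\gamma_1 < \cdots < \tilde\gamma_{q_1-1} < T(\tilde\gamma_0),
\]
where $F$ denotes the unique lift of $f$ with $\rho(F)=\alpha$ (so $\|F-\mathrm{Id}\|_{\tilde\A}<1/2$ by Lemma \ref{lem:lift}), $\tilde\gamma_0$ is a lift of $\gamma$, and $\tilde\gamma_j := F^j(\tilde\gamma_0)$. Upon projection to $\A$ and using $T$-equivariance this immediately forces $\gamma, f(\gamma), \ldots, f^{q_1-1}(\gamma)$ to be pairwise disjoint. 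Monotonicity is easy: Brouwer's property $\tilde\gamma_0 \cap \tilde\gamma_1 = \emptyset$ combined with the positivity of $\rho(F)$ (which makes $F|_{\tilde B_0}$ an increasing lift with no fixed points, hence with $F|_{\tilde B_0}(x)>x$) forces $\tilde\gamma_1 > \tilde\gamma_0$. Since $F$ is an orientation-preserving homeomorphism commuting with $T$, it preserves the partial order $<$ on essential simple curves; iterating, and using Brouwer applied via $F^j$ for consecutive disjointness, yields $\tilde\gamma_0 < \tilde\gamma_1 < \tilde\gamma_2 < \cdots$.

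The quantitative content has two parts. First, applying Lemma \ref{lem brouwer curve compactness} (with, say, $r=1$ and some $\theta\in(0,1)$) to $(\gamma, f(\gamma))$, the admissible coordinates $\phi\colon [0,1]^2 \to \cR$ satisfy $\|\phi\|_{\diff^{1,\theta}([0,1]^2)} \leq C_1(\alpha^{-1}, \|f\|_{\diff^{3}(\A)})$; since $\phi^{-1}$ is then uniformly Lipschitz and the preimages $\phi^{-1}(\gamma), \phi^{-1}(f(\gamma))$ in $[0,1]^2$ are at distance exactly $1$, we obtain
\[
g_0 := \inf_{p\in\tilde\gamma_0,\, q\in\tilde\gamma_1} |p-q| \;\geq\; 1/C_1(\alpha^{-1}, \|f\|_{\diff^{3}(\A)}) \;>\; 0.
\]
Second, by Theorem \ref{thm aflxz} applied to $f^{q_1}$, and using $\beta_1 = \alpha\mathcal{G}(\alpha)$ together with $\|Df^{q_1}\| \leq \|f\|_{\diff^3}^{1/\alpha}$,
\[
d_{C^0(\A)}(f^{q_1},\mathrm{Id}) \;\leq\; A_0\!\left(\alpha\,\mathcal{G}(\alpha),\ \|f\|_{\diff^3}^{1/\alpha}\right),
\]
which tends to $0$ as $\mathcal{G}(\alpha)\to 0$. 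I define $\varepsilon_2(\alpha, K^{-1}) \leq \varepsilon_1(\alpha, K^{-1})$ small enough that this right-hand side is less than $g_0(\alpha^{-1}, K)/2$ whenever $\mathcal{G}(\alpha) < \varepsilon_2(\alpha, K^{-1})$ and $\|f\|_{\diff^3} \leq K$. Then $T^{-1}F^{q_1}$, the unique lift of $f^{q_1}$ with rotation number $-\beta_1 \in (-1/2, 1/2)$, satisfies $\|F^{q_1} - T\|_{\tilde\A} < g_0/2$ by Lemma \ref{lem:lift}.

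To conclude, by $T$-equivariance the minimum distance between $T(\tilde\gamma_0)$ and $T(\tilde\gamma_1)$ is also $\geq g_0$; every point of $F^{q_1}(\tilde\gamma_0)$, lying within $g_0/2$ of the corresponding point of $T(\tilde\gamma_0)$, is therefore strictly more than $g_0/2$ from every point of $T(\tilde\gamma_1)$. Hence $F^{q_1}(\tilde\gamma_0) \cap T(\tilde\gamma_1) = \emptyset$, and combined with $T(\tilde\gamma_0) < T(\tilde\gamma_1)$ we obtain $F^{q_1}(\tilde\gamma_0) < T(\tilde\gamma_1)$. Applying the order-preserving $F^{-1}$ and the commutation $FT = TF$ gives $\tilde\gamma_{q_1-1} < T(\tilde\gamma_0)$, which together with the monotonic chain yields the displayed inequality and hence $q_1$-goodness. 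The principal delicacy — and the reason for strengthening $\varepsilon_1$ to $\varepsilon_2$ — is the bookkeeping: a single function $\varepsilon_2(\alpha, K^{-1})$ must simultaneously dominate the gap $g_0 = 1/C_1$ coming from admissible coordinates (which shrinks as $\alpha \to 0$) and the Theorem \ref{thm aflxz} estimate (whose input involves $\|Df\|^{1/\alpha}$); the monotonicity of $C_1$ and $A_0$ in their arguments makes this possible.
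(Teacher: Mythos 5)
Your proof is correct in substance but follows a genuinely different route from the paper's. The paper argues by contradiction: assuming $F^{q-1}(\tilde\gamma)\cap T\tilde\gamma\neq\emptyset$, it finds nearby points $z\in F^{q-1}(\tilde\gamma)$ and $z'\in F^{q}(\tilde\gamma)$, then pulls them back by $\phi^{-1}\circ F^{-(q-1)}$ to the left and right edges of $[0,1]^2$; the Lipschitz constant of this pullback carries a factor $K^{q(\alpha)-1}$ from the iterated inverse, which is why the paper's $\varepsilon_2$-condition reads $A_0(\cG(\alpha),K^{q(\alpha)})\leq\tfrac{1}{2}K^{-q(\alpha)}C_1(\alpha^{-1},K)^{-1}$. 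You instead establish a positive extrinsic gap $g_0\geq 1/C_1(\alpha^{-1},K)$ between $\tilde\gamma_0$ and $\tilde\gamma_1$ directly from $\phi^{-1}$, and then use $\|F^{q_1}-T\|<g_0/2$ to conclude the $g_0/2$-neighborhood of $T\tilde\gamma_0$ is disjoint from $T\tilde\gamma_1$, hence $F^{q_1}(\tilde\gamma_0)<T\tilde\gamma_1$, and a single application of $F^{-1}$ gives $\tilde\gamma_{q_1-1}<T\tilde\gamma_0$. This avoids the exponential-in-$q(\alpha)$ factor outside $A_0$, which is a genuine (if modest) improvement to the resulting arithmetic condition.

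Two points deserve to be spelled out. First, the step $g_0\geq 1/C_1$ is not immediate from the Lipschitz bound on $\phi^{-1}$ alone, since the ambient segment between two closest points of $\tilde\gamma_0$ and $\tilde\gamma_1$ is not a priori inside $\cR$, the domain of $\phi^{-1}$; one must observe that a minimizing segment between the two curves can be taken inside $\cR$ (shorten at the first exit through $\tilde\gamma_0$ or $\tilde\gamma_1$), so that integrating $D\phi^{-1}$ along it is legitimate. (The paper's Lipschitz estimate carries a similar implicit step.) Second, the passage from disjointness $F^{q_1}(\tilde\gamma_0)\cap T\tilde\gamma_1=\emptyset$ to the ordering $F^{q_1}(\tilde\gamma_0)<T\tilde\gamma_1$ should be justified explicitly (for instance: $F^{q_1}(\tilde\gamma_0)$ lies in the connected $g_0/2$-neighborhood of $T\tilde\gamma_0$, which is disjoint from $T\tilde\gamma_1$ and contains $T\tilde\gamma_0$, hence lies entirely in the left component of $\tilde\A\setminus T\tilde\gamma_1$). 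Finally, you should check that the $\varepsilon_2$ you define is increasing in both arguments as the proposition requires; this works out — since $A_0$ and $C_1$ are increasing and $K^{1/\alpha}$ decreases in both $\alpha$ and $K^{-1}$ — but is worth a sentence, as the statement of the proposition demands monotonicity.
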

\begin{proof}
Define $\varepsilon_2:\R_+^{2} \to \R_+$ as follows: $\varepsilon_2(\a,K^{-1})$ should be the minimum of 
\aryst
		\sup\Big\{ s\in(0,1/2)\ \Big|\ A_0(s, K^{q(\a)})\leq\frac{1}{2K^{q(\a)}}C_1(\a^{-1},K)^{-1}\Big\}, \quad \varepsilon_1(\a,K^{-1}).  
\earyst 
Since $\lim_{s\to 0}A_0(s,K^{q(\a)})=0$ the supremum is over a non-empty set and hence well-defined and strictly positive.   Clearly $\varepsilon_2\leq \varepsilon_1$.   
One checks that $\varepsilon_2$ is increasing because $A_0, C_1$ and $\varepsilon_1$ are increasing.   Abrieviate $K_1:=\|f\|_{\diff^{1}(\A)}$ and $K_3:=\|f\|_{\diff^{3}(\A)}$.  
From \eqref{E:assumption on G(a)} 
\ary\label{E:condition G(a)} 
						A_0(\G(\a), K^{q(\a)}_{3})<\frac{1}{K^{q(\a)}_{3}}C_1(\a^{-1},K_{3})^{-1}.  
\eary
 Let $\gamma$ be the Brouwer curve given by Lemma \ref{lem brouwer curve compactness} for $r=r_0$, and let $\tilde\gamma\subset\tilde\A$ be any lift of $\gamma$.    
Let $F$ be the lift of $f$ with $\rho(F) = \a$.   
Since $\tilde\gamma\cap F(\tilde \gamma)=\emptyset$ 
and $\rho(F) > 0$ we have $\tilde\gamma<F(\tilde \gamma)$ from considering boundary points. 
Using injectivity of $F$ and the order of boundary points we conclude successively 
$\tilde\gamma<F(\tilde \gamma)<F^2(\tilde\gamma)< \cdots < F^{i}(\tilde \gamma)$ 
for all $i\in\N$.  Thus it suffices to show that 
\ary
\label{eq rightbound}
		F^{q(\a)-1}(\tilde \gamma)<T\tilde\gamma.  
\eary
Indeed, then $F(\tilde \gamma),\ldots, F^{q(\a)-1}(\tilde \gamma)$ will all lie strictly in the region between $\tilde \gamma$ and $T(\tilde \gamma)$ 
so that $f(\gamma),\ldots, f^{q(\a)-1}(\gamma)$ will be disjoint from $\gamma$ as required.   
Arguing indirectly, suppose (\ref{eq rightbound}) is not true.   Then  
\ary
\label{eq Qgood intersection}
		F^{q(\a)-1}(\tilde \gamma)\cap T\tilde\gamma\neq\emptyset
\eary
because of the order of boundary points.   
We will now show that this implies $F^{q(\a)-1}(\tilde \gamma)$ and $F^{q(\a)}(\tilde \gamma)$ pass somewhere 
close to each other, because $F^{q(\a)}(\tilde \gamma)$ is close to $T\tilde\gamma$.   
Indeed, from \eqref{E:assumption on G(a)} and $\varepsilon_2\leq \varepsilon_1$ we have 
$\|\rho(f^{q(\a)})\|_{\R/\Z} = |q(\a)\a-1| =\a\cG(\a)<\cG(\a)\leq \varepsilon_2(\a,K_{3}^{-1})\leq \varepsilon_1(\a,K_{3}^{-1})$    
so by \eqref{eq c''issmall}, 
$\|\rho(f^{q(\a)})\|_{\R/\Z}< \varepsilon_0(K_{3}^{-\a^{-1}}) \leq \varepsilon_0(\| Df\|^{-q(\a)}) \leq \varepsilon_0(\| Df^{q(\a)}\|^{-1})$.   
Thus we can apply \eqref{lem Fidfid} to $f^{q(\a)}$ and conclude, 
\aryst
d_{H}( F^{q(\a)}(\tilde \gamma), T\tilde \gamma ) \leq   \| F^{q(\a)}- T\|  =   \| T^{-1}F^{q(\a)}- {\rm Id}\|  =  d_{C^0(\A)}(f^{q(\a)},{\rm Id})
\earyst
for the Hausdorff distance $d_H$ on compact subsets of $\tilde\A$.   So by Theorem \ref{thm aflxz}, 
\begin{equation}\label{eq Qgood Hausdorff}
	d_{H}( F^{q(\a)}(\tilde \gamma), T\tilde \gamma ) \leq  A_0(\|\rho(f^{q(\a)})\|_{\R/\Z},K_1^{q(\a)}) 
											\leq A_0(\cG(\a), K_{1}^{q(\a)}).
\end{equation}
From (\ref{eq Qgood intersection}) and (\ref{eq Qgood Hausdorff}) there exist $z\in F^{q(\a)-1}(\tilde \gamma)$, $z'\in F^{q(\a)}(\tilde \gamma)$ 
satisfying  
\ary\label{eq Qgood points}
									d(z,z')\leq A_0(\cG(\a), K_{1}^{q(\a)}).
\eary	
%
By Lemma \ref{lem brouwer curve compactness} there are admissible coordinates $\phi$ for the region bounded by $(\gamma, f(\gamma))$ so that 
$\|\phi\|_{\diff^1([0,1]^2)}\leq C_1( \a^{-1}, K_{3})$.   
Since the inverse of $F^{q(\a)-1}\circ\phi$ maps $z$ and $z'$ to points at least distance $1$ apart, we must have  
$1\leq\|D\phi^{-1}\|\| D(f^{-1})^{q(\a)-1}\|d(z,z')$.   Hence 
%
$1\leq C_1(\a^{-1}, K_{3}) K_1^{q(\a)-1}A_0(\cG(\a), K_{1}^{q(\a)})$, 
which by monotonicity contradicts \eqref{E:condition G(a)}.   
\end{proof}

\section{Smooth domain bounded by good curves}\label{S:renorm}
Throughout this section $f\in\diff^{\infty}(\A)$ denotes a pseudo-rotation with rotation number $\a$.   

\subsection{Renormalization of pseudo-rotations}\label{S:renorm def}
 Given an integer $n\in\N$, we denote by  $F_n$ \textit{the unique lift of $f^n$ to $\tilde \A$ such that $\rho(F_n) \in (0, 1)$}.  
 Suppose that $H\in\diff^\infty(\tilde\A)$ has constant Jacobian and ``pulls-back'' $F_n$ to $T$, more precisely  
 \ary\label{eq H}
						F_nH=HT.
\eary
Such an $H$ arises as follows: If $\gamma_n$ is a smooth Brouwer curve for $f^n$, and $\Omega_n\subset\tilde \A$  is the closed region bounded by $(\tilde \gamma_n,F_n(\tilde \gamma_n))$, where $\tilde \gamma_n$ is an arbitrary lift of $\gamma_n$, then by Lemma \ref{A:admissible-coordinates-modified} there exists an admissible coordinate 
$H : [0,1]^2 \to  \Omega_n$ for $(\Omega_n,  F_n)$ with respect to $f^n$, and this uniquely extends to a  $C^\infty$ diffeomorphism of $\tilde \A$ with constant Jacobian satisfying \eqref{eq H}.   

 Set $\alpha := \rho(f)  \in (0,1) \setminus \Q$, then $\rho(F_1) = \a$ and $F_n=T^{-\lfloor n\a\rfloor}F_1^n$.   
 More generally we abbrieviate 
 \aryst
 			F^{a,b} := T^{b}F_1^a \qquad \forall a,b \in \Z,
\earyst
accounting for all lifts of all powers of $f$.   In this notation $F_n = F^{n, -\lfloor n\a \rfloor}$.
Denote by $J: \tilde \A \to \tilde \A$ the orientation reversing diffeomorphism
\aryst
				J(x, y) = (-x,y).
\earyst
By \eqref{eq H} the ``pull-back'' of $F_n$ by $HJ$ is $J^{-1}TJ=T^{-1}$: 
\ary \label{eq jtjtinverse}
 (HJ)^{-1}F_n(HJ) = T^{-1}.
\eary
Note that as $F^{a,b}$ and  $F_n$ commute, 
it follows that their respective ``pull-backs'' by $HJ$ commute, i.e.\ that $(HJ)^{-1}F^{a,b}(HJ)$ commutes with $T^{-1}$ (equivalently, with $T$).    
Consequently $(HJ)^{-1}F^{a,b}(HJ)$ descends to a $C^\infty$-smooth diffeomorphism of the annulus, which we call the renormalization of $f$:

\begin{defi}\label{D:renorm}
Let $f$ be a pseudo-rotation.   Given $n\in\N$ and a diffeomorphism $H:\tilde\A\to\tilde\A$ satisfying \eqref{eq H}, we define the 
{\it renormalization of $f$} (with respect to $H$ and $(n,a,b)\in\N\times\Z\times\Z$) to be the unique diffeomorphism 
\aryst
							f^{a,b}_{H}:\A\to\A
\earyst
which lifts to 
\ary\label{eq fabh}
			F^{a,b}_H := (HJ)^{-1}F^{a,b}(HJ).  
\eary
\end{defi}

From the definition, $f^{a,b}_{H}$ is $\omega$-preserving.     
It is also a pseudo-rotation: 
\begin{lemma}\label{renorm is prot} 
	Let $f$ be a pseudo-rotation with $\rho(f) = \a$ and $n\in\N$.    Then for each $a, b \in \Z$ with $a \lfloor n\a \rfloor + b n \neq 0$, the renormalization of $f$ with respect to $H$ and 
	$(a,b)$, is also a pseudo-rotation and its rotation number is given by 
	\ary\label{E:rho of renorm} 
	\rho(f^{a,b}_H) = \left\{ \frac{ -  a \a - b }{ \{ n\a \}} \right\}, 
	\eary 
	where $\{\cdot\}$ denotes the fractional part, see \eqref{E:frac}.
\end{lemma}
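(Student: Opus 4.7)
The plan is to verify the three defining properties of a pseudo-rotation for $f^{a,b}_H$ --- area preservation, isotopy to the identity, and absence of periodic points --- and then to compute its rotation number from the definition in \eqref{eq fabh}.

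For well-definedness and structural properties: $F^{a,b}$ commutes with $F_n$ because both lie in the abelian subgroup of $\Diff(\tilde\A)$ generated by $F_1$ and $T$. Combined with \eqref{eq jtjtinverse} this gives $[F^{a,b}_H, T]=1$, so $F^{a,b}_H$ descends to $f^{a,b}_H \in \Diff(\A)$. Area preservation follows because $HJ$ has constant Jacobian, and isotopy to the identity because each of $H$, $J$, and $F^{a,b}$ preserves $\tilde B_0$ and $\tilde B_1$ setwise, so the same holds for $f^{a,b}_H$ on $B_0, B_1$; combined with orientation preservation this places $f^{a,b}_H$ in $\Diff_0(\A)$.

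For the absence of periodic points, I would suppose $(F^{a,b}_H)^k(\tilde x)=T^m(\tilde x)$ for some $k\geq 1$, $m\in\Z$. Setting $\tilde y:=HJ(\tilde x)$ and using the commutation $HJT^m = F_n^{-m}(HJ)$ (which follows from $JT=T^{-1}J$ and iteration of $HT=F_nH$), the equation becomes $(F^{a,b})^k(\tilde y) = F_n^{-m}(\tilde y)$. Substituting $F^{a,b}=T^bF_1^a$ and $F_n=T^{-\lfloor n\alpha\rfloor}F_1^n$ rearranges to
$$F_1^{ak+mn}(\tilde y) = T^{m\lfloor n\alpha\rfloor-bk}(\tilde y).$$
If $ak+mn\neq 0$ then $\pi(\tilde y)$ is periodic for $f$, contradicting the pseudo-rotation hypothesis. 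Otherwise $ak+mn=0$ forces $m\lfloor n\alpha\rfloor-bk=0$, and eliminating $m=-ak/n$ using $k\geq 1$ yields $a\lfloor n\alpha\rfloor+bn=0$, contradicting the hypothesis of the lemma.

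For the rotation number, the key idea is that $HJ$ globally conjugates $T$ to $F_n^{-1}$, so one $T$-step in the renormalized picture corresponds to one $F_n$-iterate in the original picture, which displaces by $\rho(F_n)=\{n\alpha\}$ in $p_1$. Concretely, fix $\tilde x\in\tilde\A$, set $\tilde y:=HJ(\tilde x)$, and let $m_k\in\Z$ be the nearest integer to $-k(a\alpha+b)/\{n\alpha\}$. Since $\rho(F^{a,b})=a\alpha+b$ and $\rho(F_n)=\{n\alpha\}$, the point $F_n^{m_k}(F^{a,b})^k(\tilde y)$ remains in a fixed compact strip $\tilde y + [-C,C]\times[0,1]$ as $k\to\infty$. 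The identity $(HJ)^{-1}F_n = T^{-1}(HJ)^{-1}$ iterates to $(HJ)^{-1}F_n^{m_k}=T^{-m_k}(HJ)^{-1}$, so
$$(F^{a,b}_H)^k(\tilde x) = T^{m_k}\,(HJ)^{-1}\bigl(F_n^{m_k}(F^{a,b})^k(\tilde y)\bigr),$$
where the bracketed point lies in the $(HJ)^{-1}$-image of a fixed compact set. Hence $p_1((F^{a,b}_H)^k(\tilde x))=m_k+O(1)$ and dividing by $k$ gives $\rho(F^{a,b}_H)=-(a\alpha+b)/\{n\alpha\}$. A short arithmetic check shows this is irrational precisely when $a\lfloor n\alpha\rfloor + bn\neq 0$, and the stated formula follows on taking the fractional part.

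The main obstacle is in this last step: justifying that the bounded displacement of $F_n^{m_k}(F^{a,b})^k(\tilde y)$ from $\tilde y$ survives application of $(HJ)^{-1}$, uniformly in $k$. This is precisely where the global semi-conjugacy $HT=F_nH$ is essential; it ensures that the $(HJ)^{-1}$-image of a fixed bounded strip is itself bounded, and is the only place where the specific structure of the renormalization (rather than a generic smooth change of coordinates) enters.
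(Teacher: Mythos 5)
Your argument for the absence of periodic points is correct and is essentially the paper's own: conjugate the putative relation $(F^{a,b}_H)^k(\tilde x)=T^m(\tilde x)$ back through $HJ$, collect powers of $F_1$ and $T$, and observe that the exponent of $F_1$ vanishing forces $a\lfloor n\a\rfloor+bn=0$. The structural remarks (commutation with $T$, constant Jacobian, boundary preservation) are also fine.

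The rotation number computation, however, has a gap, and you have located the ``main obstacle'' in the wrong place. The step you flag as delicate --- that $(HJ)^{-1}$ maps the strip $\tilde y+[-C,C]\times[0,1]$ to a bounded set --- is actually unproblematic: the curves $F_n^j(\tilde\gamma_n)$ bound fundamental domains of equal area, so a strip of width $2C$ meets at most $O(C/\{n\a\})$ of them, and the semi-conjugacy $HT=F_nH$ does the rest. The genuine gap is one step earlier: the claim that $F_n^{m_k}(F^{a,b})^k(\tilde y)$ stays within a fixed distance of $\tilde y$ for all $k$. You justify this only by the fact that $\rho\big(F_n^{m_k}(F^{a,b})^k\big)$ is bounded (by $\{n\a\}/2$), but for a single iterate of a pseudo-rotation evaluated at an \emph{arbitrary interior} point, a small rotation number does not imply a small displacement: this is a bounded-deviation statement, and the only quantitative $C^0$ control available (Theorem \ref{thm aflxz}) involves $\|D f^{N_k}\|$, which blows up with $k$. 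The standard repair --- and the reason the paper's proof says ``for example a point on the boundary'' --- is to take $\tilde x$, hence $\tilde y=HJ(\tilde x)$, on $\tilde B_1$. There $F_n^{m_k}(F^{a,b})^k$ restricts to a lift of an orientation-preserving circle homeomorphism, for which the classical inequality $|p_1(G(\tilde z))-p_1(\tilde z)-\rho(G)|<1$ gives the uniform bound you need. Since the first half of your proof already shows $f^{a,b}_H$ is a pseudo-rotation, its rotation number is independent of the point by Franks--Handel, so computing it at a boundary point suffices (and irrationality is then automatic, so your final arithmetic check, while correct, is not needed).
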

\begin{proof}
	 We first show that $f^{a,b}_H$ has no periodic points.
	Assume to the contrary there are integers $p \in \Z$, $q > 0$ and some $z \in \tilde\A$
    such that $(F^{a,b}_H)^q(z) = T^p(z)$.   
    Then by \eqref{eq jtjtinverse} and \eqref{eq fabh} we have
	\ary \label{eq periodicpoint}
	z  = (F^{a,b}_H)^q T^{-p}(z) = (HJ)^{-1} F_1^{(qa + pn)} T^{ (qb - p\lfloor n\a \rfloor ) }  H J(z).
	\eary
	However, \eqref{eq periodicpoint} and the condition $a \lfloor n\a \rfloor + bn \neq 0$ implies that $HJ(z)$ descends to a perodic point for $f$, which is impossible as $f$ is a 		pseudo-rotation.   Hence $f^{a,b}_H$ is a pseudo-rotation in $\diff^\infty(\A, \omega)$.
	
	To compute $\rho(f^{a,b}_H)$, it suffices to study the trajectory of an arbitrary $z \in \tilde\A$ under the iterates of $F^{a,b}_H=(HJ)^{-1}T^{b}F_1^a(HJ)$, for example a point on the boundary.    A standard argument leads to $\rho(F^{a,b}_H)=-\rho(F^{a,b})/\rho(F_n)=-(a\a+b)/\{n\a\}$, from which the expression \eqref{E:rho of renorm} follows.  
	\end{proof}

\begin{lemma}\label{L:bounds on renorm}
	Let $f$ be a pseudo-rotation with $\rho(f) = \a$, and suppose $n,k\in\N$ with $n<q_k$.    Suppose $H\in\diff^\infty(\tilde\A)$ satisfies \eqref{eq H}.   
	Then for all $a,b\in\Z$, if $q_{k+1}$ is sufficiently large compared with $q_k,\, \|H\|_{C^1([0,1]^2)}$,  
	$\|f\|_{\diff^1([0,1]^2)}$, $a,\,b$, then 
	for each $r\in\N$ the renormalization $f^{a,b}_{H}$ satisfies 
	\ary\label{E:bounds on renorm}
			\ \ \ \norm{f^{a,b}_H}_{\diff^r(\A)} \leq E_r\Big(\|f\|_{\diff^r(\A)},\,\|H\|_{\diff^r([0,1]^2)},\,q_k,\,a,\,b\Big), 
	\eary
 	for some increasing function $E_r :\R_+^{ 2}\times \N^3 \to \R_+$ that depends only on $r$.  
\end{lemma}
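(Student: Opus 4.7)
The plan is to bound $\|F^{a,b}_H\|_{C^r([0,1]^2)}$ via the chain rule applied to the factorization $F^{a,b}_H = (HJ)^{-1}\circ T^b\circ F_1^a\circ(HJ)$; by $T$-equivariance this equals $\|f^{a,b}_H\|_{C^r(\A)}$, and the analogous argument for $(F^{a,b}_H)^{-1} = (HJ)^{-1}\circ F_1^{-a}\circ T^{-b}\circ(HJ)$ handles the inverse, producing the full $\diff^r$-norm.

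Two of the three factors are easily estimated. For the outer $HJ$, using $HT^{-1} = F_n^{-1}H$ from \eqref{eq H}, one has $H|_{[-1,0]\times[0,1]} = F_n^{-1}\circ H|_{[0,1]^2}\circ T$; iterating the chain rule on $F_n^{-1} = T^{\lfloor n\alpha\rfloor}F_1^{-n}$ bounds $\|HJ\|_{C^r([0,1]^2)}$ in terms of $\|H\|_{C^r([0,1]^2)}$, $\|f\|_{\diff^r(\A)}$, and $n\leq q_k-1$. Similarly, iterating the chain rule yields $\|F_1^a\|_{C^r(\tilde\A)}$ bounded in terms of $\|f\|_{\diff^r(\A)}$ and $|a|$.

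The real work, and the main obstacle, is to bound $\|(HJ)^{-1}\|_{C^r(R)}$ where $R := T^bF_1^aHJ([0,1]^2)$. Since $HJ([0,1]^2) = F_n^{-1}\Omega_n$, the set $R$ is bounded, with diameter and horizontal location controlled by the $C^1$-norms of $H$ and $F_1^a$ together with $|a|$ and $|b|$. To propagate control of $H^{-1}$ from the fundamental domain $\Omega_n$ (where $\|H^{-1}\|_{C^r(\Omega_n)}\leq\|H\|_{\diff^r([0,1]^2)}$) to $R$, we use the intertwining $H^{-1}\circ F_n^j = T^j\circ H^{-1}$, which gives on each tile
\aryst
H^{-1}|_{F_n^j\Omega_n} \;=\; T^j\circ H^{-1}|_{\Omega_n}\circ F_n^{-j}.
\earyst
The tiles $\{F_n^j\Omega_n\}_{j\in\Z}$ partition $\tilde\A$; since $\|n\alpha\|_{\R/\Z}\geq\beta_{k-1}\geq 1/(2q_k)$ for $1\leq n<q_k$ by the best-approximation property, each tile has horizontal extent at least on this order, so $R$ intersects at most $N$ tiles with $N$ controlled by $q_k$ and the other parameters. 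Iterating the chain rule on $F_n^{-j}$ for $|j|\leq N$ then bounds $\|H^{-1}\|_{C^r(R)}$ in terms of $\|H\|_{\diff^r}$, $\|f\|_{\diff^r}$, $q_k$, $|a|$, and $|b|$.

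Combining these three estimates via the chain rule, and performing the symmetric computation for the inverse, yields the required increasing function $E_r$. The role of the hypothesis ``$q_{k+1}$ sufficiently large in terms of $q_k, \|H\|_{C^1}, \|f\|_{\diff^1}, a, b$'' is a technical threshold ensuring that the above estimates on the location of $R$ and the number $N$ of tiles remain controlled purely by the arguments of $E_r$, without $\beta_k$ or $q_{k+1}$ appearing in the bound.
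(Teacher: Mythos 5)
Your overall strategy coincides with the paper's: factor $F^{a,b}_H=(HJ)^{-1}\circ F^{a,b}\circ (HJ)$, control the inner factors directly, and control $(HJ)^{-1}$ on the image set $R$ by using the intertwining $H^{-1}\circ F_n^j=T^j\circ H^{-1}$ to propagate the bound on $H$ from $[0,1]^2$ to the tiles $F_n^j(\Omega_n)$ that $R$ meets. The gap is in the tile-counting step. From ``each tile has horizontal extent at least of order $1/(2q_k)$'' it does \emph{not} follow that a connected set of bounded diameter meets boundedly many tiles: the tiles are disjoint, but each one can reach into a small ball with a thin sliver, so arbitrarily many tiles, each of large horizontal extent, can meet a set of diameter $D$. (Separately, the horizontal-extent claim itself is not a consequence of the best-approximation inequality alone; it requires knowing that each tile has area $\rho(F_n)=\{n\a\}\geq\beta_{k-1}$ and sits inside $\R\times[0,1]$ — but that is the lesser problem.) What is actually needed is that the curves $F_n^j(\tilde\gamma_n)$ advance to the right at a definite rate over the relevant range of $j$: concretely, that $F_n^{N}(\tilde\gamma_n)>F^{a,b}(\tilde\gamma_n)$ for some $N$ controlled by $q_k$, $\|H\|_{C^1}$, $\|f\|_{\diff^1}$, $a$, $b$, which is \eqref{D:L 2}.

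This is exactly where the hypothesis that $q_{k+1}$ be large enters, and it is not a removable ``technical threshold.'' The paper takes $N=q_kN_0$ with $N\rho(F_n)$ exceeding a bound on how far $F^{a,b}$ displaces $\tilde\gamma_n$, and then applies Theorem \ref{thm aflxz} to $f^{nN}$, whose rotation number satisfies $\norm{\rho(f^{nN})}_{\R/\Z}\leq N/q_{k+1}$; only when $q_{k+1}$ is large compared with $q_k$, $\|H\|_{C^1}$, $\|f\|_{\diff^1}$, $a$, $b$ is $f^{nN}$ guaranteed to be $C^0$-close to the identity, so that $F_n^{N}$ lies uniformly within $1/2$ of the translation $T^{\lfloor N\rho(F_n)\rfloor}$ and the desired ordering of curves follows. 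Without this input the curves $F_n^j(\tilde\gamma_n)$ could cluster near $R$ for a range of indices $j$ not controlled by the arguments of $E_r$, and your bound on $\|H^{-1}\|_{C^r(R)}$ — which involves $\|f^{nj}\|_{\diff^r}$ for every tile index $j$ met by $R$ — would not close. You should replace the horizontal-extent heuristic by this quantitative localization.
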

\begin{proof}  
Fix $a,b\in\Z$.    We claim that if $q_{k+1}$ is sufficiently large compared with $q_k,\, \|H\|_{C^1([0,1]^2)}$, $\|f\|_{\diff^1([0,1]^2)}$, $a,\,b$, then 
	\ary\label{D:L}
				F^{a,b}_{H}([0,1]^2) \subset [-N,N+1]\times[0,1],     
	\eary
	for some $N\in\N$ bounded by a function of $q_k,\, \|H\|_{C^1([0,1]^2)}$, $\|f\|_{C^1(\A)}$, $a,\,b$. 
To prove this claim we abbrieviate $\tilde\gamma=\tilde\gamma_n$ and find $N\in\N$ satisfying
\ary\label{D:L 2}
				(F^{a,b}_{H})^N(\tilde\gamma)> F^{a,b}(\tilde\gamma), 
\eary
since a short computation shows that this implies \eqref{D:L}.    

Since $\tilde\gamma$ is parameterised by 
$H$ on $\{0\}\times[0,1]$, the length satisfies 
$\ell(\tilde\gamma)\leq L_0$ 
where $L_0\in\N$ is bounded by $\|H\|_{C^1([0,1]^2)}$.    
 Note that there exists $M\in\N$ bounded in terms of $L_0,\,\|f\|_{C^1(\A)}$, $a,\,b$, for which 
\ary\label{D: M}
					T^M(\tilde\gamma) > F^{a,b}(\tilde\gamma).
\eary
Indeed, this follows easily from the fact that the length of $F^{a,b}(\tilde\gamma)=T^bF_1^a(\tilde\gamma)$ is bounded by $\|Df\|^a_{C^0(\A)}$, 
and $F^{a,b}$ moves a point on the boundary at most $\lceil\rho(F^{a,b})\rceil=\lceil a\a+b\rceil$.   This proves \eqref{D: M}.    Now set 
$N:=q_kN_0$, 
where $N_0\in\N$ is sufficiently large that 
\ary\label{E:lower bound N}
				\rho(F_n)^N > M + L_0 +1.   
\eary
Since $\rho(F_n^N)=N\{n\a\}$, we can assume that $N$ is bounded by a function of $M, L_0$ and $1/\{n\a\}\leq 1/\{q_{k-1}\a\}=1/\beta_{k-1}\leq 2q_k$.   By Theorem \ref{thm aflxz}
\ary\label{E:C0 bound on f^nN}
		d_{C^0(\A)}(f^{nN},\id)< A_0\big(\norm{\rho(f^{nN})}_{\R / \Z},\,  \norm{D(f^{nN})}\big).  
\eary
Since $\norm{\rho(f^{nN})}_{\R / \Z}\leq nN_0\norm{q_k\a}_{\R / \Z}=nN_0\beta_k\leq nN_0/q_{k+1}\leq N/q_{k+1}$, while  $\norm{D(f^{nN})}\leq \norm{f}^{q_kN}_{C^1(\A)}$ 
it follows that the right hand side of \eqref{E:C0 bound on f^nN} is less than $1/2$, if $q_{k+1}$ is sufficiently large compared with $q_k, \norm{f}_{C^1(\A)}$ and $N$, i.e.\ 
compared with $q_k, \norm{f}_{C^1(\A)}, M, L_0$.   In this case, by Lemma \ref{lem:lift} 
\ary\label{E:N and P}
		\|T^{-P}(F_n)^{N}-\id_{\tilde\A}\|_{C^0(\tilde\A)}<1/2, 
\eary
where $P:=\lfloor\rho(F_n)^N\rfloor$, because $T^{-P}(F_n)^{N}$ is the unique lift of $f^{nN}$ with rotation number in $(-1/2,1/2)$.   Indeed,  by choice of $P$, 
$T^{-P}(F_n)^{N}$ has rotation number in $(0,1)$, but moreover it lies in $(0,1/2)$ because the fractional part is 
$\{nN\a\}=\{nN_0q_k\a\}\leq nN_0\{q_k\a\}\leq N/q_{k+1}$ which without loss of generality is also less than $1/2$.    
From \eqref{E:N and P} 
$d_{H}\big((F_n)^{N}(\tilde\gamma),\, T^{P}(\tilde\gamma)\big) < 1/2$ so 
\aryst
							(F_n)^{N}(\tilde\gamma) > T^{P-L_0-1}(\tilde\gamma)> T^{M}(\tilde\gamma)
\earyst 
using the definition of $P$ and \eqref{E:lower bound N} for the second inequality.   Combining this with \eqref{D: M} gives \eqref{D:L 2} under the mentioned largeness assumptions on  
 $q_{k+1}$.    Since $M$ is bounded in terms of $L_0,\|f\|_{C^1(\A)}$, $a,\,b$, while $L_0$ is bounded by $\|H\|_{C^1([0,1]^2)}$, 
this proves \eqref{D:L 2} and thus \eqref{D:L} as claimed.   

Now we prove \eqref{E:bounds on renorm}.    
From the definitions $\|f^{a,b}_H\|_{\diff^r(\A)}=\|H^{-1}F^{a,b}H\|_{\diff^r([0,1]^2)}$ is bounded in terms of $\|H\|_{\diff^r([0,1]^2)}$, $\|F^{a,b}\|_{\diff^r(H([0,1]^2))}\leq\|f\|^a_{\diff^r(\A)}$, 
and 
\aryst
	 \|H^{-1}\|_{\diff^r(F^{a,b}H([0,1]^2))}=\|H\|_{\diff^r(H^{-1}F^{a,b}H([0,1]^2))}\leq \|H\|_{\diff^r([-N,N+1]\times[0,1])}
\earyst 
using \eqref{D:L}.   So it remains to bound $\|H\|_{\diff^r([-N,N+1]\times[0,1])}$.     
For each $j\in\Z$, $\|H\|_{\diff^r([j,j+1]\times[0,1])}=\|HT^j\|_{\diff^r([0,1]^2)}=\|F_n^jH\|_{\diff^r([0,1]^2)}$ 
is bounded by $\|H\|_{\diff^r([0,1]^2)}$ and 
$\|F_n^j\|_{\diff^r(\tilde\A)}=\|f^{nj}\|_{\diff^r(\A)}$.     Therefore $\|H\|_{\diff^r([-N,N+1]\times[0,1])}$ is bounded in terms of $\|f\|^{nN}_{\diff^r(\A)}$ and $\|H\|_{\diff^r([0,1]^2)}$.   
Putting this all together, and inserting the bounds on $N$, we arrive at \eqref{E:bounds on renorm}.  
\end{proof}

\subsection{Finding a good Brouwer curve}\label{SS:finding brouwer curve}
We will consider a renormalization of the form $f^{a,b}_{H}$ where $(a,b)$ is in the best approximation sequence $(q_j,p_j)_{j\in\N_0}$.    

Consider $n\in\N$ odd, so that $q_{n-1} \a - p_{n-1} >0$.   Suppose that 
$\sigma\subset\A$ is a smooth Brouwer curve for $f^{q_{n-1}}$, that $\tilde\sigma\subset\tilde\A$ is an arbitrary lift, and that $H:[0,1]^2\to\Omega$ is an admissible coordinate for $F_{q_{n-1}}:=F^{q_{n-1},-p_{n-1}}$, where $\Omega\subset\tilde\A$ is the closed region bounded by $(\tilde\sigma,F_{q_{n-1}}(\tilde\sigma))$.     We also denote by $H\in\diff^\infty(\tilde\A)$ the unique smooth extension satisfying  
\ary\label{eq H again}
						F_{q_{n-1}}H=TH,   
\eary
which of course also has constant Jacobian everywhere.   We denote by 
\ary\label{D:renorm again}
		f^{q_{n},-p_{n}}_{H}:\A\to\A
\eary
the renormalization of $f$ with respect to $H$ and $(q_{n-1},q_{n},-p_{n})$.   That is, $f^{q_{n},-p_{n}}_{H}$ is the descendent to the annulus of 
\ary\label{D: Stilde}
						\tilde S:=(HJ)^{-1}F^{q_{n},-p_{n}}(HJ):\tilde\A\to\tilde\A. 
\eary 
 (So $q_{n-1}$ is playing the role that $n$ did in Section \ref{S:renorm def}, while $(q_{n},-p_{n})$ are playing the 
role of $(a,b)$.)   
By Lemma \ref{renorm is prot} $f^{q_{n},-p_{n}}_{H}$ is a pseudo-rotation with 
\ary\label{D: renorm again}
							\rho\big(f^{q_{n},-p_{n}}_{H}\big)=\a_{n} \in (0, 1/2).   
\eary 
The next result shows that a Brouwer curve for $f^{q_{n},-p_{n}}_{H}$ gives rise to a Brouwer curve for $f$ that is also a Brouwer curve for many higher iterates of $f$.     
 
\begin{prop}\label{T:brouwer for renorm}
Fix $r\in\N$ and $n\in\N_{\mathrm{odd}}$.  Suppose that the renormalization $f^{q_{n},-p_{n}}_{H}$ described above, has an $a_{n+1}$-good Brouwer curve $\gamma'$.   Then: 
\begin{enumerate}
\item $\gamma'$ pushes forward via $HJ$, in the following sense, to a $q_{n+1}$-good Brouwer curve $\gamma$ for $f$.  
Namely, take any lift $\tilde\gamma'\in\tilde\A$ of $\gamma'$, then 
set $\tilde\gamma:=HJ\tilde\gamma'\in\tilde\A$ and 
set $\gamma:=\pi\tilde\gamma$.  In particular $\gamma$ is a Brouwer curve for $f^{q_{n}}$.  
\item If $\Phi'$ is an admissible coordinate for the closed region bounded by  
$\big(\gamma',f^{q_{n},-p_{n}}_{H}(\gamma')\big)$ with respect to $f^{q_{n},-p_{n}}_{H}$, then the region $\big(f^{q_{n}}(\gamma),\,\gamma\big)$ 
has admissible coordinates $\Phi$, with respect to the map $f^{-q_{n}}$, satisfying 
\begin{equation}\label{E:bound Phi}
	\|\Phi\|_{\diff^r([0,1]^2)}\leq G_{r}\Big( \|\Phi'\|_{\diff^r([0,1]^2)},\,\|H\|_{\diff^r([0,1]^2)},\,\|f\|_{\diff^r(\A)},\, q_{n-1}\Big)
\end{equation}
for some increasing function $G_{r}:\R^3_+\times\N\to\R^+$ depending only on $r$.   
\end{enumerate}
\end{prop}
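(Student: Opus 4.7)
The plan is to exploit an algebraic correspondence between orbits of $\tilde\gamma$ under $\langle F_1, T\rangle$ and orbits of $\tilde\gamma'$ under $\langle \tilde S, T\rangle$ induced by conjugation by $HJ$. Since $F^{q_{n-1}, -p_{n-1}} = HTH^{-1}$ by \eqref{eq H again} and $JTJ^{-1} = T^{-1}$, conjugation by $(HJ)^{-1}$ sends $F^{q_n, -p_n}$ to $\tilde S$ and $F^{q_{n-1}, -p_{n-1}}$ to $T^{-1}$. The identity $p_n q_{n-1} - p_{n-1} q_n = 1$ (\eqref{relation pn qn} for $n$ odd) then shows these two elements generate all of the abelian group $\langle F_1, T\rangle\cong\Z^2$, giving the master identity
\[
HJ\bigl(T^{k}\tilde S^{j}(\tilde\gamma')\bigr)\;=\;F^{jq_n - kq_{n-1},\; kp_{n-1} - jp_n}(\tilde\gamma) \qquad\forall\,(j,k)\in\Z^2,
\]
with $\tilde\gamma := HJ(\tilde\gamma')$. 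Since $HJ$ is a global diffeomorphism of $\tilde\A$ preserving $\partial\tilde\A$, $\gamma = \pi(\tilde\gamma)$ is automatically a smooth simple curve from $B_0$ to $B_1$.

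For part (1), the $q_{n+1}$-good property is equivalent to $\tilde\gamma \cap T^\ell F_1^i(\tilde\gamma) = \emptyset$ for every $1 \leq i \leq q_{n+1}-1$ and $\ell \in \Z$. Via the master identity this becomes $\tilde\gamma' \cap \tilde S^{J'}T^{K'}(\tilde\gamma') = \emptyset$ with $J' = -p_{n-1}i - q_{n-1}\ell$ and $K' = -p_ni - q_n\ell$, which is provided by the $a_{n+1}$-good hypothesis on $\gamma'$ whenever $|J'|\leq a_{n+1}-1$ and $(J',K')\neq(0,0)$. The combinatorial-geometric heart of the argument is to check that for $(i,\ell)$ with $1\leq i\leq q_{n+1}-1$ and such that $\tilde\gamma$ and $T^\ell F_1^i(\tilde\gamma)$ can actually intersect in $\tilde\A$, one has $|J'|\leq a_{n+1}-1$. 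Here two ingredients combine: the horizontal displacement from $\tilde\gamma$ to $F^{i,\ell}(\tilde\gamma)$ equals $\ell + i\alpha = -(k\beta_{n-1} + j\beta_n)$ (using $q_{n-1}\alpha - p_{n-1} = \beta_{n-1}$ and $q_n\alpha - p_n = -\beta_n$ for $n$ odd); and the $x$-width of $\tilde\gamma = HJ(\tilde\gamma')$ is of order $\beta_{n-1}$ because $H([m,m+1]\times[0,1]) = F_{q_{n-1}}^m(\Omega)$ is an $F_{q_{n-1}}$-fundamental domain with horizontal extent comparable to the rotation number $\beta_{n-1}$ of $F_{q_{n-1}}$. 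Intersection therefore forces $|k + j\alpha_n|$ to be $O(1)$, and combined with $i = jq_n - kq_{n-1}$ and the Legendre-type identity $q_n\beta_{n-1} + q_{n-1}\beta_n = 1$ (equivalently $q_n + \alpha_n q_{n-1} = 1/\beta_{n-1}$), the bound $1 \leq i \leq q_{n+1}-1$ restricts $|j|$ to the required range. The remaining boundary cases (where $|j|$ threatens to equal $a_{n+1}$) are closed out using Theorem \ref{thm aflxz} applied to $f^{q_{n+1}}$: the smallness of $\|\rho(f^{q_{n+1}})\|_{\R/\Z} = \beta_{n+1}$ makes $F_{q_{n+1}}$ very close to the identity, which excludes the borderline intersections.

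For part (2), I would construct $\Phi$ by transporting $\Phi'$ through $HJ$ with a reflection in the first coordinate that compensates for the orientation reversal produced by $J$:
\[
\Phi(x,y)\;:=\;\pi\circ HJ\circ \tilde\Phi'(1-x,\,y),
\]
where $\tilde\Phi'$ is the canonical lift of the admissible coordinate $\Phi'$ to a neighborhood of $[0,1]^2$ in $\tilde\A$ with $\tilde\Phi'(\{0\}\times[0,1]) = \tilde\gamma'$. Using the master identity, $\Phi(\{0\}\times[0,1]) = \pi HJ(\tilde S(\tilde\gamma')) = \pi F^{q_n,-p_n}(\tilde\gamma) = f^{q_n}(\gamma)$, $\Phi(\{1\}\times[0,1]) = \pi(\tilde\gamma) = \gamma$, the images of the top and bottom edges lie in $B_1$ and $B_0$ respectively, and the Jacobian is constant and positive (product of three constant Jacobians with two sign flips). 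The commutation $f^{-q_n}\Phi = \Phi T$ near $\{0\}\times[0,1]$ follows from the analogous property of $\tilde\Phi'$ together with $HJ \tilde S^{-1}(HJ)^{-1} = F^{-q_n, p_n}$ and $\pi F^{-q_n, p_n} = f^{-q_n}\pi$. The bound \eqref{E:bound Phi} is then obtained from the composition rules of \ref{SS:compositions}: the main step is to control $\|HJ\|_{\diff^r(\tilde\Phi'([0,1]^2))}$, which reduces to controlling $\|H\|_{\diff^r}$ on at most $N$ consecutive $T$-translates of $[0,1]^2$, with $N$ bounded by a function of $\|\tilde\Phi'\|_{\diff^r([0,1]^2)}$; the identity $HT^m = F_{q_{n-1}}^m H$ then gives the desired bound in terms of $\|H\|_{\diff^r([0,1]^2)}$, $\|f\|_{\diff^r(\A)}$, and $q_{n-1}$, exactly as in Lemma \ref{L:bounds on renorm}.

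The main obstacle, as anticipated, is the combinatorial-geometric step at the end of part (1): the $a_{n+1}$-good hypothesis gives disjointness only on the window $|J'|\leq a_{n+1}-1$, whereas the natural arithmetic allows $|J'|$ to reach $a_{n+1}$ at the boundary of the range $1\leq i\leq q_{n+1}-1$. Closing this gap requires combining the Legendre-type identity $q_n\beta_{n-1} + q_{n-1}\beta_n = 1$ with the $C^0$-smallness of $F_{q_{n+1}}$ supplied by Theorem \ref{thm aflxz}. Part (2), by contrast, is comparatively routine once the reflection-based construction of $\Phi$ is in place.
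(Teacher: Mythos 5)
Your part (2) is essentially the paper's construction and is fine: one lifts $\Phi'$ to $\tilde\Phi'$, transports it through $HJ$, checks the boundary and commutation conditions via the conjugation identities, and bounds the result by controlling $\|H\|_{\diff^r}$ on the $N$ relevant $T$-translates of $[0,1]^2$ using $HT^m=F_{q_{n-1}}^mH$, with $N$ controlled by $\|\Phi'\|_{\diff^1}$. Your reflection $(x,y)\mapsto(1-x,y)$ is a harmless normalization (it correctly restores orientation and the left/right edge convention). One small point you should state explicitly: you need $\pi$ to be injective on $HJ(\tilde{\cR}')$, which requires proving $T^{-1}(\tilde\gamma)<F^{q_n,-p_n}(\tilde\gamma)<\tilde\gamma$; this follows from the Brouwer and simplicity properties of $\gamma'$ pushed through $HJ$.

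Part (1), however, has a genuine gap, located exactly where you place the ``combinatorial-geometric heart.'' First, the claim that the $x$-width of $\tilde\gamma=HJ(\tilde\gamma')$ is of order $\beta_{n-1}$ because a fundamental domain of $F_{q_{n-1}}$ has ``horizontal extent comparable to $\beta_{n-1}$'' is false in general: $\Omega$ is bounded by a Brouwer curve $\tilde\sigma$ for $f^{q_{n-1}}$ and its image, and $\tilde\sigma$ may oscillate horizontally by an amount controlled only by $\|H\|_{C^1}$, not by $\beta_{n-1}$; only the boundary endpoints are spaced $\sim\beta_{n-1}$ apart, and on average at that. The impossibility of Denjoy-type geometric control of these domains is precisely the difficulty the authors emphasize in the introduction. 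Consequently ``intersection forces $|k+j\alpha_n|=O(1)$'' carries a constant depending on the geometry of $H$, and the needed conclusion $|j|\leq a_{n+1}-1$ does not follow. Second, your fix for the borderline cases invokes Theorem \ref{thm aflxz} for $f^{q_{n+1}}$, which is only useful when $\beta_{n+1}$ is small relative to $\|Df\|^{-q_{n+1}}$ --- an arithmetic condition on $q_{n+2}$ that is not among the hypotheses of the Proposition, which is a purely combinatorial statement with no Liouville-type assumption.

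The paper's proof of (1) needs no quantitative input at all: assuming $F^{k,p}(\tilde\gamma)\cap\tilde\gamma\neq\emptyset$ with $1\leq k\leq q_{n+1}-1$, one chooses $k\a+p$ extremal (maximal if negative, minimal if positive) among all intersecting pairs. The extremality, the ordering $F^{q_n,-p_n}(\tilde\gamma)<\tilde\gamma<F^{q_{n-1},-p_{n-1}}(\tilde\gamma)$, and standard continued-fraction facts then force $(k,p)$ to be a closest return of the form $(iq_n,-ip_n)$ or $(q_{n-1}+iq_n,-p_{n-1}-ip_n)$ with $i$ in exactly the range covered by the $a_{n+1}$-goodness of $\gamma'$, giving the contradiction. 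To complete your argument you would need to replace the width estimate by a descent of this kind; as written, part (1) is not proved.
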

\begin{proof}	
	We abbrieviate 
	\aryst
		S_0 := F^{q_{n-1}, -p_{n-1}},\qquad S := F^{q_{n}, -p_{n}}.  
	\earyst
	Thus \eqref{eq H again} and \eqref{eq jtjtinverse} become respectively  
	\aryst
	H T = S_0 H,\ \qquad (HJ)^{-1}S_0(HJ) = T^{-1}, 
	\earyst
	while \eqref{D: Stilde} becomes $\tilde S := (HJ)^{-1}S(HJ)$.   
	Note that $\tilde S$ commutes with $T$, and descends to $f^{q_{n}, -p_{n}}_H$. Moreover $\rho(\tilde S) = \rho(f^{q_{n}, -p_{n}}_H) = \a_{n} \in (0, \frac{1}{2})$.

\smallskip

{\bf Proof of Item (2):}     By assumption there exists an admissible coordinate $\Phi':[0,1]^2\to\cR'$, with respect to the renormalized 
map $f^{q_{n},-p_{n}}_{H}$, where $\cR'\subset\A$ is the closed region bounded by $\big(\gamma',f^{q_{n},-p_{n}}_{H}(\gamma')\big)$.   
This lifts via $\pi$ to an admissible coordinate $\tilde{\Phi}'$ for the region 
$\tilde{\cR}'\subset\tilde\A$ bounded by $\big(\tilde{\gamma}',\tilde S(\tilde{\gamma}')\big)$, 
with respect to $\tilde S$.   Without loss of generality we may choose $\tilde\Phi'$ 
so that 
\ary\label{D:N}
						\tilde\Phi'([0,1]^2)\subset [-N,N+1]\times[0,1]
\eary
where $N\leq \|\Phi'\|_{C^1([0,1])}$, for example if we arrange that $\tilde{\Phi}'(0,0)\in[0,1]^2$.   
In any case, $HJ\circ\tilde{\Phi}':[0,1]^2\to\tilde\A$ has image 
$\tilde{\cR}:=HJ(\tilde{\cR}')$ bounded by $\big(S(\tilde{\gamma}),\tilde{\gamma}\big)$, and defines an admissible coordinate for $S^{-1}$.   
Hence, 
\ary\label{coords from renorm}
			\Phi:=\pi\circ HJ\circ\tilde{\Phi}':[0,1]^2\to \cR
\eary
is a diffeomorphism onto $\cR:=\pi(\tilde\cR)\subset\A$ bounded by $\big(f^{q_{n-1}}(\gamma),\gamma\big)$, where $\gamma:=\pi\tilde\gamma$, and defines an admissible 
coordinate for $f^{-q_{n-1}}$.    For this last step we must check that $\pi$ restricts to a diffeomorphism from $\tilde{\cR}$ to $\cR$.   For this it suffices to show that 
\ary\label{E:Sgamma}
					T^{-1}(\tilde\gamma)<S(\tilde\gamma)<\tilde\gamma, 
\eary
which we now prove.   Since $\gamma'\subset\A$ is a Brouwer curve for $f^{q_{n},-p_{n}}_{H}$, 
the lift $\tilde\gamma'$ satisfies $\tilde S(\tilde\gamma')\cap T^{k}(\tilde\gamma')=\emptyset$ for all $k\in\Z$.   Applying $HJ$ with $k=0$ gives 
\ary\label{E:condition 1 gamma}
					S(\tilde\gamma)\cap \tilde\gamma=\emptyset.  
\eary
Since $\gamma'\subset\A$ has no self-intersections, the lift $\tilde\gamma'$ also satisfies $\tilde\gamma'\cap T^{j}(\tilde\gamma')=\emptyset$ for all $j\in\Z\backslash\{0\}$, so applying $HJ$ with $j=-1$ gives 
\ary\label{E:condition 2 gamma}
					\tilde\gamma\cap S_0(\tilde\gamma)=\emptyset.  
\eary
Combining \eqref{E:condition 1 gamma} and \eqref{E:condition 2 gamma} with $\rho(S_0)=q_{n-1}\a-p_{n-1}>0$ and $\rho(S)=q_{n}\a-p_{n}<0$ gives 
    \ary \label{eq orderofss0gamma'}
    						S(\tilde\gamma) < \tilde\gamma < S_0(\tilde\gamma).
    \eary
This proves the right inequality in \eqref{E:Sgamma}.    By definition and by \eqref{relation pn qn}, we have $S^{q_{n}}_0S^{-q_{n-1}}=T$.  Combining with \eqref{eq orderofss0gamma'}, leads to $T^{-1}(\gamma') < S(\gamma')$, completing the proof of \eqref{E:Sgamma}.  

Hence $\Phi$ in \eqref{coords from renorm} is an admissible coordinate and it remains to show \eqref{E:bound Phi}.   
From \eqref{coords from renorm} and \eqref{D:N}, $\|\Phi\|_{\diff^r([0,1]^2)}$ is controlled by $\|\Phi'\|_{\diff^r([0,1]^2)}$ and 
$\|H\|_{\diff^r(\tilde\Phi'([0,1]^2)}\leq \|H\|_{\diff^r([-N,N+1]\times[0,1])}$.   
Arguing as at the end of Lemma \ref{L:bounds on renorm}, the latter is bounded in terms of $\|H\|_{\diff^r([0,1]^2)}$ 
and $\|f^{q_{n-1}}\|^N_{\diff^r(\A)}$.    Since $N\leq \|\Phi'\|_{\diff^1([0,1]^2)}$, we arrive at \eqref{E:bound Phi}, proving Item (2). 

\smallskip

{\bf Proof of Item (1):} 
It remains to prove that $ \gamma$ is a $q_{n+1}$-good Brouwer curve for $f$.   Equivalently, that 
	$F^{k,p}(\tilde\gamma)\cap\tilde\gamma=\emptyset$ for all $k\in\{1,\ldots,q_{n+1}-1\}$ and all $p\in\Z$, 
	where in the notation from Section 4.1, $F^{k,p}=F_1^kT^p$, where $F_1$ is the unique lift of $f$ with rotation number in $(0,1)$.   
	Arguing indirectly, we find $p\in\Z$ and $k\in\{1,\ldots,q_{n+1}-1\}$ so that 
	\ary\label{intersect 1}
				\qquad	F^{k,p}(\tilde\gamma)\cap\tilde\gamma\neq\emptyset.  
	\eary   
	To get a contradiction we consider two cases separately: 
	\smallskip
	
	{\bf Case I:} \ 
	In which $\rho(F^{k,p})< 0$, i.e.\ $k\a + p < 0$.     
	\medskip
	
	Without loss of generality we can assume that $k\alpha+p$ is maximal with these properties.   In other words, 
	that $F^{k', p'}( \tilde\gamma) \cap  \tilde\gamma = \emptyset$ for all $p'\in\Z$ and all $k'\in\{1,\ldots,q_{n+1}-1\}$ satisfying $k\alpha+p<k'\alpha+p'<0$. 
	\smallskip

	$\bullet$
	By the maximality condition we can reduce to the following two subcases:   Either 
	\ary\label{case 1a}
				  k \geq q_{n+1} - q_{n-1}
	\eary
	or 
	\ary\label{case 1b}
				p_{n-1}-q_{n-1} \a  < k \a + p. 
	\eary
	Indeed, otherwise $k < q_{n+1} - q_{n-1}$ and $p_{n-1}-q_{n-1} \a  \geq  k \a + p$.   So setting $k'=k+q_{n-1}$ and $p'=p-p_{n-1}$ we have $k' \in\{1,\ldots,q_{n+1}-1\}$, and 
	\aryst
					k'\a + p'=k\a+p+(q_{n-1}\a-p_{n-1}).
	\earyst
	Hence as $0<q_{n}\a-p_{n}\leq-(k \a + p)$, because \eqref{case 1b} does not hold, we see 
	\aryst
	k \a + p < k'\a + p' < 0, 
	\earyst	
	where we have used that $k'\a+p'$ is irrational to obtain a strict inequality on the right.   
	Thus by the maximality condition on $(k,p)$ we have $F^{k', p'}( \tilde\gamma) \cap  \tilde\gamma = \emptyset$.   From 
	$\rho(F^{k',p'})=k'\alpha+p'<0$,  we see that the end points of $F^{k', p'}( \tilde\gamma)$ are on the left side of those of $\tilde\gamma$, and so we conclude that $\tilde\gamma>F^{k', p'}( \tilde\gamma)$.    
	Therefore $\tilde\gamma> F^{k', p'}( \tilde\gamma)=F^{k + q_{n-1}, -p_{n-1} + p}( \tilde\gamma)=F^{k, p}S_0( \tilde\gamma)>F^{k, p}( \tilde\gamma)$ where the last inequality is from the 
	second inequality in \eqref{eq orderofss0gamma'}.    This however contradicts \eqref{intersect 1}.    
	\smallskip
	
	$\bullet$ Now we consider the case that \eqref{case 1a} holds, which by \eqref{q iteration} implies that 
	\ary\label{case 1ab}
			q_{n+1}>k\geq q_{n}.  
	\eary   
	First suppose $k=q_{n}$.    We claim that $p>-p_{n}$.   Indeed, if not, $F^{k,p}=F^{q_{n},p}=T^{p+p_{n}}F^{q_{n},-p_{n}}=T^{p+p_{n}}S$ with $p+p_{n}\leq0$, 
	so from \eqref{eq orderofss0gamma'} $T^{p+p_{n}}S(\tilde\gamma)<\tilde\gamma$.  Therefore $F^{k,p}(\tilde\gamma)<\tilde\gamma$ contradicting \eqref{intersect 1}.   
	Thus $p>-p_{n}$, and so
	\aryst
						0>k\a + p > q_{n}\a - p_{n} = -\{q_{n}\a\}.
	\earyst 
	Hence $d(k\a,\Z)< d(q_{n}\a,\Z)$, so $k\geq q_{n+1}$ contradicting $k=q_{n}$.   
	
	It remains to consider \eqref{case 1ab} with strict inequalities: 
	\ary\label{case 1abb}
			q_{n+1}>k> q_{n}.  
	\eary   
	Set $k':=k-q_{n}$ and $p':=p+p_{n}$.   
	By \eqref{case 1abb} we have $k' > 0$, so that $k'\in\{1,\ldots,q_{n+1}-1\}$, and 
	\ary\label{case 1d}
					k'\a + p'=k\a+p-(q_{n}\a-p_{n}). 
	\eary
	We claim that 
	\ary\label{case 1c}
					k\a+p < k'\a + p'<0.   
	\eary
	Indeed, 
	 \eqref{case 1d} with $q_{n}\a-p_{n}=-\{q_{n}\a\}<0$ implies the first inequality.   
	To see the second inequality in \eqref{case 1c}, note that $q_{n}< k<q_{n+1}$ implies $d(k\a,\Z)> d(q_{n}\a,\Z)$.  
	Therefore $|k\a+p|> d(q_{n}\a,\Z)=\{q_{n}\a\}=p_{n}-q_{n}\a$ and since $k\a+p<0$ this means $k\a+p<q_{n}\a -p_{n}$.   Therefore from 
	\eqref{case 1d} we get $k'\a + p'< 0$ as claimed.  This proves \eqref{case 1c}.    
	
	Hence, from the maximality condition on $(k,p)$ we conclude $F^{k', p'}( \tilde\gamma) \cap  \tilde\gamma = \emptyset$, and therefore moreover $\tilde\gamma>F^{k', p'}( \tilde\gamma)$, 
	because $\rho(F^{k',p'})=k'\alpha+p'<0$.    Thus $\tilde\gamma> F^{k', p'}( \tilde\gamma)=F^{k, p}S^{-1}( \tilde\gamma)>F^{k, p}( \tilde\gamma)$ where the last inequality is from the 
	first inequality in \eqref{eq orderofss0gamma'}.    This however contradicts \eqref{intersect 1}.   	
	
	\smallskip
	
	$\bullet$ Now we consider the case that \eqref{case 1b} holds, namely  	
	\aryst
				-\{q_{n-1}\a\}=p_{n-1}-q_{n-1} \a  < k \a + p <0.  
	\earyst
	Therefore $d(k\a,\Z)<\{q_{n-1}\a\}=d(q_{n-1}\a,\Z)$.    It is a basic fact about rational approximations of irrational $\a\in(0,1/2)$, that the only integer multiples $k\a$ 
	that are closer to $\Z$ than $d(q_{n-1}\a,\Z)$, with $k<q_{n+1}$ and $n$ odd,  
	for which $k\a$ approaches $\Z$ from the ``negative side'', are the multiples of $q_{n}$.    Therefore 
	\aryst
	\qquad k = iq_{n},\qquad\mbox{some }i\in\{1,\ldots,a_{n+1}\}.  
	\earyst
	We claim that $p = -ip_{n}$.    Indeed, from 
	\eqref{eq continuefraction} we 
	have $-1<-i\beta_{n}<0$, that is $-1< i(q_{n} \a - p_{n})<0$, i.e.\ 
	\aryst
	-1 < k\a -ip_{n} <  0. 
	\earyst
	As also $-1 < k\a + p <  0$, we must have $p = -ip_{n}$ as claimed.     
	
	Consider first the case $i=1$, that is, $k = q_{n}$.   Then $F^{k,p}=F^{q_{n},-p_{n}}=S$ 
	so that by \eqref{E:condition 1 gamma} we have $F^{k,p}(\tilde\gamma)\cap\tilde\gamma=\emptyset$ contradicting \eqref{intersect 1}.  
	
	It remains to consider the case $i>1$.    Then $-1<-(i-1)\beta_{n}<0$ leads to 
	$-1 < (k-q_{n}) \a -(i-1)p_{n} <  0$, i.e.\ 
	\ary\label{case 1e}
	-1 < (k-q_{n}) \a +p + p_{n} <  0.
	\eary
	Note that $\rho(S^{-1}F^{k, p})=\rho(F^{k-q_{n},p+p_{n}})=(k-q_{n}) \a +p + p_{n}$, so that by \eqref{case 1e} 
	we have $\rho(S^{-1}F^{k, p})<0$.   Therefore the endpoints of $S^{-1}F^{k, p}( \tilde\gamma)$ are on the left hand side of those of $\tilde\gamma$.   
	On the other hand, by \eqref{eq orderofss0gamma'} we have 
	$S^{-1} F^{k, p}( \tilde\gamma)= F^{k, p}S^{-1}( \tilde\gamma) > F^{k,p}( \tilde\gamma)$.   This is a contradiction.    
	
	\smallskip
	
	{\bf Case II:} \
	In which $\rho(F^{k,p})> 0$, i.e.\ $k\a + p > 0$.     
	\smallskip
	
	Without loss of generality we can assume that $k\alpha+p$ is minimal with the given restrictions.   In other words, 
	that $F^{k', p'}( \tilde\gamma) \cap  \tilde\gamma = \emptyset$ for all $p'\in\Z$ and all $k'\in\{1,\ldots,q_{n+1}-1\}$ satisfying $k\alpha+p>k'\alpha+p'>0$. 
	\smallskip
	
	$\bullet$
	By the minimality condition we can reduce to the following two subcases:   Either 
	\ary\label{case 2a}
				  k \leq q_{n-1}
	\eary
	or 
	\ary\label{case 2b}
				q_{n-1} \a -p_{n-1} > k \a + p.  
	\eary
	Indeed, otherwise $q_{n+1}>k > q_{n-1}$ and $q_{n-1} \a -p_{n-1}  \leq  k \a + p$.  So setting $k'=k-q_{n-1}$ and $p'=p+p_{n-1}$ we have $k' \in\{1,\ldots,q_{n+1}-1\}$, and 
	\aryst
					k'\a + p'=k\a+p-(q_{n-1}\a-p_{n-1}).
	\earyst
	Hence as $0<q_{n}\a-p_{n}\leq k \a + p$, because \eqref{case 2b} does not hold, we see that 
	\aryst
	k \a + p > k'\a + p' > 0
	\earyst	
	where we have used that $k'\a+p'$ is irrational to obtain a strict inequality on the right.    
	Thus by the minimality condition on $(k,p)$ we have $F^{k', p'}( \tilde\gamma) \cap  \tilde\gamma = \emptyset$.   From 
	$\rho(F^{k',p'})=k'\alpha+p'>0$,  we see that the end points of $F^{k', p'}( \tilde\gamma)$ are on the right side of those of $\tilde\gamma$, and we conclude that $\tilde\gamma<F^{k', p'}( \tilde\gamma)$.    
	Therefore $\tilde\gamma< F^{k', p'}( \tilde\gamma)=F^{k - q_{n-1}, p+p_{n-1}}( \tilde\gamma)=F^{k, p}S^{-1}_0( \tilde\gamma)<F^{k, p}( \tilde\gamma)$ using the 
	second inequality in \eqref{eq orderofss0gamma'}.    This contradicts \eqref{intersect 1}.   
	\smallskip

	$\bullet$ Here we consider the case that \eqref{case 2a} holds, that is,  
	\ary\label{case 2ab}
			0<k \leq q_{n-1}.
	\eary
	First suppose $k=q_{n-1}$.    We claim $p<-p_{n-1}$.   If not, $F^{k,p}=F^{q_{n-1},p}=T^{p+p_{n-1}}F^{q_{n-1},-p_{n-1}}=T^{p+p_{n-1}}S_0$ with $p+p_{n-1}\geq0$, 
	so from \eqref{eq orderofss0gamma'} $T^{p+p_{n-1}}S_0(\tilde\gamma)>\tilde\gamma$.  So $F^{k,p}(\tilde\gamma)>\tilde\gamma$ contradicting \eqref{intersect 1}.   
	This proves $p<-p_{n-1}$, and so
	\aryst
						0<k\a + p < q_{n-1}\a - p_{n-1} = \{q_{n-1}\a\}.
	\earyst 
	But this means that $d(k\a,\Z)< d(q_{n-1}\a,\Z)$, which implies $k\geq q_{n}$ contradicting $k=q_{n-1}$.   
	
	It therefore remains to consider \eqref{case 2ab} with strict inequalities: 
	\ary\label{case 2abb}
			0<k < q_{n-1}.
	\eary   
	Setting $k':=k+q_{n}$ and $p':=p-p_{n}$,  by \eqref{q iteration} $k' < q_{n+1}$, 
	 so $k'\in\{1,\ldots,q_{n+1}-1\}$, and 
	\ary\label{case 2d}
					k'\a + p'=k\a+p+(q_{n}\a-p_{n}).   
	\eary
	We claim that 
	\ary\label{case 2c}
					k\a+p > k'\a + p'>0.   
	\eary
	Indeed, \eqref{case 2d} with $q_{n}\a-p_{n}=-\{q_{n}\a\}<0$
	implies the first inequality.   
	To see the second inequality in \eqref{case 2c}, note that \eqref{case 2abb} implies $d(k\a,\Z)>d(q_{n-1}\a,\Z)$.    
	Therefore $k\a+p=|k\a+p|> d(q_{n-1}\a,\Z)=\{q_{n-1}\a\}>\{q_{n}\a\}=p_{n}-q_{n}\a$, which means that the right hand side of \eqref{case 2d} is positive, that is $k'\a + p'>0$ as claimed.  
	This proves \eqref{case 2c}.    
	
	\smallskip

	$\bullet$ Now we consider the case that \eqref{case 2b} holds, namely  	
	\aryst
			 \{q_{n-1}\a\}=q_{n-1} \a -p_{n-1} > k \a + p >0.  
	\earyst
	Therefore $d(k\a,\Z)<\{q_{n-1}\a\}=d(q_{n-1}\a,\Z)$.       	
         It follows from the theory of rational approximations, since $\a\in(0,1/2)\backslash\Q$, $n$ is odd, $k<q_{n+1}$, 
	$d(k\a,\Z)<d(q_{n-1}\a,\Z)$ and $k\a$ is close to $\Z$ from the ``positive side'', that 
	\aryst
	\qquad k = q_{n-1}+iq_{n},\qquad\mbox{some }i\in\{1,\ldots,a_{n+1}-1\}.  
	\earyst
	We claim that $p = -p_{n-1}-ip_{n}$.   Indeed, a computation using \eqref{eq continuefraction} shows that 
	$(a_{n+1}-1)\beta_{n}<\beta_{n}$.   It follows that if $i\in\{1,\ldots,a_{n+1}-1\}$ then $1>\beta_n-i\beta_{n}>0$, which becomes 
	\ary\label{case 2e}
				1>k\a - (p_{n-1}+ip_{n})>0  
	\eary
	where $k = q_{n-1}+iq_{n}$.    Since $p\in\Z$ must uniquely satisfy $1>k\a+p>0$, we conclude from \eqref{case 2e} that $p=-p_{n-1}-ip_{n}$ as claimed. 
	Thus 
	\aryst
	(k,p) \in \{(q_{n-1} + iq_{n}, -p_{n-1} - i p_{n} ) \mid 0 <  i < a_{n+1}\}.
	\earyst
	By the hypothesis that $\gamma'$ is an $a_{n+1}$-good curve,
	we have 
	\aryst 
	T^{-1}\tilde S^i(\tilde \gamma') \cap \tilde \gamma' = \emptyset \quad  \forall 1 \leq i < a_{n+1}.
	\earyst
	By $T^{-1} = JH^{-1} S_0 HJ$, $\tilde S = JH^{-1}S HJ$ and $\tilde\gamma = HJ(\tilde \gamma')$, we obtain
	\aryst
	F^{q_{n-1} + i q_{n}, -p_{n-1} - ip_{n}}(\tilde\gamma) \cap \tilde\gamma = 	S_0S^{i}(\tilde\gamma) \cap \tilde\gamma = \emptyset
	\earyst
	for each integer $0 < i < a_{n+1}$.    Hence $F^{k,p}(\tilde\gamma) \cap \tilde\gamma = \emptyset$, contradicting \eqref{intersect 1}.   
	
	\medskip 
	
	This covers all cases, and we conclude that $ \gamma$ is $q_{n+1}$-good.  
\end{proof}

\begin{thm}\label{cor smooth return domain new version}
	For each $(r,M)\in\N_{\geq 2}\times (1,\infty)$, there exist increasing functions $P=P_{(r,M)}:\N\to\R_+$ and $W_{(r,M)}:\N \to \R_+$, 
	such that the following holds.    If $f:\A\to\A$ is any smooth pseudo-rotation with $\|f\|_{\diff^{r+5}(\A)} \leq M$, 
	whose rotation number $\a=\rho(f)$ has associated sequence $q_1, q_2, \ldots $ satisfying 
	\ary\label{eq qnqn+1qn+2}
		q_{n} > P(q_{n-1}),\qquad q_{n+1} > P(q_{n}),\qquad q_{n+2} > P(q_{n+1}) 
	\eary
	for some $n\in\N_{\mathrm{odd}}$, then there exists a smooth Brouwer curve $\gamma\subset\A$ for $f$, so that: 
	\begin{enumerate}
		\item $\gamma$ is $q_{n+1}$-good for $f$.  
		\item the fundamental domain bounded by $(f^{q_n}(\gamma), \gamma)$ has an admissible coordinate $\Phi:[0,1]^2\to \A$, 
		with respect to $f^{-q_n}$, satisfying 
		\ary\label{E:bound W}
					\|\Phi\|_{\diff^r([0,1]^2)}\ \leq\ W_{(r,M)}(q_{n+1}).   
		\eary
	\end{enumerate} 
\end{thm}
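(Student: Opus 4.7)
The plan is to apply the renormalization framework of Section~\ref{S:renorm def} twice, then invoke Proposition~\ref{T:brouwer for renorm} to transport the result back to~$f$. Since $n$ is odd, $f^{q_{n-1}}$ is itself a pseudo-rotation whose preferred lift $F_{q_{n-1}}$ has rotation number $\beta_{n-1}=q_{n-1}\alpha-p_{n-1}\in(0,1)$, and \eqref{eq continuefraction} together with \eqref{E:Gauss of beta} gives $\beta_{n-1}<1/q_n$ and $\G(\beta_{n-1})<2q_{n-1}q_n/q_{n+1}$. Since $\|f^{q_{n-1}}\|_{\diff^{r+4}(\A)}$ is dominated by a function of $(r,M,q_{n-1})$, I would first define $P(q_{n-1})$ and $P(q_n)$ large enough — in terms of $(r,M,q_{n-1})$ and $(r,M,q_n)$ respectively — that the two smallness conditions of Proposition~\ref{prop. main} are satisfied for $f^{q_{n-1}}$. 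Proposition~\ref{prop. main} and Lemma~\ref{lem brouwer curve compactness} (applied at regularity $r+2$) then yield a smooth $q(\beta_{n-1})=q_n$-good Brouwer curve $\sigma$ for $f^{q_{n-1}}$ and, for the region bounded by $(\tilde\sigma,F_{q_{n-1}}(\tilde\sigma))$, an admissible coordinate $H:[0,1]^2\to\tilde{\A}$ whose $\diff^{r+2}$-norm is controlled by $(r,M,q_n)$. Extend $H$ uniquely to $\tilde{\A}$ via $F_{q_{n-1}}H=HT$.

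Next, form the renormalization $f^{q_n,-p_n}_H$ from Definition~\ref{D:renorm}. By Lemma~\ref{renorm is prot},
\[
\rho(f^{q_n,-p_n}_H)=\Bigl\{\tfrac{p_n-q_n\alpha}{\beta_{n-1}}\Bigr\}=\Bigl\{\tfrac{\beta_n}{\beta_{n-1}}\Bigr\}=\alpha_n\in(0,1/2),
\]
so $q(\alpha_n)=a_{n+1}$. Enlarging $P$ so that $q_{n+1}>P(q_n)$ triggers the largeness hypothesis of Lemma~\ref{L:bounds on renorm}, that lemma bounds $\|f^{q_n,-p_n}_H\|_{\diff^{r+2}(\A)}$ in terms of $(r,M,q_n)$. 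I would then apply Proposition~\ref{prop. main} and Lemma~\ref{lem brouwer curve compactness} a second time, this time to $f^{q_n,-p_n}_H$. By \eqref{eq anupperlower} and \eqref{E:Gauss of alpha}, $\alpha_n<2q_n/q_{n+1}$ and $\G(\alpha_n)=\alpha_{n+1}<2q_{n+1}/q_{n+2}$; enlarging $P$ once more forces the first smallness condition from $q_{n+1}>P(q_n)$ and the second from $q_{n+2}>P(q_{n+1})$. The outcome is an $a_{n+1}$-good smooth Brouwer curve $\gamma'$ for $f^{q_n,-p_n}_H$ together with an admissible coordinate $\Phi'$ for $(\gamma',f^{q_n,-p_n}_H(\gamma'))$ whose $\diff^r$-norm is controlled by $\alpha_n^{-1}<2q_{n+1}/q_n$ and the preceding $\diff^{r+2}$-bound — altogether by an increasing function of $(r,M,q_{n+1})$.

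Finally, Proposition~\ref{T:brouwer for renorm} transports $\gamma'$ via $HJ$ to a $q_{n+1}$-good Brouwer curve $\gamma$ for $f$, and delivers an admissible coordinate $\Phi:[0,1]^2\to\A$ for $(f^{q_n}(\gamma),\gamma)$ satisfying
\[
\|\Phi\|_{\diff^r([0,1]^2)}\le G_r\bigl(\|\Phi'\|_{\diff^r},\,\|H\|_{\diff^r},\,\|f\|_{\diff^r},\,q_{n-1}\bigr).
\]
Each right-hand quantity has already been bounded by an increasing function of $(r,M,q_{n+1})$, so their composition defines the desired $W_{(r,M)}(q_{n+1})$. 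Taking $P$ to be the pointwise maximum of the finitely many threshold functions produced along the way completes the construction.

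The hard part will be the bookkeeping of regularity and smallness hypotheses: each call to Proposition~\ref{prop. main} measures a rotation-number smallness ($\beta_{n-1}$ in step one, $\alpha_n$ in step two) against a $\diff^3$-norm that itself depends on the $\diff^{r+2}$-norm of the earlier renormalization, and this cascade of two-derivative losses (compounded by the derivative lost in Lemma~\ref{A:admissible-coordinates-modified}) is precisely what forces the hypothesis $\|f\|_{\diff^{r+5}}\le M$. The existence of a single increasing $P$ that works uniformly over all such $f$ rests on the monotonicity of $\varepsilon_0,\varepsilon_1,\varepsilon_2$ and of $C_r,E_r,G_r$ in every argument.
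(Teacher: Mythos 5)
Your proposal is correct and follows essentially the same route as the paper: apply Lemma \ref{lem brouwer curve compactness} to $f^{q_{n-1}}$ to obtain $H$, renormalize to $f^{q_n,-p_n}_H$ with rotation number $\a_n$, apply Proposition \ref{prop. main} to get an $a_{n+1}$-good curve $\gamma'$ there, and transport back via Proposition \ref{T:brouwer for renorm}, converting the smallness conditions on $\beta_{n-1},\G(\beta_{n-1}),\a_n,\G(\a_n)$ into the three gap conditions on $q_n,q_{n+1},q_{n+2}$. The only (harmless) deviation is that you also invoke Proposition \ref{prop. main} for $f^{q_{n-1}}$ itself to make $\sigma$ a $q_n$-good curve, which is not needed — only the renormalized map's curve must be good.
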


%
The proof is based on the following three steps: 
\begin{enumerate} 
 \item Apply Lemma \eqref{lem brouwer curve compactness} to $f^{q_{n-1}}$: This yields a Brouwer curve for $f^{q_{n-1}}$ and admissible coordinates $H$ on a corresponding fundamental 
 domain, with $\diff^r$-bounds on $H$.   
 To do this we will need smallness assumptions on $\rho(f^{q_{n-1}})=\beta_{n-1}$ and $\G(\beta_{n-1})$.   
 From this step we will be able to define the renormalization $f^{q_{n},-p_{n}}_{H}$.  
 \item Apply Proposition \eqref{prop. main} to $f^{q_{n},-p_{n}}_{H}$:  This yields a $q(\rho(f^{q_{n},-p_{n}}_{H}))=q(\a_{n})=a_{n+1}$-good Brouwer curve for 
 $f^{q_{n},-p_{n}}_{H}$, under smallness assumptions on $\rho(f^{q_{n},-p_{n}}_{H})=\a_{n}$ and $\G(\a_{n})$.  
 \item Apply Proposition \eqref{T:brouwer for renorm} to $f^{q_{n},-p_{n}}_{H}$ and the Brouwer curve found in Step (2):  This yields a $q_{n+1}$-good Brouwer curve for $f$, with estimates on the corresponding 
 fundamental domain for $f^{-q_{n}}$.  
\end{enumerate}

\begin{proof}
We go through the above steps and a final fourth step to reformulate the conditions on $\a$ in the form of \eqref{eq qnqn+1qn+2}.   
Fix $r\in\N$ and $n\in\N_{\mathrm{odd}}$.  

\medskip

{\bf Step 1:}  
Since $n$ is odd  $\rho(f^{q_{n-1}})=q_{n-1}\a-p_{n-1}=\beta_{n-1}$, 
so to apply Lemma \eqref{lem brouwer curve compactness} to $f^{q_{n-1}}$, it suffices that 
\begin{equation}\label{E:estimates fn+1 I}
				\beta_{n-1} \leq \varepsilon_0\big(\|f^{q_{n-1}}\|^{-1}_{\diff^1(\A)}\big),\quad \G(\beta_{n-1})\leq\varepsilon_1\big(\beta_{n-1},\, \|f^{q_{n-1}}\|^{-1}_{\diff^1(\A)}\big).
\end{equation}
Assume that \eqref{E:estimates fn+1 I} holds.   Then we have a Brouwer curve $\sigma\subset\A$ for $f^{q_{n-1}}$ and 
an admissible coordinate $H:[0,1]^2\to\Omega$ for $F_{q_{n-1}}:=F^{q_{n-1},-p_{n-1}}$, where $\Omega\subset\tilde\A$ is a corresponding fundamental 
domain for $F_{q_{n-1}}$, with bounds 
\ary\label{T2:H estimates 2}
 			\|H\|_{\diff^{r+1}([0,1]^2)}\ \leq\ C_{r+1}\left(2q_n,\, \|f^{q_{n-1}}\|_{\diff^{r+3}(\A)}\right) 
\eary
from Lemma \ref{lem brouwer curve compactness}, using that $1/\beta_{n-1}\leq 2q_n$ from \eqref{eq continuefraction}.    Now we consider the renormalization $f^{q_{n},-p_{n}}_{H}$ with respect to $H$, 
as at the beginning of \ref{SS:finding brouwer curve}.       

By Lemma \ref{L:bounds on renorm} we have bounds on $\|f^{q_{n},-p_{n}}_{H}\|_{\diff^{r+1}(\A)}$, provided 
$q_{n+1}$ is sufficiently large compared with $\|H\|_{C^1([0,1]^2)}$, $\|f\|_{C^1(\A)}$, $q_{n-1}, q_{n}, p_{n}$.    In particular, using \eqref{T2:H estimates 2}, 
compared with $M, q_{n-1}, q_{n}$ (we ignore $p_n$ as this is bounded by a function of $q_n$).   
Without loss of generality we may assume this condition on $q_{n+1}$ due to the middle inequality in \eqref{eq qnqn+1qn+2}.    
This yields 
\ary\label{E:estimates renorm I}
							\|f^{q_{n},-p_{n}}_{H}\|_{\diff^{r+2}(\A)} \leq  E'_{r}\big(\|f\|_{\diff^{r+4}(\A)},\,q_{n}\big), 
\eary
using \eqref{T2:H estimates 2} again, for some increasing function $E'_r$ depending only on $r$.   

\smallskip

{\bf Step 2:}  
Since $\rho(f^{q_{n},-p_{n}}_{H})=\a_{n}$, to apply Proposition \ref{prop. main} it suffices that 
\begin{equation}\label{E:estimates fn+1 II}
				\a_{n}\leq\varepsilon_0\big(1/\|f^{q_{n},-p_{n}}_{H}\|_{\diff^1}\big),\quad \G(\a_{n})\leq \varepsilon_2\big(\a_{n},\, 1/\|f^{q_{n},-p_{n}}_{H}\|_{\diff^{3}}\big).
\end{equation}
Assume for the moment that these two conditions are also fulfilled.    Then we conclude that $f^{q_{n},-p_{n}}_{H}$ has a 
$q(\rho(f^{q_{n},-p_{n}}_{H})=q(\a_{n})=a_{n+1}$-good Brouwer curve $\gamma'\subset\A$ with an associated fundamental domain 
having admissible coordinates $\Phi'$ satisfying, by \eqref{L:phi estimates}, estimates of the form 
  \ary\label{T2:H estimates 3}
 			\|\Phi'\|_{\diff^{r}([0,1]^2)}\ \leq\ C_r\left(1/\a_{n},\, \|f^{q_{n},-p_{n}}_{H}\|_{\diff^{r+2}}\right).
 \eary
 

{\bf Step 3:}  
 Applying Proposition \eqref{T:brouwer for renorm} to $f^{q_{n},-p_{n}}_{H}$ with its $a_{n+1}$-good Brouwer curve $\gamma'$ found in step 2, we obtain, 
 as a kind of lift of $\gamma'$, a $q_{n+1}$-good Brouwer curve $\gamma$ for $f$.    By \eqref{E:bound Phi}, the associated fundamental 
domain $(f^{q_{n}}(\gamma),\gamma)$ has admissible coordinates $\Phi$, with respect to $f^{-q_n}$, satisfying 
\begin{equation}\label{E:Brouwer Phi estimates}
		 \|\Phi\|_{\diff^r([0,1]^2)} \leq G_r\big(\|\Phi'\|_{\diff^r([0,1]^2)},\, \|H\|_{\diff^r([0,1]^2)},\, \|f\|_{\diff^r(\A)},\,q_{n-1}\big)
\end{equation}
for each $r\in\N$.     Using \eqref{T2:H estimates 3}, \eqref{T2:H estimates 2} and \eqref{E:estimates renorm I} 
we see that $\|\Phi'\|_{\diff^r([0,1]^2)}$ and $\|H\|_{\diff^r([0,1]^2)}$ can be bounded in terms of $1/\a_n,q_{n-1}, q_{n}$ and 
$M$.    
The former can all be bounded in terms of $q_{n+1}$ using basic properties and \eqref{eq anupperlower}.    We arrive at a 
bound in the form of \eqref{E:bound W}, as desired.  

\smallskip

{\bf Step 4:}  
Retracing our steps, we assumed $\a$ satisfies \eqref{E:estimates fn+1 I} and \eqref{E:estimates fn+1 II}.    
Using \eqref{E:estimates renorm I} these assumptions take the form:  
\begin{align*}
								\beta_{n-1} \leq \varepsilon_0(1/\|f\|^{q_{n-1}}_{\diff^1([0,1]^2)}),\qquad 
								\G(\beta_{n-1})  \leq  \varepsilon_1\big(\beta_{n-1},\, 1/\|f\|^{q_{n-1}}_{\diff^1([0,1]^2)}\big)
								\intertext{and}
								\a_{n}  \leq  \varepsilon'\big(1/\|f\|_{\diff^{4}},1/q_{n}\big),\qquad 
								\G(\a_{n})  \leq \varepsilon_r\big(\a_{n},\, 1/\|f\|_{\diff^{5}},1/q_{n}\big).\qquad 
\end{align*}
where $\varepsilon', \varepsilon_r$ are increasing positive functions, with $\varepsilon$ independent of everything and $\varepsilon_r$ dependent only on $r$.  
%
Using \eqref{E:Gauss of alpha}, \eqref{E:Gauss of beta}, \eqref{eq continuefraction}, \eqref{eq anupperlower}, the above conditions are fulfilled if 
\ary\label{E:brouwer final conditions 2}
\begin{aligned}
								\hspace{100pt}\frac{1}{q_{n}} &\leq \varepsilon_0(1/M^{q_{n-1}})\\
								\frac{2q_{n-1}q_{n}}{q_{n+1}} & \leq  \varepsilon_1\left(\frac{1}{2q_{n}},\, 1/M^{q_{n-1}}\right)\\
								\frac{2q_{n}}{q_{n+1}} 	& \leq \varepsilon'\left(1/M,1/q_{n}\right)\\
								\frac{2q_{n+1}}{q_{n+2}} 	& \leq  \varepsilon_r\left(\frac{q_n}{2q_{n+1}},\, 1/M,1/q_{n}\right).  
\end{aligned}
\eary
In this final list; the first condition holds if $q_{n}$ is sufficiently large compared to $M$ and $q_{n-1}$.   
The second, third and fifth conditions hold if $q_{n+1}$ is large compared to $q_{n}$ and $M$, respectively compared to $q_n$, while the 
fourth condition holds if $q_{n+2}$ is large compared to $q_{n+1}$ and $M$.    
Thus \eqref{E:brouwer final conditions 2} holds if 
\aryst
			q_{n}\geq P(q_{n-1}, M), \qquad
								q_{n+1}\geq P'(q_{n}, M), \qquad 
								q_{n+2}\geq P_r(q_{n+1} , M)  
\earyst 
for some increasing functions $P, P', P_r$.    Replacing $P, P'$ and $P_r$ by their maximum we arrive at the 
conditions in \eqref{eq qnqn+1qn+2}.    This proves Theorem \ref{cor smooth return domain new version}. 
\end{proof}

\section{Construction of approximants}\label{S:approximants}
In this final section we use the Brouwer curve from Theorem \ref{cor smooth return domain new version} to construct periodic approximations to 
a pseudo-rotation $f$ with rotation number $\a:=\rho(f)$ under a generic condition on $\a$, as outlined in Introduction \ref{S:overview}.   
%
These goals are mostly contained in the following Theorem, whose proof will occupy most of the section.

Recall the following notation from Lemma \ref{lem construct rotation nbs}:  For any $P:\N\to\R_+$ define 
\aryst
			\cC_{P}(n) := \Big\{ x \in (0,1)\setminus \Q\ \Big|\ \ q_{n} > P(q_{n-1}),\quad q_{n+1} > P(q_{n}),\quad  q_{n+2} > P(q_{n+1})  \Big\},
\earyst 
where $(q_j)_{j\in\N}$ is the sequence of denominators in the best rational approximations to $x$, see Section \ref{S:rational}.  
\begin{thm}\label{thm. open dense} 
	 For each $(r,M,\epsilon) \in \N_{\geq 3} \times \N\times(0,1]$, there exists a function $P=P_{(r,M,\epsilon)}:\N\to\R_+$ 
	 such that the following holds.  
	  If $f:\A\to\A$ is any smooth pseudo-rotation with\footnote{We use bounds on one more derivative than in Theorem \ref{cor smooth return domain new version}, so that in \eqref{phi bounds} we get 
	  bounds on admissable coordinates up to order $r+1$.    This one extra derivative on the admissible coordinates is used to apply Lemma \ref{L:composition} in the proof of Proposition \ref{P:sigma small}.} 
	  \aryst
	  			\norm{f}_{\diff^{r+6}(\A)} \leq M, 
	  \earyst
	  and rotation number $\a=\rho(f)$ satisfying
	\aryst\label{D: A*}
		\a \in \cC_{P}(n)
	\earyst
	for some $n\in\N_{\mathrm{odd}}$, then there exists $h\in\diff^\infty(\A,\omega)$ so that 
	\ary\label{epsilon close}
	d_{\diff^{r - 2}(\A)}(f,\ h R_{\a}  h^{-1})\leq \epsilon. 
	\eary
\end{thm}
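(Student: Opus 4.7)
My plan follows the four-step outline from Section \ref{S:overview}. Throughout, the function $P = P_{(r,M,\epsilon)}$ will be defined as the maximum of several increasing functions extracted from the estimates below.

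\textbf{Step 1.} I would apply Theorem \ref{cor smooth return domain new version} (with $r$ replaced by $r+1$) to obtain a smooth $q_{n+1}$-good Brouwer curve $\gamma$ for $f$ and an admissible coordinate $\Phi : [0,1]^2 \to \A$ for the fundamental domain bounded by $(f^{q_n}(\gamma),\gamma)$ with respect to $f^{-q_n}$, satisfying $\|\Phi\|_{\diff^{r+1}([0,1]^2)} \leq W_{(r+1,M)}(q_{n+1})$. The iterates $\gamma, f(\gamma), \ldots, f^{q_{n+1}-1}(\gamma)$ are pairwise disjoint and tile $\A$ into topological rectangles $\cT_0, \ldots, \cT_{q_{n+1}-1}$. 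This uses the first two inequalities in the definition of $\cC_P(n)$.

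\textbf{Step 2.} Since $\|\rho(f^{q_{n+1}})\|_{\R/\Z} = \beta_{n+1} < 1/q_{n+2}$, Theorem \ref{thm aflxz} combined with convexity interpolation between the $C^0$ and $C^{r+5}$ norms shows that $f^{q_{n+1}}$ is $\diff^{r-1}$-close to $\id_\A$, as long as $q_{n+2}$ is large enough relative to $(r,M,\epsilon)$. Inside a thin tubular neighborhood of $f^{q_{n+1}-1}(\gamma)$ disjoint from the other curves, I modify $f$ by a smooth cutoff so that the resulting map $g_0$ satisfies $g_0^{q_{n+1}}(\gamma) = \gamma$ exactly; a further $\omega$-preserving perturbation supported inside $\cT_0$ then yields $g$ with $g^{q_{n+1}} = \id_\A$. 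Because $f^{q_{n+1}}$ is itself $\diff^{r-1}$-small, every modification is $\diff^{r-1}$-small, so $d_{\diff^{r-1}(\A)}(f,g) < \epsilon/3$.

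\textbf{Step 3.} The map $g$ cyclically permutes the $q_{n+1}$ tiles and satisfies $g^{q_{n+1}} = \id$, so it is smoothly conjugate to the rigid rotation $R_{p_{n+1}/q_{n+1}}$. I construct the conjugacy $h \in \diff^\infty(\A)$ tile by tile using iterates of $\Phi$: in admissible coordinates $g$ restricted to one fundamental domain is a translation, so $h$ is essentially $\Phi^{-1}$ re-normalized via a uniform model that does not depend on $q_{n+1}$-fold compositions of $g$. Consequently $\|h\|_{\diff^r(\A)}$ is bounded by a function of $\|\Phi\|_{\diff^{r+1}}$ and $M$ only. Writing $\tilde g := h^{-1} R_\alpha h$, one has
\[
d_{\diff^{r-1}(\A)}(g, \tilde g)\ \leq\ C(\|h\|_{\diff^r}) \cdot \bigl|\alpha - p_{n+1}/q_{n+1}\bigr|\ \leq\ C / (q_{n+1} q_{n+2}),
\]
which is less than $\epsilon/3$ provided $q_{n+2}$ is sufficiently large. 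This is where the third inequality of $\cC_P(n)$ enters.

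\textbf{Step 4 and the main obstacle.} The $h$ produced in Step 3 need not lie in $\diff^\infty(\A, \omega)$. Since $h^*\omega$ is close to $\omega$ in $C^{r-2}$ and both have the same total mass on $\A$, a Moser deformation argument (analogous to the one in the proof of Lemma \ref{A:admissible-coordinates-modified}) yields an area-preserving $h' \in \diff^\infty(\A, \omega)$ with $d_{\diff^{r-2}}(h, h')$ as small as desired, losing one derivative. Then $h'^{-1} R_\alpha h' \in O^\infty_\A(\alpha)$ and $d_{\diff^{r-2}(\A)}(f, h'^{-1} R_\alpha h') < \epsilon$. The main obstacle is Step 3: obtaining bounds on $h$ that are independent of the orbit length $q_{n+1}$. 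This is precisely what the renormalization of Section \ref{S:renorm} delivers, since the admissible coordinates $\Phi$ encode the full cyclic structure of the tiling in a single chart, so $h$ reduces to a finite model after quotienting by $g$. The three Liouville-type conditions $q_n \gg q_{n-1}$, $q_{n+1} \gg q_n$, and $q_{n+2} \gg q_{n+1}$ are then packaged into the single condition $\alpha \in \cC_P(n)$.
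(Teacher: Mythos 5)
Your overall architecture matches the paper's (good Brouwer curve and tiling from Theorem \ref{cor smooth return domain new version}, periodization, conjugation to $R_{p_{n+1}/q_{n+1}}$, passage to $R_\a$ via the gap $|\a-p_{n+1}/q_{n+1}|\leq 1/(q_{n+1}q_{n+2})$, and a final Dacorogna--Moser correction of $h$), but Step 2 has a genuine gap. You claim that after arranging $g_0^{q_{n+1}}(\gamma)=\gamma$ you can achieve $g^{q_{n+1}}=\id_\A$ by ``a further perturbation supported inside $\cT_0$''. This cannot work as stated: the correction you need is the inverse of the first-return map $g_0^{q_{n+1}}|_{\cT_0}$, and that return map, while close to the identity, is \emph{not} the identity on $\partial\cT_0$ --- it moves points along $\gamma$, along $g_0^{q_n}(\gamma)$, and along $B_0\cup B_1$. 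A diffeomorphism of $\cT_0$ that is nontrivial on $\partial\cT_0$ does not extend by the identity to $\A$, so no perturbation supported in (the interior of) a single tile can kill the return map. This is precisely why the paper spreads the correction $\sigma$ over the \emph{two} tiles adjacent to $\gamma$ (the regions $\cB'$ and $\cC'$ of Section \ref{S:g periodic}), with the two halves $\sigma_L,\sigma_R$ tied together by the compatibility relation \eqref{E:R from L 2}; it is that relation, not support in one tile, that forces $g^{q_{n+1}}=\id_\A$. You would need to supply this (or an equivalent two-tile matching construction) for Step 2 to close.

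Two secondary points. First, in Step 3 your claim that $\|h\|_{\diff^r}$ is bounded by $\|\Phi\|_{\diff^{r+1}}$ and $M$ ``only'', independently of $q_{n+1}$-fold compositions of $g$, is unjustified: $h$ must be defined on every tile, and on $\cT_j$ it is necessarily of the form $s^j\circ h_0\circ g^{-j}$ with $j$ up to $q_{n+1}-1$, so the honest bound (Lemma \ref{L:h with bounds}) depends on $q_{n+1}$. Fortunately this weaker bound suffices, since $q_{n+2}$ is afterwards taken large depending on $q_{n+1}$; but you should also address the smooth matching of the pieces $s^jh_0g^{-j}$ across the curves of $\Gamma$, which requires choosing $h_0$ to satisfy the compatibility condition \eqref{D:h_0 match} (achieved in the paper via $h_0=(\phi\circ\Delta)^{-1}$ and $\phi\circ T=f^{-q_n}\circ\phi$). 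Second, in Step 4 the closeness of $h^*\omega$ to $\omega$ is not automatic, because $h$ is far from the identity and involves long compositions; the paper's argument works only because every building block of $h$ except $\sigma$ has constant Jacobian, so $\log\det(Dh)$ reduces on each tile to $\log\det(D\sigma^{-j})$ plus a constant, which is controlled by the smallness of $\sigma$ together with $\int_\A\det(Dh)\,\omega=1$. Without invoking this structure your assertion is a further gap.
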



Assuming Theorem \ref{thm. open dense} let us prove the main result of the article: 

\begin{proof}[Proof of Theorem \ref{main thm F = closure O}]
Fix $(r,M,\epsilon) \in \N_{\geq 2} \times \N\times(0,1]$ and let $P_{(r,M,\epsilon)}:\N\to\R_+$ be the function from Theorem \ref{thm. open dense}.   Applying Lemma \ref{lem construct rotation nbs} to $P_{(r,M,\epsilon)}$ gives 
\aryst
				\cA(r, M, \epsilon):=\bigcup_{n\in\N_{\mathrm{odd}}} \cC_{P_{(r,M,\epsilon)}}(n)
\earyst
is open and dense in $(0,1)\setminus \Q$.    Thus the countable intersection
	\aryst
	\cA := \bigcap_{(r,M,k)\in\N_{\geq 2}\times\N\times\N}\cA(r, M, 1/k)
	\earyst
is a residual subset of $(0,1)\setminus \Q$ by Baire's theorem.   

We claim that this set $\cA$ does the job for Theorem \ref{main thm F = closure O}.   Indeed, 
fix $\a\in\cA$.   As remarked in the introduction it suffices to prove \eqref{F subset closure O}, namely that 
	\ary \label{eq finoclosure}
	F^{\infty}_\A(\a) \subset \overline{O^\infty_\A(\a)}
	\eary
where the closure is taken in the $C^\infty$-topology.   To show this inclusion, fix $f \in F^\infty_{\A}(\a)$ arbitrary.     
Then for any $\epsilon > 0$ and $r\in\N_{r\geq2}$  choose $M\in\N$ so that $\norm{f}_{\diff^{r+6}(\A)} < M$ and choose $k\in\N$ with $k^{-1}<\epsilon$.     
	Then since $\a \in \cA(r, M, 1/k)$, by Theorem \ref{thm. open dense} there exists $h \in \diff^{\infty}(\A, \omega)$ so that 
	$d_{\diff^{r-2}(\A)}(f, hR_\a h^{-1}) \leq 1/k < \epsilon$.  
	Since $\epsilon>0$ is arbitrary, this gives $F^{\infty}_\A(\a) \subset \overline{O^\infty_\A(\a)}$ with the closure in the $\diff^{r-2}$-topology.   Since $r$ is arbitrary we conclude \eqref{eq finoclosure} holds with closure in the $\diff^\infty$-topology as required.   
\end{proof} 

It remains to prove Theorem \ref{thm. open dense}, which occupies the rest of this section.   

\begin{proof}
Fix a tuple $(r_0,M, \epsilon_0)\in\N_{\geq 3}\times \N \times (0,1]$.   Let 
\ary\label{E:P and W}
		P_{(r_0,M)}:\N\to\R_+,\qquad W_{(r_0,M)}:\N\to\R_+,
\eary
be the increasing functions from Theorem \ref{cor smooth return domain new version} associated to $(r_0,M)$.    We will 
construct $P_{(r_0,M,\epsilon_0)}:\N\to\N$, as claimed in Theorem \ref{thm. open dense}, so that 
$P_{(r_0,M,\epsilon_0)}\ \geq\ P_{(r_0,M)}$.  
To this end, we fix a smooth pseudo-rotation $f:\A\to\A$ satisfying 
\ary\label{E:M bounds}
					\norm{f}_{\diff^{r_0+6}(\A)} \leq M    
\eary
and with rotation number $\a$.    To prove Theorem \ref{thm. open dense} it suffices to show the following:  That there exists a function 
$S=S_{(r_0,M,\epsilon_0)}:\N\to\N$, depending only on $(r_0,M,\epsilon_0)$ (rather than on $f$ specifically), so that if 
\ary\label{condition on a 1}
			\a\in\cC_{P_{(r_0,M)}}(n) \cap\Big\{ x\ \Big|\ q_{n+2} > S(q_{n+1}) \Big\} 
\eary
for some $n\in\N_{\mathrm{odd}}$, then there exists an $h\in\diff^{\infty}(\A,\omega)$ so that 
\ary\label{E:goal}
							d_{\diff^{r_0-2}(\A)}(f,h^{-1}R_{\a}h)\ <\ \epsilon_0.  
\eary
Indeed, then the theorem will be proven by setting $P_{(r_0,M,\epsilon_0)}:=\max\{ P_{(r_0,M)},\ S_{(r_0,M,\epsilon_0)}\}$. 
Note that condition \eqref{condition on a 1} is just saying that $\a\in\cC_{P_{(r_0,M)}}(n)$ and that $q_{n+2}$ is sufficiently large 
depending on $q_{n+1}$, $r_0, M, \epsilon_0$.   Therefore, let us assume that 
\ary\label{E:a condition P}
			\a\in\cC_{P_{(r_0,M)}}(n) 
\eary
for some $n\in\N_{\mathrm{odd}}$, which is a condition on $q_{n-1}, \ldots q_{n+2}$ (we abbrieviate $q_j=q_j(\a)$ for all $j\in\N$), 
and over the next pages, in the course of constructing the diffeomorphism $h$, we will several times require that $q_{n+2}$ is sufficiently large, in a sense that depends only on $(q_{n+1},r_0,M,\epsilon_0)$.    This implicitly leads to a condition of the form described by \eqref{condition on a 1}.  

\subsection{A decomposition of $\A$ that is almost permuted by $f$}\label{S:decomposition}
From condition \eqref{E:a condition P} we may apply Theorem \ref{cor smooth return domain new version} to $f$.   This has two 
important consequences.   The first is that $f$ has a smooth Brouwer curve $\gamma\subset\A$ for which the curves 
\ary\label{boundary curves 0}
				\Gamma:= \Big\{\ \gamma,\ f(\gamma), \ldots,\ f^{q_{n+1}-1}(\gamma)\ \Big\}
\eary 
are mutually disjoint in $\A$.     The second consequence of Theorem \ref{cor smooth return domain new version} is the existence of certain admissible 
coordinates for the region bounded by $f^{q_n}(\gamma)$ and $\gamma$.   This latter fact will not be used until the proofs of Proposition \ref{P:sigma small} and Lemma 
\ref{L:h with bounds}, so we will recall this in section \ref{S:periodic approx}.    

Returning to the curves in $\Gamma$, we would not expect the next iterate of $\gamma$ to be disjoint from the others; indeed, by Theorem \ref{thm aflxz} we expect $f^{q_{n+1}}$ to be close to the the identity, and therefore $f^{q_{n+1}}(\gamma)$ to be close to $\gamma$.    So one can think of $f$ as almost permuting the curves in $\Gamma$.  
Our first step will be to modify $f$ to make it genuinely permute the curves in $\Gamma$, that is, 
so that it maps the ''final curve'' $f^{q_{n+1}-1}(\gamma)$ in $\Gamma$ ``back'' to $\gamma$, closing up the cycle.     
If the changes are made in a small neighborhood of $f^{q_{n+1}-1}(\gamma)$, then the modified map $g:\A\to\A$ will agree with $f$ on all the other curves in $\Gamma$.   

We begin with a combinatorial statement: 

\begin{lemma}\label{L:closest curves}
If the curves in $\Gamma$ are mutually disjoint, then the two curves in $\Gamma$ that are closest to $\gamma$ are $f^{q_{n}}(\gamma)$ and 
$f^{q_{n+1}-q_{n}}(\gamma)$.     If $n\in\N_{\mathrm{odd}}$ then 
\aryst
		f^{q_{n}}(\gamma)\ <\ \gamma\ <\ f^{q_{n+1}-q_{n}}(\gamma)
\earyst
meaning that these curves meet the circle $B_1\subset\partial\A$ as a positively ordered triple.
\end{lemma}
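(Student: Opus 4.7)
The plan is to reduce this combinatorial question to the analogous question about the orbit of $0$ under the rigid rotation $R_\alpha$, via two order-preserving correspondences. First I would show that $f$ restricts to an orientation-preserving homeomorphism $f_1$ of $B_1$ whose rotation number is $\alpha$: since $f$ is isotopic to the identity it preserves $B_1$ setwise, and the lift $F_1$ with $\rho(F_1)=\alpha$ restricts to $\tilde B_1 \cong \R$ as a lift of $f_1$ whose rotation number equals $\alpha$ by Definition \ref{def rotation number F}. Each curve $f^k(\gamma)$ meets $B_1$ in a single point $x_k = f_1^k(x_0)$. By the classical Denjoy--Poincar\'e theorem, $f_1$ is semiconjugate to $R_\alpha$ via a monotone degree-$1$ map, so the cyclic order of $\{x_k : 0 \leq k < q_{n+1}\}$ on $B_1$ coincides with the cyclic order of $\{k\alpha \bmod 1 : 0 \leq k < q_{n+1}\}$ on $\R/\Z$.

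Next I would observe that the $q_{n+1}$ mutually disjoint simple arcs in $\Gamma$ cut $\A$ into $q_{n+1}$ topological rectangles (one verifies this by lifting to $\tilde\A$, where the translates of the lifts are totally ordered by $<$ and consecutive translates bound rectangles), so the cyclic ordering of $\Gamma$ around the annulus coincides with the cyclic ordering of its endpoints on $B_1$. Consequently, the two cyclic neighbors of $\gamma$ in $\Gamma$ correspond exactly to the two immediate neighbors of $0$ in the orbit $\{k\alpha \bmod 1 : 0 \leq k < q_{n+1}\}$ on $\R/\Z$.

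The combinatorial heart of the argument is the standard continued-fraction fact: the two immediate neighbors of $0$ in this orbit are $q_n\alpha \bmod 1$ and $(q_{n+1}-q_n)\alpha \bmod 1 = (q_{n-1}+(a_{n+1}-1)q_n)\alpha \bmod 1$. For $n$ odd, identity \eqref{sign} yields $q_n\alpha \bmod 1 = 1-\beta_n$ (just below $0$) and $(q_{n-1}+jq_n)\alpha \bmod 1 = \beta_{n-1} - j\beta_n$ for $0 \leq j \leq a_{n+1}-1$ (just above $0$), with the smallest gap occurring at $j=a_{n+1}-1$. Using the best-approximation property (via \eqref{eq continuefraction}) that $\|k\alpha\|_{\R/\Z} \geq \beta_{n-1}$ for $0 < k < q_n$, a short case analysis rules out any other $k \in \{1,\ldots,q_{n+1}-1\}$ producing a smaller gap on either side of $0$. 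Thus the triple $(q_n\alpha \bmod 1,\,0,\,(q_{n+1}-q_n)\alpha \bmod 1)$ is positively ordered on $\R/\Z$, which transports back to the positive ordering of $(f^{q_n}(\gamma),\gamma,f^{q_{n+1}-q_n}(\gamma))$ on $B_1$, as required.

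The main obstacle is the combinatorial step about best approximations; it is routine from the identities \eqref{relation pn qn}, \eqref{sign}, \eqref{eq continuefraction} of Section \ref{S:rational}, but requires care to ensure that no $k\in\{1,\ldots,q_{n+1}-1\}\setminus\{q_n,\,q_{n+1}-q_n\}$ produces a smaller gap on either side of $0$. The remaining steps are either routine topology (matching cyclic orders of disjoint arcs with those of their endpoints) or the standard Denjoy--Poincar\'e theory for circle homeomorphisms.
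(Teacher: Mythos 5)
Your proposal is correct and takes essentially the same route as the paper, which disposes of the lemma in one sentence: since $f|_{B_1}$ is a circle diffeomorphism with rotation number $\alpha$, the claim ``follows from standard combinatorial facts about the dynamics of circle homeomorphisms.'' You are simply spelling out those standard facts: the cyclic order of the arcs in $\Gamma$ matches that of their endpoints on $B_1$, Poincar\'e semiconjugacy transports the question to the rigid rotation $R_\alpha$ (here one should note the semiconjugacy is injective on a single orbit, since any nontrivial plateau is wandering), and the continued-fraction identities of Section~\ref{S:rational} identify $q_n\alpha$ and $(q_{n+1}-q_n)\alpha$ as the two nearest neighbors of $0$ in $\{k\alpha \bmod 1 : 0 \le k < q_{n+1}\}$, with the parity of $n$ determining the side. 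No gaps; the elaboration is sound.
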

\begin{proof} 
Since the restriction of $f$ to $B_1$ is a circle diffeomorphism with rotation number $\a$, the claim follows from standard combinatorial facts about the dynamics of circle 
homeomorphisms.  
\end{proof}
We apply this Lemma without justification, as all conditions are satisfied.  
\vspace{-5pt} 
	\begin{figure}[h]
        \centering
        \scalebox{0.6}{
\begingroup%
  \makeatletter%
  \providecommand\color[2][]{%
    \errmessage{(Inkscape) Color is used for the text in Inkscape, but the package 'color.sty' is not loaded}%
    \renewcommand\color[2][]{}%
  }%
  \providecommand\transparent[1]{%
    \errmessage{(Inkscape) Transparency is used (non-zero) for the text in Inkscape, but the package 'transparent.sty' is not loaded}%
    \renewcommand\transparent[1]{}%
  }%
  \providecommand\rotatebox[2]{#2}%
  \newcommand*\fsize{\dimexpr\f@size pt\relax}%
  \newcommand*\lineheight[1]{\fontsize{\fsize}{#1\fsize}\selectfont}%
  \ifx\svgwidth\undefined%
    \setlength{\unitlength}{349.0232244bp}%
    \ifx\svgscale\undefined%
      \relax%
    \else%
      \setlength{\unitlength}{\unitlength * \real{\svgscale}}%
    \fi%
  \else%
    \setlength{\unitlength}{\svgwidth}%
  \fi%
  \global\let\svgwidth\undefined%
  \global\let\svgscale\undefined%
  \makeatother%
  \begin{picture}(1,0.47077331)%
    \lineheight{1}%
    \setlength\tabcolsep{0pt}%
    \put(0,0){\includegraphics[width=\unitlength,page=1]{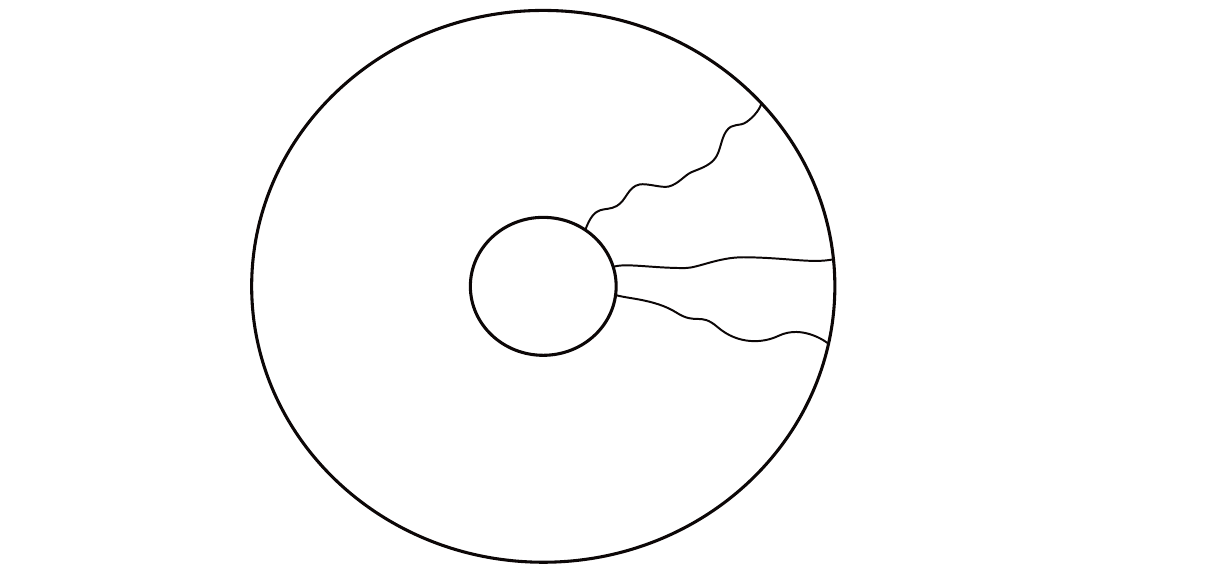}}%
    \put(0.6889969,0.17143175){\color[rgb]{0,0,0}\makebox(0,0)[lt]{\lineheight{1.25}\smash{\begin{tabular}[t]{l}$f^{q_n}(\gamma)$\end{tabular}}}}%
    \put(0.69650152,0.24848776){\color[rgb]{0,0,0}\makebox(0,0)[lt]{\lineheight{1.25}\smash{\begin{tabular}[t]{l}$\gamma$\end{tabular}}}}%
    \put(0,0){\includegraphics[width=\unitlength,page=2]{figure-4.pdf}}%
    \put(0.63289124,0.39339066){\color[rgb]{0,0,0}\makebox(0,0)[lt]{\lineheight{1.25}\smash{\begin{tabular}[t]{l}$f^{q_{n+1}-q_n}(\gamma)$\end{tabular}}}}%
    \put(0,0){\includegraphics[width=\unitlength,page=3]{figure-4.pdf}}%
    \put(0.69650152,0.24848776){\color[rgb]{0,0,0}\makebox(0,0)[lt]{\lineheight{1.25}\smash{\begin{tabular}[t]{l}$\gamma$\end{tabular}}}}%
    \put(0.69305896,0.27735249){\color[rgb]{1,0,0}\makebox(0,0)[lt]{\lineheight{1.25}\smash{\begin{tabular}[t]{l}$f^{q_{n+1}}(\gamma)$\end{tabular}}}}%
  \end{picture}%
\endgroup%

        }
	\vspace{-5pt}
        \caption{The radial curves in black illustrate $\Gamma$, see \eqref{boundary curves 0}, which determine a tiling $\cT_0,\ldots,\cT_{q_{n+1}-1}$ in 
        Definition \ref{D:tiling}.   
        The red curve $\gamma_*:=f^{q_{n+1}}(\gamma)$ 
        potentially intersects $\gamma$.   After modifying $f$ (to $g$ as in 
        \eqref{D:g}) the red curve coincides perfectly with $\gamma$ and the curves in $\Gamma$ remain unchanged.}
        \label{F:f to g}
        \end{figure}
\begin{defi}\label{D:Omega}
Let $\Omega\subset\A$ denote the closed region 
\aryst
			\Omega:= \big(f^{q_{n}}(\gamma),f^{q_{n+1}-q_{n}}(\gamma)\big), 
\earyst
and $\cB,\cC\subset\A$ be the following decomposition into closed subsets:
\ary\label{E:B and C} 
		\cB:=\big(f^{q_{n}}(\gamma),\gamma\big),\qquad \cC:=\big(\gamma,f^{q_{n+1}-q_{n}}(\gamma)\big)
\eary
with disjoint interiors.  
\end{defi}
%
%

\begin{lemma}\label{L:gamma* in Omega}
$f^{q_{n+1}}(\gamma)\subset\Int(\Omega)$.   
\end{lemma}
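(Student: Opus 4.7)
The plan is to lift the three curves $f^{q_n}(\gamma), f^{q_{n+1}-q_n}(\gamma), f^{q_{n+1}}(\gamma)$ to $\tilde\A$ and verify an ordering of four disjoint arcs. Let $F$ denote the lift of $f$ with $\rho(F)=\a$, fix a lift $\tilde\gamma\subset\tilde\A$ of $\gamma$, and introduce the three distinguished lifts
\[
\tilde\gamma_{-} := T^{-p_n}F^{q_n}(\tilde\gamma), \quad \tilde\gamma_{+} := T^{p_n-p_{n+1}}F^{q_{n+1}-q_n}(\tilde\gamma),\quad \tilde\gamma_{*} := T^{-p_{n+1}}F^{q_{n+1}}(\tilde\gamma)
\]
of $f^{q_n}(\gamma)$, $f^{q_{n+1}-q_n}(\gamma)$ and $f^{q_{n+1}}(\gamma)$ respectively. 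These are chosen so that their endpoints on $\tilde B_1$ sit as close as possible to $\tilde\gamma\cap\tilde B_1$ from each side. The horizontal width of the closed region between $\tilde\gamma_-$ and $\tilde\gamma_+$ on $\tilde B_1$ will turn out to be at most $2\beta_n+\beta_{n+1}<1$, so $\pi$ restricts to a homeomorphism from this closed region onto $\Omega$, and it suffices to show that $\tilde\gamma_{*}$ lies strictly in the open region bounded by $(\tilde\gamma_{-},\tilde\gamma_{+})$.

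First I would verify pairwise disjointness of the four arcs $\tilde\gamma,\tilde\gamma_{-},\tilde\gamma_{+},\tilde\gamma_{*}$. Because $\gamma$ is a $q_{n+1}$-good Brouwer curve, the lifted property $F^k(\tilde\gamma)\cap T^m(\tilde\gamma)=\emptyset$ holds for every $1\leq |k|\leq q_{n+1}-1$ and $m\in\Z$. Since $F$ and $T$ commute, each pairwise intersection among the four arcs reduces to a statement of this form with $|k|\in\{q_n,\,q_{n+1}-q_n,\,|2q_n-q_{n+1}|\}$, and each of these values lies in $[1,q_{n+1}-1]$ by \eqref{q iteration}. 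Next I would determine the order of the four endpoints on $\tilde B_1$. Because $f|_{B_1}$ is an orientation-preserving circle diffeomorphism with irrational rotation number $\a$, it is semi-conjugate to the rigid rotation $R_\a$ via a monotone map $h$; in particular, the order on $\tilde B_1$ of any finite set of points in the orbit of $\tilde\gamma\cap\tilde B_1$ coincides with the order on $\R$ of the corresponding rotation-orbit. Using \eqref{sign} and $n$ odd, the four offsets (in the $h$-parametrization) are
\[
-\beta_n,\qquad 0,\qquad \beta_{n+1},\qquad \beta_n+\beta_{n+1}
\]
for $\tilde\gamma_{-},\tilde\gamma,\tilde\gamma_{*},\tilde\gamma_{+}$; these are strictly increasing and contained in an interval of length less than $1$. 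Combined with pairwise disjointness, this yields the linear ordering $\tilde\gamma_{-}<\tilde\gamma<\tilde\gamma_{*}<\tilde\gamma_{+}$ in $\tilde\A$, so $\tilde\gamma_{*}$ lies in the open region bounded by $(\tilde\gamma_{-},\tilde\gamma_{+})$. Projecting via $\pi$ then gives $f^{q_{n+1}}(\gamma)\subset\Int(\Omega)$.

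The main subtlety is the second step: the naive expectation that $F^k$ moves a boundary point by $k\a$ is only literally true after passing to the semi-conjugacy with $R_\a$, and the precise expressions for the four offsets depend on the parity of $n$ through the sign in \eqref{sign}. All of this is routine bookkeeping, and notably the argument is purely combinatorial once Theorem \ref{cor smooth return domain new version} has been applied; no further arithmetic condition on $q_{n+2}$ is needed at this point.
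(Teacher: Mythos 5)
Your strategy is essentially the paper's, transplanted to the universal cover: the paper argues that the endpoint of $f^{q_{n+1}}(\gamma)$ on $B_1$ is combinatorially closest to that of $\gamma$ among the first $q_{n+1}$ iterates (so the curve meets $\Int(\Omega)$), and then gets $f^{q_{n+1}}(\gamma)\cap f^{q_n}(\gamma)=\emptyset$ and $f^{q_{n+1}}(\gamma)\cap f^{q_{n+1}-q_n}(\gamma)=\emptyset$ by applying $f^{q_n}$, resp.\ $f^{q_{n+1}-q_n}$, to disjointness relations already available inside $\Gamma$. Your computation of the offsets $-\beta_n,\,0,\,\beta_{n+1},\,\beta_n+\beta_{n+1}$ via \eqref{sign} and the Poincar\'e semi-conjugacy is correct, as is your observation that no largeness of $q_{n+2}$ enters here.

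There is, however, one false intermediate claim. The pair $(\tilde\gamma,\tilde\gamma_*)$ corresponds to $k=q_{n+1}$, which is not among $\{q_n,\ q_{n+1}-q_n,\ |2q_n-q_{n+1}|\}$ and is not covered by $q_{n+1}$-goodness: nothing guarantees $f^{q_{n+1}}(\gamma)\cap\gamma=\emptyset$, and in general this fails --- that is precisely the defect the paper subsequently repairs by replacing $f$ with $g=\sigma\circ f$ so that \eqref{sigma closing condition} holds (see Figure \ref{F:f to g}, whose caption notes that $\gamma_*$ ``potentially intersects $\gamma$''). So ``pairwise disjointness of the four arcs'' is an over-claim, and the middle link $\tilde\gamma<\tilde\gamma_*$ of your chain is unjustified and may be false. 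Fortunately the conclusion does not need it: you only require $\tilde\gamma_-<\tilde\gamma_*<\tilde\gamma_+$, and the pairs $(\tilde\gamma_*,\tilde\gamma_-)$ and $(\tilde\gamma_*,\tilde\gamma_+)$ correspond to $|k|=q_{n+1}-q_n$ and $|k|=q_n$, both in $[1,q_{n+1}-1]$, so disjointness from all $T$-translates does hold for them; combined with the strict inequalities $-\beta_n<\beta_{n+1}<\beta_n+\beta_{n+1}$ and the projection argument, this gives the lemma. Your proof is therefore correct once $\tilde\gamma$ is removed from the list of curves asserted to be mutually disjoint and from the claimed ordering.
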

\begin{proof} 
By the theory of circle homeomorphisms $f^{q_{n+1}}(\gamma)$ meets $B_1\subset\partial\A$ at a point 
strictly closer to $x:=\gamma\cap B_1$ than $f^j(x)$ for all $0< j <q_{n+1}$ (for the combinatorial distance).      Thus $f^{q_{n+1}}(\gamma)$ has non-empty intersection with $\Int(\Omega)$.   
So it suffices to show 
\aryst
			f^{q_{n+1}}(\gamma)\cap f^{q_n}(\gamma)=\emptyset,\qquad f^{q_{n+1}}(\gamma)\cap f^{q_{n+1}-q_{n}}(\gamma)=\emptyset.  
\earyst 
The first condition follows by applying $f^{q_n}$ to $\gamma\cap f^{q_{n+1}-q_{n}}(\gamma)=\emptyset$, the second by applying $f^{q_{n+1}-q_n}$ to $f^{q_n}(\gamma)\cap\gamma=\emptyset$. 
\end{proof} 


Let us consider a diffeomorphism of the form 
\ary\label{D:g}
					g:=\sigma\circ f:\A\to\A
\eary
where $\sigma\in\diff^\infty(\A)$ satisfies 
\ary
		&\mathrm{supp}(\sigma)\subset\Int(\Omega) \label{sigma support}\\
		& \sigma\big(\gamma_*\big)=\gamma,     \label{sigma closing condition}
\eary
where $\mathrm{supp}(\sigma):=\overline{\{x\in\A | \sigma(x)\neq x\}}$ is the support of $\sigma$, and where 
\aryst
		\gamma_*:=f^{q_{n+1}}(\gamma).
\earyst
Any such $\sigma$ is isotopic to $\id_{\A}$, hence so is $g$.   

\begin{lemma}[$g$ permutes the curves in $\Gamma$]\label{L:sigma 1}
The map $g$ agrees with $f$ on $\A\backslash f^{-1}(\Int(\Omega))$ and permutes the curves in $\Gamma$ cyclically: 
\begin{enumerate}
 \item $g^j(\gamma)=f^j(\gamma)$ for all $0\leq j\leq q_{n+1}-1$.    
 \item $g^{q_{n+1}}(\gamma)=\gamma$, so $g(\Gamma)=\Gamma$.   
 \item Hence the regions $\cB, \cC$ from \ref{E:B and C} are described in terms of $g$ as:  
 \ary\label{E:B and C in g}
 			\cB=\big(g^{q_{n}}(\gamma),\gamma\big),\qquad \cC=\big(\gamma,g^{q_{n+1}-q_{n}}(\gamma)\big).
 \eary
\end{enumerate}
\end{lemma}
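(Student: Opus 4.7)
My plan is to prove the three items sequentially by a straightforward induction, with the crucial input being Lemma \ref{L:closest curves} together with the mutual disjointness of the curves in $\Gamma$.

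The first observation, that $g$ agrees with $f$ on $\A \setminus f^{-1}(\Int(\Omega))$, is immediate from $g = \sigma \circ f$ and $\mathrm{supp}(\sigma) \subset \Int(\Omega)$: if $f(x) \notin \Int(\Omega)$ then $\sigma(f(x)) = f(x)$. So on such $x$, $g = f$.

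For item (1) I would proceed by induction on $j$. The base case $j=0$ is trivial. For the inductive step, assume $g^j(\gamma) = f^j(\gamma)$ with $0 \leq j \leq q_{n+1} - 2$. Then $g^{j+1}(\gamma) = \sigma(f(f^j(\gamma))) = \sigma(f^{j+1}(\gamma))$, so it suffices to show that $\sigma$ is the identity on $f^{j+1}(\gamma)$, i.e.\ that $f^{j+1}(\gamma) \cap \Int(\Omega) = \emptyset$. Now $f^{j+1}(\gamma)$ is one of the curves $f(\gamma), \ldots, f^{q_{n+1}-1}(\gamma)$ in $\Gamma$, and by Lemma \ref{L:closest curves} the curves $f^{q_n}(\gamma)$ and $f^{q_{n+1}-q_n}(\gamma)$ are the two closest to $\gamma$ in $\Gamma$. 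Therefore $\gamma$ is the only curve of $\Gamma$ lying strictly inside $\Omega$; every other $f^i(\gamma)$ with $1 \leq i \leq q_{n+1}-1$ either equals one of the two boundary curves of $\Omega$ (and so lies in $\partial\Omega$, hence outside $\Int(\Omega)$), or is disjoint from $\Omega$ by the mutual disjointness of $\Gamma$ together with the fact that $f^{j+1}(\gamma)$ connects $B_0$ to $B_1$ and so cannot cross $\partial\Omega$. In either case $f^{j+1}(\gamma) \cap \Int(\Omega) = \emptyset$, closing the induction.

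For item (2), I apply $g$ once more and compute directly:
\[ g^{q_{n+1}}(\gamma) = g\big(g^{q_{n+1}-1}(\gamma)\big) = g\big(f^{q_{n+1}-1}(\gamma)\big) = \sigma\big(f^{q_{n+1}}(\gamma)\big) = \sigma(\gamma_*) = \gamma, \]
using item (1) for the second equality, $g = \sigma \circ f$ for the third, and the closing condition \eqref{sigma closing condition} for the last. This also shows $g(\Gamma) = \Gamma$, since by item (1), $g$ sends $f^j(\gamma) = g^j(\gamma)$ to $g^{j+1}(\gamma) = f^{j+1}(\gamma)$ for $0 \leq j \leq q_{n+1}-2$, while $g$ sends $f^{q_{n+1}-1}(\gamma)$ back to $\gamma$. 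Item (3) is then a direct consequence of item (1): simply substitute $g^{q_n}(\gamma)$ for $f^{q_n}(\gamma)$ and $g^{q_{n+1}-q_n}(\gamma)$ for $f^{q_{n+1}-q_n}(\gamma)$ in the definitions \eqref{E:B and C} of $\cB$ and $\cC$.

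Essentially no obstacle arises here; the proof is entirely combinatorial/topological, and the only nontrivial ingredient is the ordering information from Lemma \ref{L:closest curves}, which guarantees that the modification $\sigma$ is supported in a region avoided by all the intermediate iterates $f(\gamma), \ldots, f^{q_{n+1}-1}(\gamma)$.
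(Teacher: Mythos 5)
Your proposal is correct and follows essentially the same route as the paper: both reduce item (1) to the observation that the intermediate curves $f^j(\gamma)$, $1\leq j\leq q_{n+1}-1$, avoid $\Int(\Omega)$ (hence the support of $\sigma$), which is exactly the content of Lemma \ref{L:closest curves}, and then obtain item (2) from one further application of $g$ via the closing condition \eqref{sigma closing condition}. Your inductive phrasing versus the paper's direct statement is an inessential difference.
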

\begin{proof} 
The claim that $g=f$ on $\A\backslash f^{-1}(\Int(\Omega))$ is because $\sigma$ is supported in $\Int(\Omega)$ by \eqref{sigma support}.

Hence to see item (1):  For each $0\leq j\leq q_{n+1}-1$ it suffices to know that $f^{j-1}(\gamma)\subset \A\backslash f^{-1}(\Int(\Omega))$.    
Equivalently, that $f^{j}(\gamma)\subset \A\backslash \Int(\Omega)$.   Equivalently that $\Gamma\subset \A\backslash \Int(\Omega)$, which is 
Lemma \ref{L:closest curves}.  

%
%

Item (2) is then: $g^{q_{n+1}}(\gamma)=g\circ g^{q_{n+1}-1}(\gamma)=g\circ f^{q_{n+1}-1}(\gamma)$ from item (1), followed by 
$g\circ f^{q_{n+1}-1}(\gamma)=\sigma\circ f\circ f^{q_{n+1}-1}(\gamma)=\sigma(\gamma_*)=\gamma$ by \eqref{sigma closing condition}. 

Item (3) is immediate from Item (1).
\end{proof}

\begin{lemma}\label{L:sigma 2}
Let $\cT\subset\A$ be the closure of any component of the complement $\A\backslash\Gamma$.    Then  
$\cT$ is a fundamental domain for $g$ in the following sense: 
 \ary\label{decomp}
 		\cT \cup g(\cT) \cup \ldots \cup g^{q_{n+1}-1}(\cT) = \A 
\eary
with mutually disjoint interiors.   
\end{lemma}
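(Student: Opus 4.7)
The plan is to exploit the fact that $g$ cyclically permutes $\Gamma$ to deduce that it also cyclically permutes the $q_{n+1}$ closed tiles that make up $\A$. First I would observe that $\Gamma$ consists of $q_{n+1}$ pairwise disjoint simple arcs from $B_0$ to $B_1$, so that $\A\setminus \Gamma$ has exactly $q_{n+1}$ connected components, and the closure of each such component is a topological rectangle whose two ``vertical'' sides are curves of $\Gamma$ that are cyclically adjacent on $B_1$. Label these closures $\cT_0, \ldots, \cT_{q_{n+1}-1}$ in cyclic order along $B_1$.

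Next, since $g$ is a homeomorphism of $\A$ with $g(\Gamma)=\Gamma$ by Lemma \ref{L:sigma 1}(2), it maps components of $\A\setminus\Gamma$ bijectively to components, and therefore induces a permutation $\pi$ on the set $\{\cT_0,\ldots,\cT_{q_{n+1}-1}\}$. To obtain \eqref{decomp} it suffices to show that $\pi$ is a $q_{n+1}$-cycle, since then for any single tile $\cT$ the orbit $\cT, g(\cT), \ldots, g^{q_{n+1}-1}(\cT)$ enumerates all $q_{n+1}$ tiles, their union is $\A$, and their interiors are pairwise disjoint.

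To establish that $\pi$ is a $q_{n+1}$-cycle, I would restrict attention to the action of $g$ on $B_1$. By Lemma \ref{L:sigma 1}(2), $g$ acts on $\Gamma$ as the $q_{n+1}$-cycle $\gamma \mapsto f(\gamma) \mapsto \cdots \mapsto f^{q_{n+1}-1}(\gamma) \mapsto \gamma$, so on the $q_{n+1}$ endpoints $\Gamma\cap B_1$ the circle homeomorphism $g|_{B_1}$ also acts as a $q_{n+1}$-cycle. Since $g$ is isotopic to the identity on $\A$, this restriction is orientation-preserving on $B_1$, so it preserves the cyclic order of the $q_{n+1}$ points. An elementary combinatorial fact then forces such an action to be a ``rotation'' by some $k\in\Z/q_{n+1}\Z$ with $\gcd(k,q_{n+1})=1$. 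Because the tiles are in bijective order-preserving correspondence with the arcs of $B_1$ cut out between consecutive points of $\Gamma\cap B_1$, the induced permutation $\pi$ is a rotation by the same $k$, hence a full $q_{n+1}$-cycle.

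The only mildly delicate step is the combinatorial observation that an orientation-preserving homeomorphism of $B_1$ which cyclically permutes $q_{n+1}$ ordered points must act on them as a rotation of coprime order; but this is standard circle-dynamics and presents no real obstacle. Everything else reduces to elementary point-set topology of the annulus together with Lemma \ref{L:sigma 1}.
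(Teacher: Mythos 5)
Your argument is correct and rests on the same two ingredients as the paper's proof: Lemma \ref{L:sigma 1} (so that $g$ permutes the components of $\A\backslash\Gamma$ and acts as a $q_{n+1}$-cycle on the curves of $\Gamma$) together with orientation preservation to transfer that cycle structure to the tiles. The paper phrases this as a direct contradiction ($g^k(U)=U$ for $0<k<q_{n+1}$ would force $g^k(\gamma)=f^k(\gamma)=\gamma$, violating disjointness of $\Gamma$) rather than via the rotation combinatorics on $B_1$, but the content is essentially identical.
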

\begin{proof} 
By Lemma \ref{L:sigma 1} $\A\backslash\Gamma$ is $g$-invariant and $g$ induces a bijection on the set of 
connected components of $\A\backslash\Gamma$.  
Let $U$ be the interior of the region bounded by $(f^{q_n}(\gamma),\gamma)$.     Then if $U\cup g(U)\cup\cdots\cup g^{q_{n+1}-1}(U)$ 
is not equal to $\A\backslash\Gamma$ then there exists $k\in \{1,\ldots,q_{n+1}-1\}$ so that $g^k(U)=U$.   
Comparing the right sides of these two regions gives $g^k(\gamma)=\gamma$, since $g$ is isotopic to the identity, hence 
$f^k(\gamma)=\gamma$ by Lemma \ref{L:sigma 1}, contradicting the disjointness of the curves in $\Gamma$.    
Thus $U\cup g(U)\cup\cdots\cup g^{q_{n+1}-1}(U)=\A\backslash\Gamma$, and the Lemma follows.  
\end{proof}
\begin{defi}[Tiling of $\A$]\label{D:tiling}
Using cyclic indices set 
\ary\label{tiling}
			\cT_j:=g^j(\cB)\qquad\forall j\in\Z_{q_{n+1}}.
\eary  
\end{defi}

By Lemma \ref{L:sigma 2}, $\A=\bigcup_{j\in\Z_{q_{n+1}}}\cT_j$ 
is a decomposition of $\A$ into topological rectangles whose interiors are mutually disjoint, and $\{\Int(\cT_0),\ldots,\Int(\cT_{q_{n+1}})\}$ are the components  
of $\A\backslash\Gamma$.    Note that $\cB=(g^{q_n}(\gamma),\gamma)$ implies $\cT_{q_{n+1}-q_{n}}:=g^{q_{n+1}-q_{n}}(\cB)=(g^{q_{n+1}}(\gamma),g^{q_{n+1}-q_{n}}(\gamma))=(\gamma,g^{q_{n+1}-q_{n}}(\gamma))=\cC$.   Thus 
\ary\label{cC as R}
				\cT_0=\cB,\qquad \cT_{q_{n+1}-q_{n}}=\cC.   
\eary
From \eqref{tiling}, $g^{q_{n+1}-q_{n}}$ restricts to a diffeomorphism from $\cT_0$ to $\cT_{q_{n+1}-q_{n}}$, i.e.\ from $\cB$ to $\cC$.   
Hence, as $g^{q_{n+1}-q_{n}}=g^{-q_{n}}$, the inverse map $g^{q_n}$ restricts to a diffeomorphism 
\ary
				g^{q_{n}}:\cC\to\cB.  
\eary 
%
%
Finally, it follows easily from Lemma \ref{L:sigma 1} that
\ary\label{L:g}
			g|_{\cT_j}\ =\ \left\{\begin{aligned}
						& \sigma\circ f &   &  \mbox{ if }\  j\in\{-1,-q_n-1\}\\
						& f   & 			& \mbox{ otherwise.}\\
						\end{aligned}\right.
\eary

\subsection{A $q_{n+1}$-periodic deformation}\label{S:g periodic}
In the previous section we saw how to deform $f$ to a diffeomorphism of the form $g:=\sigma\circ f$ 
so that $g$ cyclically permutes the curves in $\Gamma$.  
In this section we state further conditions on $\sigma$ that ensure 
$g$ will be periodic. 

Recall $\Omega=\cB\cup\cC$ is a decomposition along the curve $\gamma$.   By Lemma \ref{L:gamma* in Omega} there is also a 
decomposition along $\gamma_*=f^{q_{n+1}}(\gamma)$ denoted by  
\aryst 
		\Omega=\cB'\cup\cC',        
\earyst 
where $\cB',\cC'\subset\A$ are the closed regions bounded by $\big(f^{q_{n}}(\gamma),\gamma_*\big)=\big(g^{q_{n}}(\gamma),\gamma_*\big)$ 
and $\big(\gamma_*,f^{q_{n+1}-q_{n}}(\gamma)\big)=\big(\gamma_*,g^{q_{n+1}-q_{n}}(\gamma)\big)$ respectively.   
By \eqref{sigma closing condition} $\sigma$ maps $\cB', \cC'$ to $\cB, \cC$ respectively.   Thus $\sigma$ restricts to a pair of diffeomorphisms 
\ary\label{D:L and R}
				\sigma_L:\cB'\to\cB, \qquad \sigma_R:\cC'\to\cC. 
\eary
\vspace{-12pt}
\begin{figure}[h]
        \centering
        \scalebox{0.75}{
        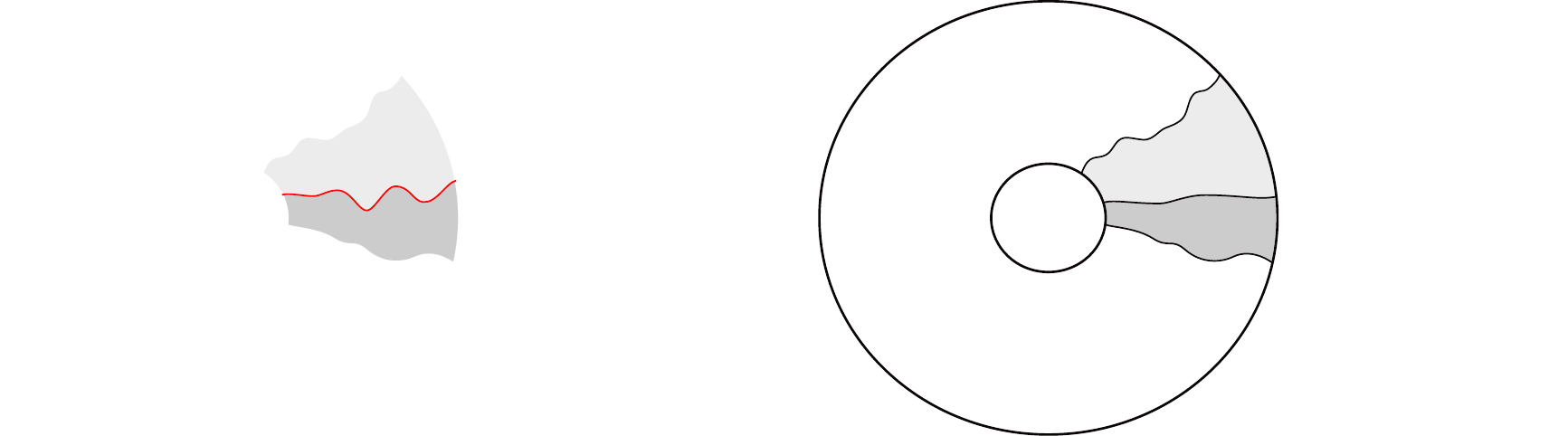
        }
        \caption{Illustrating the deformation $\sigma:\A\to\A$, which is supported on the shaded regions, see \eqref{D:L and R}.} 
        \label{F:f to g 2}
\end{figure}

\noindent In terms of $\sigma_L$ and $\sigma_R$, \eqref{L:g} can be refined to: 
\ary\label{L:g 2}
			g|_{\cT_j}\ =\ \left\{\begin{aligned}
						& f   & 			& \mbox{ if }\ 0\leq j\leq q_{n+1}-q_n-2\\
						& \sigma_R\circ f &   &  \mbox{ if }\  j= q_{n+1}-q_n-1\\
						& f   & 			& \mbox{ if }\ q_{n+1}-q_n\leq j\leq q_{n+1}-2\\
						& \sigma_L\circ f &   &  \mbox{ if }\  j= q_{n+1}-1.
						\end{aligned}\right.
\eary
Every $g$-orbit passes through $\cT_0$.   For $x\in\cT_0$, applying \eqref{L:g 2} successively to $g^j(x)\in\cT_j$ we find that 
$g^{q_{n+1}}=\id_{\A}$ if and only if 
\ary\label{E:R from L}
					\sigma^{-1}_L\ =\ f^{q_n}\circ \sigma_R\circ f^{q_{n+1}-q_{n}}|_{\cB}.  
\eary  
\begin{lemma}\label{L:sigma periodic 2}
Let $\sigma_L:\cB'\to\cB$ be a smooth diffeomorphism satisfying 
\ary\label{sigmaL conditions}
					\sigma_L=\left\{\begin{aligned}
							& \id \qquad\quad\ \  \mbox{near to the left side of }\  \cB' \\
							& f^{-q_{n+1}}\qquad \mbox{near to the right side of  }\  \cB'.   
							\end{aligned}\right.
\eary
Then if we set  
\ary\label{E:R from L 2}
					\sigma^{-1}_R\ :=\ f^{q_{n+1}-q_{n}}\circ \sigma_L\circ f^{q_{n}}|_{\cC}
\eary 
then the map $\sigma:\A\to\A$ defined to be $\sigma_L$ on $\cB'$, to be $\sigma_R$ on $\cC'$, and to be the identity on $\A\backslash\Omega$, 
defines a smooth diffeomorphism on $\A$ that satisfies \eqref{sigma support} and \eqref{sigma closing condition}, 
and $g:=\sigma\circ f$ satisfies $g^{q_{n+1}}=\id_{\A}$.  
\end{lemma}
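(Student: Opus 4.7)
The proof is essentially a verification that the piecewise formula for $\sigma$ is smoothly consistent along the three interface curves $f^{q_n}(\gamma)$, $\gamma_*=f^{q_{n+1}}(\gamma)$ and $f^{q_{n+1}-q_n}(\gamma)$, and that the closing-up relation \eqref{E:R from L 2} is equivalent to \eqref{E:R from L}. No genuine analytic content is required beyond the arithmetic of iterates of $f$ on the boundary curves of $\cB$, $\cC$, $\cB'$, $\cC'$.

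I would first check that \eqref{E:R from L 2} indeed defines a smooth diffeomorphism $\sigma_R:\cC'\to\cC$. Since $f^{q_n}(\cC)=(f^{q_n}(\gamma),f^{q_{n+1}}(\gamma))=\cB'$, and $\sigma_L$ sends $\cB'$ onto $\cB$ by hypothesis, and $f^{q_{n+1}-q_n}(\cB)=(f^{q_{n+1}}(\gamma),f^{q_{n+1}-q_n}(\gamma))=\cC'$, the composition $f^{q_{n+1}-q_n}\circ\sigma_L\circ f^{q_n}$ is a smooth diffeomorphism $\cC\to\cC'$, and we take $\sigma_R$ to be its inverse.

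Next I would verify smooth matching of the three pieces of $\sigma$ along the interface curves. Along the left boundary $f^{q_n}(\gamma)=\partial\cB'\cap\partial\Omega$, the assumption $\sigma_L\equiv\id$ matches the identity on $\A\backslash\Omega$. Along $\gamma_*=\partial\cB'\cap\partial\cC'$, the assumption $\sigma_L\equiv f^{-q_{n+1}}$ on the $\cB'$-side must match $\sigma_R$ on the $\cC'$-side: for $x\in\cC$ in a neighborhood of $\gamma$, the point $f^{q_n}(x)$ lies near $f^{q_n}(\gamma)=\partial\cB'$ (left), where $\sigma_L=\id$, so \eqref{E:R from L 2} yields $\sigma_R^{-1}(x)=f^{q_{n+1}-q_n}(f^{q_n}(x))=f^{q_{n+1}}(x)$, i.e.\ $\sigma_R\equiv f^{-q_{n+1}}$ near $\gamma_*$, matching $\sigma_L$ on a full neighborhood. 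Along the right boundary $f^{q_{n+1}-q_n}(\gamma)=\partial\cC'\cap\partial\Omega$, the reverse reasoning gives $\sigma_R\equiv\id$: for $x\in\cC$ near this boundary, $f^{q_n}(x)$ is near $\gamma_*=\partial\cB'$ (right), where $\sigma_L=f^{-q_{n+1}}$, so $\sigma_R^{-1}(x)=f^{q_{n+1}-q_n}\circ f^{-q_{n+1}}\circ f^{q_n}(x)=x$, matching the identity on $\A\backslash\Omega$. So $\sigma$ is a smooth diffeomorphism of $\A$, \eqref{sigma support} is immediate from the construction, and $\sigma(\gamma_*)=\sigma_L(\gamma_*)=f^{-q_{n+1}}(\gamma_*)=\gamma$ gives \eqref{sigma closing condition}.

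Finally, for $g^{q_{n+1}}=\id_\A$ I would appeal to the equivalence stated just above the Lemma: rewriting \eqref{E:R from L 2} as $\sigma_R=f^{-q_n}\circ\sigma_L^{-1}\circ f^{-(q_{n+1}-q_n)}$ and rearranging gives $\sigma_L^{-1}=f^{q_n}\circ\sigma_R\circ f^{q_{n+1}-q_n}|_\cB$, which is precisely \eqref{E:R from L}. The only slightly subtle step is the propagation of the two prescribed boundary behaviors of $\sigma_L$ through the formula \eqref{E:R from L 2} into the matching behaviors of $\sigma_R$ on the opposite sides of $\gamma_*$ and $f^{q_{n+1}-q_n}(\gamma)$; once this bookkeeping is done, the rest is automatic.
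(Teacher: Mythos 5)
Your proof is correct and follows the same route as the paper: verify that \eqref{E:R from L 2} sends $\cC'$ to $\cC$, that $\sigma_R$ equals $f^{-q_{n+1}}$ near $\gamma_*$ and the identity near the right edge of $\cC'$ (so the three pieces match smoothly), and then deduce $g^{q_{n+1}}=\id_\A$ from the equivalence with \eqref{E:R from L}. The paper's proof is just a terser version of the same bookkeeping, and your computations of the boundary behaviors are the ones it leaves implicit.
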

\begin{proof}
It suffices to check that $\sigma_R$ defined by \eqref{E:R from L 2} is the identity near to the right end of $\cC'$ and coincides with $f^{-q_{n+1}}$ near to the left end of $\cC'$. 
Then it follows that $\sigma_L$ and $\sigma_R$ both equal $f^{-q_{n+1}}$ near to the shared boundary $\gamma_*$.    Using \eqref{E:R from L} 
the claim $g^{q_{n+1}}=\id_{\A}$ is easy to verify.   
\end{proof}

\subsection{A $q_{n+1}$-periodic approximation}\label{S:periodic approx}
In the last section we gave conditions on the deformation $\sigma$ that ensure $g:=\sigma\circ f$ 
is periodic; $g^{q_{n+1}}=\id_{\A}$.   

In this section we show that $\sigma$ can also be made ''small'', so that $g$ 
is close to $f$.   This requires a further condition on the rotation number of $f$:  

\begin{prop}\label{P:sigma small}
Given $\epsilon>0$, if $q_{n+2}$ is sufficiently large depending on $q_{n+1}$ and $(r_0,M,\epsilon)$, there exists $\sigma\in\diff^\infty(\A)$ so that the deformation 
$g:=\sigma\circ f :\A\to\A$ satisfies $g^{q_{n+1}}=\id_{\A}$ and 
\aryst
				d_{\diff^{r_0}(\A)}(\sigma,\id)<\epsilon.  
\earyst
\end{prop}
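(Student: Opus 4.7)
The plan is to invoke Lemma \ref{L:sigma periodic 2}: it suffices to construct the diffeomorphism $\sigma_L : \cB' \to \cB$ satisfying \eqref{sigmaL conditions}, to let $\sigma_R$ be determined by \eqref{E:R from L 2}, and to verify that the resulting $\sigma$ (equal to $\sigma_L$ on $\cB'$, $\sigma_R$ on $\cC'$, and $\id$ outside $\Omega$) is $\diff^{r_0}(\A)$-close to the identity. The boundary conditions at $\partial\Omega$ and the smooth matching across $\gamma_*$ are already built into Lemma \ref{L:sigma periodic 2}.

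The crucial quantitative input is that $f^{q_{n+1}}$ is $\diff^{r_0}$-close to the identity when $q_{n+2}$ is large. Indeed $\|\rho(f^{q_{n+1}})\|_{\R/\Z} = \beta_{n+1} < 1/q_{n+2}$ and $\|f^{q_{n+1}}\|_{\diff^{r_0+1}(\A)} \le M^{q_{n+1}}$, so Theorem \ref{thm aflxz} gives $d_{\diff^{r_0}(\A)}(f^{\pm q_{n+1}}, \id) \le A_{r_0}(1/q_{n+2}, M^{q_{n+1}})$, which tends to $0$ as $q_{n+2} \to \infty$ with $(r_0, M, q_{n+1})$ fixed. Independently, Theorem \ref{cor smooth return domain new version} (applied with $r_0$ replaced by $r_0+1$, using the $\diff^{r_0+6}$-budget on $f$) supplies admissible coordinates $\Phi : [0,1]^2 \to \cB$ with respect to $f^{-q_n}$ with $\|\Phi\|_{\diff^{r_0+1}([0,1]^2)} \le W_{(r_0+1, M)}(q_{n+1})$, a bound depending only on $(r_0, M, q_{n+1})$; by Remark \ref{rema unique continuation} it extends to a neighborhood of $[0,1]^2$ with comparable bounds.

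The construction of $\sigma_L$ proceeds by a cut-off interpolation in $\Phi$-coordinates. Set $\tilde\tau := \Phi^{-1} \circ f^{-q_{n+1}} \circ \Phi$, defined on a neighborhood of $\{1\} \times [0,1]$; by Lemma \ref{L:composition}, the $\diff^{r_0}$-smallness of $f^{-q_{n+1}}$ transfers (since $\Phi$ has a uniform $\diff^{r_0+1}$-bound) to $\diff^{r_0}$-smallness of $\tilde\tau - \id$. Pick a smooth cut-off $\chi : [0,1] \to [0,1]$ with $\chi \equiv 0$ on $[0,1/3]$ and $\chi \equiv 1$ on $[2/3, 1]$, and set
\[
\tilde\sigma_L(u, v) := (u, v) + \chi(u)\bigl(\tilde\tau(u, v) - (u, v)\bigr),
\]
which is a $C^{r_0}$-small diffeomorphism, equal to $\id$ near the left edge and to $\tilde\tau$ near the right edge. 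Pushing forward via $\Phi$ and restricting to $\cB'$ (whose image in coordinates lies close to $[0,1]^2$ because $\gamma_* = f^{q_{n+1}}(\gamma)$ is $C^{r_0}$-close to $\gamma$) yields $\sigma_L$ satisfying \eqref{sigmaL conditions} with $d_{\diff^{r_0}(\cB')}(\sigma_L, \id) \le \eta$, for any preassigned $\eta > 0$, provided $q_{n+2}$ is sufficiently large.

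The main obstacle is showing that $\sigma_R$ inherits smallness from $\sigma_L$ despite being defined by the formula $\sigma_R = f^{-q_n} \sigma_L^{-1} f^{-(q_{n+1}-q_n)}|_{\cC'}$, which involves composition with high iterates of $f$ whose $\diff^{r_0}$-norms are only bounded by $M^{q_{n+1}}$. The key observation is that substituting $\sigma_L = \id$ would produce $\sigma_R = f^{-q_{n+1}}$, itself $\diff^{r_0}$-close to identity by Step 1. Writing $\sigma_L = \id + \xi_L$ with $\|\xi_L\|_{C^{r_0}} \le \eta$ and expanding, Lemma \ref{L:composition} gives
\[
d_{\diff^{r_0}}(\sigma_R, f^{-q_{n+1}}) \le C(r_0, M, q_{n+1})\,\eta,
\]
and hence $d_{\diff^{r_0}(\cC')}(\sigma_R, \id) \le C(r_0, M, q_{n+1})\,\eta + d_{\diff^{r_0}}(f^{-q_{n+1}}, \id)$. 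Choosing first $\eta < \epsilon/(2C(r_0, M, q_{n+1}))$ and then $q_{n+2}$ large enough that both $d_{\diff^{r_0}}(f^{\pm q_{n+1}}, \id) < \epsilon/2$ and the earlier step succeeds, we obtain $d_{\diff^{r_0}(\A)}(\sigma, \id) < \epsilon$ as required.
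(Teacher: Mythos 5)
Your proposal is correct and follows essentially the same route as the paper's proof: reduction to constructing $\sigma_L$ via Lemma \ref{L:sigma periodic 2}, controlling $\sigma_R$ by combining Lemma \ref{L:composition} with the $\diff^{r_0}$-smallness of $f^{\pm q_{n+1}}$ from Theorem \ref{thm aflxz}, and building $\sigma_L$ by a fixed cut-off interpolation between $\id$ and $\Phi^{-1}\circ f^{-q_{n+1}}\circ\Phi$ in the uniformly bounded admissible coordinates supplied by Theorem \ref{cor smooth return domain new version}. The only cosmetic difference is that you write the interpolation formula explicitly, whereas the paper isolates it as an abstract gluing step on the uniformly non-degenerate domain $\cR$.
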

By Lemma \ref{L:sigma periodic 2} it suffices to construct $\sigma_L:\cB'\to\cB$ satisfying \eqref{sigmaL conditions} and sufficiently close to the identity.   
It is convenient to transfer the construction from $\B'$, which may degenerate, to the fixed region $[0,1]^2$, via the change of coordinates provided by 
Theorem \ref{cor smooth return domain new version}, before transferring back to $\B'$. 

As preparation for the proof of Proposition \ref{P:sigma small}, recall by Theorem \ref{cor smooth return domain new version}
there exists a smooth diffeomorphism $\phi:[0,1]^2\to\cB$ satisfying 
\ary\label{phi bounds}
			\|\phi\|_{\diff^{r_0+1}([0,1]^2)}\ \leq\ W_{(r_0+1,M)}(q_{n+1}), 
\eary 
where $W_{(r_0,M)}$ is the function from \eqref{E:P and W},    
and $\phi$ has constant Jacobian and 
extends smoothly to $\tilde\A$, which we still denote by $\phi$, via the relation 
\ary\label{phi T}
		\phi\circ T=f^{-q_n}\circ\phi.
\eary    
This extended map, which also has constant Jacobian, admits uniform bounds on compact subsets.   In particular  
\ary\label{E:phi bound 2}
					\|\phi\|_{\diff^{r_0+1}([-1,2]\times[0,1])}\ \leq\ \hat{W}_{(r_0,M)}(q_{n+1}), 
\eary
for some increasing $\hat{W}_{(r_0,M)}:\N\to\R_+$.   Now we prove the Proposition. 

\begin{proof} Fix $\epsilon>0$ and $(r_0,M,q_{n+1})$.  

\textbf{Step 1:} By Theorem \ref{thm aflxz},  Lemma \ref{L:composition} and  \eqref{E:M bounds},   if $q_{n+2}$ is sufficiently large depending on $(r_0,M,q_{n+1},\epsilon)$,
there exists $\delta=\delta(r_0,M,q_{n+1},\epsilon)>0$ so that
$d_{\diff^{r_0}}(\sigma_L,\id) < \delta$ implies $d_{\diff^{r_0}(\cC,\cC')}(f^{q_{n+1}-q_n}\circ\sigma_L\circ f^{q_n},\id) < \epsilon$ 
for all $\sigma_L\in \diff^{r_0}(\cB',\cB)$.   Here we used that both factors $f^{q_{n+1}-q_n}$ and $f^{q_n}$ are uniformly bounded in $\diff^{r_0+1}(\A)$ by 
a constant depending on $M$ and $q_{n+1}$.   
Thus by Lemma \ref{L:sigma periodic 2} it suffices to construct a map $\sigma_L:\cB'\to\cB$ satisfying \eqref{sigmaL conditions} with $d_{\diff^{r_0}}(\sigma_L,\id)$ 
small.   More precisely, so that for all $\epsilon_0>0$, if $q_{n+2}$ is sufficiently large depending on $(r_0,M,q_{n+1},\epsilon_0)$, then 
\ary\label{bound sigmaL}
		 				d_{\diff^{r_0}(\cB',\cB)}(\sigma_L,\id) < \epsilon_0.    
\eary
%
%
%
%

\smallskip

\textbf{Step 2:} 
We claim that if $q_{n+2}$ is sufficiently large depending on $M$, then $\gamma_*=f^{q_{n+1}}(\gamma)$ is sufficiently close to 
$\gamma=\phi(\{1\}\times[0,1])$ that 
\ary\label{E:gamma* condition}
							\gamma_*\subset\phi\big((3/4,5/4)\times[0,1]\big).   
\eary
Indeed, let $\cU\subset\A$ be the $(4M)^{-1}$-neighborhood of $\gamma$.   Then $\|D\phi^{-1}\|\leq M$ implies 
$\cU\subset \phi\big((3/4,5/4)\times[0,1]\big)$.   By Theorem \ref{thm aflxz} $d_{H}(\gamma_*,\gamma)<(4M)^{-1}$ 
 if $q_{n+2}$ is sufficiently large, which proves $\gamma_*\subset \cU$ and hence the claim.  Indeed
\aryst
		d_{H}(\gamma_*,\gamma)=d_{H}(f^{q_{n+1}}(\gamma),\gamma)\leq d_{C^0}(f^{q_{n+1}},\id)\leq A_0(\|\rho(f^{q_{n+1}})\|_{\R/\Z}, M^{q_{n+1}}), 
\earyst
and the right hand side can be made as small as we wish if $q_{n+2}$ is sufficiently large depending on $q_{n+1}$ and $M$, 
because $\|\rho(f^{q_{n+1}})\|_{\R/\Z}=\b_{n+1}<1/q_{n+2}$ by \eqref{eq continuefraction}.   
%
By \eqref{E:gamma* condition} we may define the curve
\aryst
								\gamma':=\phi^{-1}(\gamma_*), 
\earyst 
and this satisfies 
\ary\label{bound gamma'}
							\gamma'\subset(3/4,5/4)\times[0,1].
\eary

\smallskip

\textbf{Step 3:} Due to step 2, if $q_{n+2}$ is sufficiently large depending on $q_{n+1}$ and $M$, then the closed region 
$\cR := \big(\{0\}\times[0,1],\,\gamma'\big) \subset\tilde\A$
bounded between $\{0\}\times[0,1]$ and $\gamma'$, is well defined and the restriction of $\phi$ to $\cR$ is a chart for $\cB'$: Indeed
\ary\label{coords for B'}
					\phi:\cR\to\cB' 
\eary
is a diffeomorphism, because $\cB'$ has the same left side as $\cB$, and its right side is $\gamma_*$.  
Note that $\cR$ satisfies the uniform ''non-degeneracy'' condition 
\ary\label{E:nondeg R}
						[0,3/4]\times[0,1]\subset\cR\subset [0,5/4]\times[0,1]  
\eary 
independent of all the maps involved.  

\smallskip

\textbf{Step 4:} We wish to construct a map on $\cR$ that interpolates between the identity near the left side of $\cR$ and the map 
$\phi^{-1}\circ f^{-q_{n+1}}\circ\phi$ near the right side.   This is immediate from the following
claim: For all $\epsilon_2>0$, there exists $\delta_2=\delta_2(r,\epsilon_2)>0$, so that if $\psi:\cR\to\tilde\A$ is any diffeomorphism onto its image 
with $d_{\diff^{r_0}(\cR,\psi(\cR))}(\psi,\id) < \delta_2$, 
then there exists a smooth diffeomorphism onto its image $\hat{\psi}:\cR\to\tilde\A$ satisfying $d_{\diff^{r_0}(\cR,\hat{\psi}(\cR))}(\hat{\psi},\id) < \epsilon_2$ 
and so that $\hat{\psi}=\id$ on $\big((-\infty,1/4]\times[0,1]\big)$ and $\hat{\psi}=\psi$ on $\big([1/2,\infty)\times[0,1]\big)$.
This claim can be proven easily by writing $\psi$ as an interpolation of $\id$ and $\psi$ constructed via partition of unity by smooth cutt-off functions.
  A key point is that the cutt-off functions can be fixed because of our uniform 
control of the domain as expressed in \eqref{E:nondeg R}.    
%

\medskip

\textbf{Step 5:} Let $\epsilon_3>0$.   We claim that if $q_{n+2}$ is sufficiently large depending on $(r_0,M,q_{n+1},\epsilon_3)$, then 
\ary\label{bound 101}
		 d_{\diff^{r_0}}(\phi^{-1}\circ f^{-q_{n+1}}\circ\phi,\id) < \epsilon_3.  
\eary
Indeed, by Lemma \ref{L:composition} and \eqref{E:phi bound 2}, there exists $\delta_3=\delta_3(r_0,M,q_{n+1},\epsilon_3)>0$ 
so that \eqref{bound 101} holds whenever $d_{\diff^{r_0}}(f^{-q_{n+1}},\id) < \delta_3$.   By Theorem \ref{thm aflxz}, 
$d_{\diff^{r_0}}(f^{-q_{n+1}},\id) < A_{r_0}(\|\rho(f^{q_{n+1}})\|_{\R/\Z}, M^{q_{n+1}})$.    The claim then follows using $\|\rho(f^{q_{n+1}})\|_{\R/\Z}=\b_{n+1}<1/q_{n+2}$ because of \eqref{eq continuefraction}.  

Now, if \eqref{bound 101} is sufficiently small then 
%
we can apply Step 4 to the map $\psi=\phi^{-1}\circ f^{-q_{n+1}}\circ\phi$ 
and conclude that there exists a smooth diffeomorphism onto its image $\hat{\psi}:\cR\to\tilde\A$, satisfying 
\ary\label{sigmahat conditions}
					\hat{\psi}=\left\{\begin{aligned}
							& \id \qquad\qquad\qquad\qquad \mbox{near to the left side of }\  \cR \\
							& \phi^{-1}\circ f^{-q_{n+1}}\circ\phi\qquad\  \mbox{near to the right side of  }\  \cR   
							\end{aligned}\right.
\eary
and so that for all $\epsilon_4>0$, if $q_{n+2}$ is sufficiently large depending also on $(r_0,M,q_{n+1})$, then 
\ary\label{bound psihat}
		 				d_{\diff^{r_0}}(\hat{\psi},\id)<\epsilon_4.    
\eary
%
%
%
Note that by \eqref{sigmahat conditions} $\hat{\psi}$ defines a diffeomorphism $\hat{\psi}:\cR\to [0,1]^2$.    
For example, $\hat{\psi}$ maps the right side of $\cR=\gamma'$ to $\phi^{-1}\circ f^{-q_{n+1}}\circ\phi(\gamma')$, which simplifies to 
$\phi^{-1}\circ f^{-q_{n+1}}(\gamma_*)=\phi^{-1}(\gamma)=\{1\}\times[0,1]$.  

\medskip

\textbf{Step 6:}  Finally we transfer the construction back to $\cB'$.     Set 
\aryst
					\sigma_L:=\phi\circ\hat{\psi}\circ\phi^{-1}:\cB'\to\cB,
\earyst
the domain and target follow from \eqref{coords for B'} and $\hat{\psi}(\cR)=[0,1]^2$.   Then \eqref{sigmaL conditions} follows from \eqref{sigmahat conditions}.   
By \eqref{bound psihat} and Lemma \ref{L:composition} and \eqref{E:phi bound 2}, we can make $d_{\diff^{r_0}}(\phi\circ\hat{\psi}\circ\phi^{-1},\id)$ 
as small as we wish, if $q_{n+2}$ is sufficiently large depending also on $(r_0,M,q_{n+1})$.   This is precisely \eqref{bound sigmaL}.
\end{proof}
%

\begin{cor}\label{L:periodic approximations}
%
Given $\epsilon>0$, if $q_{n+2}$ is sufficiently large depending on $(q_{n+1},r_0,M,\epsilon)$, there exists $g\in\diff^{\infty}(\A)$ 
so that $g^{q_{n+1}}=\id_{\A}$ and $d_{\diff^{r_0}(\A)}(f,g)<\epsilon$.   
\end{cor}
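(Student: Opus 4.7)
The plan is to obtain $g$ directly from Proposition \ref{P:sigma small}: set $g := \sigma \circ f$, so that the periodicity $g^{q_{n+1}} = \id_{\A}$ is automatic, and the only remaining task is to translate a $\diff^{r_0}$-closeness of $\sigma$ to the identity into $\diff^{r_0}$-closeness of $g$ to $f$.

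This translation is routine and is exactly what Lemma \ref{L:composition} is designed for. Applying the ``similarly'' half of that lemma with $\varphi = f$ and $\psi = \sigma$, taking $r_1 = r_0$, $r_2 = r_0 + 1$, $\theta_1 = \theta_2 = 0$ so that the strict comparison $r_1 + \theta_1 < r_2 + \theta_2$ holds, and noting that the standing hypothesis $\|f\|_{\diff^{r_0+6}(\A)} \leq M$ comfortably controls $d_{\diff^{r_0+1}(\A)}(f, \id)$ by a function of $M$, we get: for each $\epsilon > 0$ there exists $\delta = \delta(r_0, M, \epsilon) > 0$ such that
\[
d_{\diff^{r_0}(\A)}(\sigma, \id) < \delta \quad \Longrightarrow \quad d_{\diff^{r_0}(\A)}(\sigma \circ f, f) < \epsilon.
\]

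Given $\epsilon$, choose such a $\delta = \delta(r_0, M, \epsilon)$ and invoke Proposition \ref{P:sigma small} with $\delta$ in place of its $\epsilon$. Provided $q_{n+2}$ is sufficiently large depending on $(q_{n+1}, r_0, M, \delta)$ --- and hence ultimately on $(q_{n+1}, r_0, M, \epsilon)$ --- that proposition produces $\sigma \in \diff^\infty(\A)$ with $d_{\diff^{r_0}(\A)}(\sigma, \id) < \delta$ whose associated $g = \sigma \circ f$ satisfies $g^{q_{n+1}} = \id_{\A}$. Combining with the implication displayed above yields $d_{\diff^{r_0}(\A)}(f, g) < \epsilon$, which is the desired approximation bound.

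No genuine obstacle is expected here, as all the substance of the construction --- the existence of the Brouwer curve, the closing-up deformation, the periodicity, and the smallness of $\sigma$ --- is already packaged into Proposition \ref{P:sigma small}. The corollary is simply the observation that right-composition with a $\diff^{r_0+1}$-bounded map $f$ is a continuous operation, so small perturbations of $\id$ produce small perturbations of $f$ in the appropriate topology.
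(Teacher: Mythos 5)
Your proposal is correct and matches the paper's proof essentially verbatim: the paper likewise sets $g=\sigma\circ f$ from Proposition \ref{P:sigma small} and uses Lemma \ref{L:composition} together with the bound $\norm{f}_{\diff^{r_0+6}(\A)}\leq M$ to convert $d_{\diff^{r_0}(\A)}(\sigma,\id)<\delta$ into $d_{\diff^{r_0}(\A)}(\sigma\circ f,f)<\epsilon$.
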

\begin{proof} 
From Lemma \ref{L:composition} and the bound \eqref{E:M bounds}, there exists $\delta>0$ depending on $(r_0,M,\epsilon)$, so that if $d_{\diff^{r_0}(\A)}(\sigma,\id)<\delta$ then 
$d_{\diff^{r_0}(\A)}(\sigma\circ f,f)<\epsilon$.   Now the statement follows from Proposition \ref{P:sigma small}.   
\end{proof}
%
%

\subsection{Conjugating $g$ to the rigid rotation $R_{p_{n+1}/q_{n+1}}$}\label{S:h}
In the previous sections we saw how to deform $f$ to a diffeomorphism of the form $g:=\sigma\circ f$ 
so that $g^{q_{n+1}}=\id_{\A}$ and $d_{\diff^{r_0}(\A)}(f,g)$ is small.  
It follows automatically from the periodicity of $g$ that there exists $h\in\diff^{\infty}(\A)$ so that 
\ary\label{E:conjugation 1}
		h\circ g\circ h^{-1}=R_{p_{n+1}/q_{n+1}}, 
\eary
where $p_{n+1}/q_{n+1}$ is the $n$th best rational approximation to the rotation number $\a=\rho(f)$, see \ref{S:rational}, and $R_{\theta}$ 
is the rigid rotation through angle $2\pi\theta$.  
Such an abstract existence result is not enough, as we need an $h$ with bounds in the $\diff^{r_0}$-sense, which is the goal of this section.  

To do this we introduce the tiling $\cT'_0,\ldots,\cT'_{q_{{n+1}}-1}$ of $\A$ for the rigid rotation $s:=R_{p_{n+1}/q_{n+1}}$, that is analogous to 
the tiling $\cT_0,\ldots,\cT_{q_{{n+1}}-1}$ introduced in \eqref{D:tiling} for $g$.     We fix a diffeomorphism $h_0:\cT_0\to\cT'_0$, and then define diffeomorphisms $h_j:\cT_j\to\cT'_j$ 
by setting $h_j:=s^j\circ h_0\circ g^{-j}$.   It is obvious that if this collection of maps match together smoothly then they define a smooth diffeomorphism $h:\A\to\A$ via 
$h|_{\cT_j}:=h_j$, which satisfies \eqref{E:conjugation 1} by construction.    To match smoothly we need to check that if $h_i$ and $h_j$ are defined on neighboring 
elements in the tiling, that they match up smoothly on the common side of their domains.    It is enough to check this matching for a single pair of maps with adjacent 
domains, for example we will do this for $h_0$ and $h_{q_{n+1}-q_{n}}$ which share the curve $\gamma$ in their respective domains.    Then by construction all other 
adjacent maps will also match smoothly.    The matching condition on $h_0$ that makes this work is established in Lemma \ref{L: match}.    
It remains then to find such an $h_0$ that additionally has certain $\diff^{r_0}$-bounds, this is the content of Lemma \ref{L:h with bounds}.  

To carry out this program it is convenient to abbrieviate 
\aryst
		s:\A\to\A, \quad s:=R_{p_{n+1}/q_{n+1}}.
\earyst
Set $\gamma':=\{0\}\times[0,1]\subset\A$.   Then the curves in 
\aryst
	\Gamma':=\big\{ \gamma',s(\gamma'),\ldots,s^{q_{n+1}-1}(\gamma') \big\}
\earyst
are mutually disjoint.   So from the definition of $q_n$, as $n$ is odd, and as $s$ is a rigid rotation, we know that the $q_n$-th iterate of $\gamma'$ is the closest 
curve in $\Gamma'$ to $\gamma'$ that lies to the left of $\gamma'$.    More precisely, the interior of 
$\cT'_0:=(s^{q_n}(\gamma'),\gamma')$ 
is a component of the complement $\A\backslash\Gamma'$, and the other components are uniquely indexed by setting: 
\aryst
				\cT'_j:=s^j(\cT'_0)\qquad \forall j\in\Z_{q_{n+1}}.  
\earyst
%
Applying similar reasoning to the tiling $\{\cT_j\}$, or using \eqref{cC as R} and $f^{q_n}(\gamma)=g^{q_n}(\gamma)$ from Lemma \ref{L:sigma 1}, we can write, 
\ary\label{E:gqn}
				\cT_0=(g^{q_n}(\gamma),\gamma).   
\eary
Thus $g^{-q_n}(\cT_0)=\cT_{-q_n}$ and $\cT_0$ are the two elements in the tiling of Definition \ref{D:tiling} that share $\gamma$ as a side.   

\begin{lemma}\label{L: match}
If $h_0:\cT_0\to\cT'_0$ extends to a smooth diffeomorphism between open neighborhoods $\hat{\cT}_0\to \hat{\cT}'_0$ in $\A$, so that 
\ary\label{D:h_0 match} 
				h_0=s^{-q_{n}}\circ h_0\circ g^{q_n}
\eary
on $g^{-q_n}(\hat{\cT}_0)\cap\hat{\cT}_0$ or on a possibly smaller open neighborhood of $\gamma$, then
\ary\label{D:h}
	\qquad\qquad	h : \Int(\cT_j)\to \Int(\cT'_j),\quad h|_{\Int(\cT_j)}:=s^j\circ h_0\circ g^{-j}\quad \forall j\in\Z_{q_{n+1}}
\eary
defines a smooth diffeomorphism from $\A\backslash\Gamma$ to $\A\backslash\Gamma'$ which extends to a 
smooth diffeomorphism $h:\A\to\A$ that satisfies $h\circ g=s\circ h$.    
\end{lemma}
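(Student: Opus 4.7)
The plan is to verify three things in order: (a) that the formula \eqref{D:h} defines a smooth diffeomorphism $h : \A\setminus\Gamma \to \A\setminus\Gamma'$, (b) that $h$ extends smoothly across each curve of $\Gamma$, and (c) that the conjugation $h\circ g = s\circ h$ holds. The whole argument is bookkeeping: the single matching condition \eqref{D:h_0 match} propagates to all curves in $\Gamma$ via the $g^j$-translates.

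For (a): each $\cT_j = g^j(\cT_0)$ and $\cT'_j = s^j(\cT'_0)$, so $s^j\circ h_0\circ g^{-j}$ is a composition of smooth diffeomorphisms sending $\Int(\cT_j)$ onto $\Int(\cT'_j)$. Since $\{\Int(\cT_j)\}_{j\in\Z_{q_{n+1}}}$ is the disjoint union decomposition of $\A\setminus\Gamma$ (by Lemma \ref{L:sigma 2}), and likewise for $\{\Int(\cT'_j)\}$, the restrictions glue trivially to a smooth diffeomorphism on $\A\setminus\Gamma$.

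For (b) — the main step — I identify the tiles meeting at each curve in $\Gamma$ and propagate \eqref{D:h_0 match}. Using $\cT_0 = (g^{q_n}(\gamma),\gamma)$ and $g^{q_{n+1}} = \id_{\A}$, one checks that the two tiles sharing $g^i(\gamma)$ as a common side are $\cT_i$ (on its left) and $\cT_{i-q_n}$ (on its right), with indices taken cyclically in $\Z_{q_{n+1}}$. On neighborhoods of $\gamma$, the hypothesis \eqref{D:h_0 match} says exactly that $h|_{\cT_0} = h_0$ and $h|_{\cT_{-q_n}} = s^{-q_n}\circ h_0\circ g^{q_n}$ agree. Applying the operation $X \mapsto s^i\circ X\circ g^{-i}$ to both sides of \eqref{D:h_0 match} yields
\[
 s^i\circ h_0\circ g^{-i} \;=\; s^{i-q_n}\circ h_0\circ g^{-(i-q_n)}
\]
on the image of the neighborhood of $\gamma$ under $g^i$, which is a neighborhood of $g^i(\gamma)$. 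This is precisely the assertion that $h|_{\cT_i}$ and $h|_{\cT_{i-q_n}}$ agree on a neighborhood of $g^i(\gamma)$. Since this holds for every $i \in \Z_{q_{n+1}}$, the locally defined smooth extensions fit together to a smooth map $h:\A\to\A$. The same argument applied to $h^{-1}$ (built analogously from $h_0^{-1}$, with $g$ and $s$ swapped) shows $h$ is a smooth diffeomorphism.

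For (c): pick $x \in \Int(\cT_j)$. Then $g(x) \in \Int(\cT_{j+1})$ by definition of the tiling, so
\[
h(g(x)) = s^{j+1}\circ h_0\circ g^{-(j+1)}\circ g(x) = s\circ \bigl(s^{j}\circ h_0\circ g^{-j}\bigr)(x) = s\circ h(x),
\]
where cyclicity is consistent because both $g^{q_{n+1}}=\id$ and $s^{q_{n+1}} = R_{p_{n+1}} = \id$. The identity then extends by continuity across $\Gamma$. This finishes the proof. No analytic obstacle arises; the only point requiring care is the combinatorial identification of adjacent tiles across each curve of $\Gamma$, which is handled by \eqref{cC as R} together with the cyclic index shift by $q_n$.
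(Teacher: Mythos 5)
Your proposal is correct and follows essentially the same route as the paper: define $h$ tile-by-tile, identify $\cT_i$ and $\cT_{i-q_n}$ as the tiles adjacent across $g^i(\gamma)$, and propagate the single matching condition \eqref{D:h_0 match} to every curve of $\Gamma$ by conjugating with $s^i(\cdot)g^{-i}$ (the paper phrases this as $h=s^j\circ h\circ g^{-j}$ on $\A\backslash\Gamma$ together with the extension over $\gamma$, which is the same mechanism). The verification of $h\circ g=s\circ h$ on the open tiles followed by continuity is likewise identical to the paper's Step 1.
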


\begin{proof}\leavevmode

\textbf{Step 1:} Clearly \eqref{D:h} defines a smooth diffeomorphism 
$h:\A\backslash\Gamma\to \A\backslash\Gamma'$ satisfying
\ary\label{conjugation 1}
					h\circ g = s\circ h\qquad \mbox{on } \A\backslash\Gamma.
\eary
%

\smallskip 

\textbf{Step 2:} We show that $h:\A\backslash\Gamma\to \A\backslash\Gamma'$ extends smoothly over $\gamma$.   
Indeed, we observed from \eqref{E:gqn} that $\gamma$ is the side shared by $\cT_0$ and $\cT_{-q_n}$.   
By \eqref{D:h}, $h=h_0$ on $\Int(\cT_0)$ and $h=s^{-q_{n}}\circ h_0\circ g^{q_n}$ on $\Int(\cT_{-q_n})$.   Therefore by \eqref{D:h_0 match} $h$ agrees  
with the extension of $h_0:\hat{\cT}_0\to \hat{\cT}'_0$ on an open neighborhood $U\subset g^{-q_n}(\hat{\cT}_0)\cap\hat{\cT}_0\subset\A$ of $\gamma$.   
Thus $h=h_0$ on $U\backslash\{\gamma\}$ and so $h$ extends smoothly (equal to $h_0$) on all of $U$, 
which it maps diffeomorphically to the open neighborhood $U':=h_0(U)$ of $\gamma'$.   

\smallskip 

\textbf{Step 3:} Now we show that $h$ extends to an element of $\diff^{\infty}(\A)$.   
Indeed, for $j\in\Z_{q_{n+1}}$, by step 1 $h=s^j\circ h\circ g^{-j}$ on $\A\backslash\Gamma$ so also on $g^j(U)\backslash\Gamma=g^j(U)\backslash\{g^j(\gamma)\}$.   
So $h=s^j\circ h|_{U\backslash\{\gamma\}}\circ g^{-j}$ 
on $g^j(U)\backslash\{g^j(\gamma)\}$.    By Step 2 this extends smoothly to $s^j\circ h|_{U}\circ g^{-j}$.  
\end{proof}
It remains to construct a diffeomorphism $h_0:\cT_0\to\cT'_0$ satisfying the conditions of Lemma \ref{L: match}.   
Recall the admissible coordinates in \eqref{phi bounds} provide a diffeomorphism $\phi$ from a neighborhood of $[0,1]^2\subset\tilde\A$ to a 
neighborhood of $\cB=\cT_0$ in $\A$, satisfying $\phi\circ T=f^{-q_n}\circ\phi$.    Consider  
\ary\label{D:h_0}
				\hat{\phi}:=\phi\circ\Delta:\cT'_0\to\cT_0
\eary
where $\Delta$ denotes the map $(x,y)\mapsto (q_{n+1}x,y)$.   The idea is that by composing with $\Delta$ we obtain something like an admissible coordinate, but with 
domain the ''slice'' $\cT'_0=[1-1/q_{n+1},1]\times[0,1]\subset\A$, rather than $[0,1]^2$.   
%


\begin{lemma}\label{L:h with bounds}
The diffeomorphism $h_0:=\hat{\phi}^{-1}:\cT_0\to\cT'_0$ satisfies the matching condition \eqref{D:h_0 match}.   
Hence expression \eqref{D:h} defines an element $h\in\diff^{\infty}(\A)$ with 
\ary\label{conjugates g} 
		h\circ g=s\circ h. 
\eary
Moreover, $d_{\diff^{r_0}(\A)}(h,\id)$ is bounded in terms of $q_{n+1}, M, r_0$. 
\end{lemma}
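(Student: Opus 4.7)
The plan is to verify the matching condition of Lemma \ref{L: match} for $h_0:=\hat\phi^{-1}$, then to assemble the global conjugacy via that lemma, and finally to control the derivatives of $h$ tile by tile.

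\textbf{Step 1 (Intertwining identity for $\hat\phi$).}
The extension of $\phi$ to $\tilde\A$ satisfies $\phi\circ T=F^{-q_{n}}\circ\phi$, where $F$ is the lift of $f$ with $\rho(F)=\alpha$. The rescaling $\Delta(x,y)=(q_{n+1}x,y)$, viewed as a map on $\tilde\A$, intertwines the natural lift $\tilde s^{-q_{n}}$ of $s^{-q_{n}}$ (translation by $1/q_{n+1}$, using $q_{n}p_{n+1}-p_{n}q_{n+1}=-1$ since $n$ is odd) with $T$:
\[
\Delta\circ\tilde s^{-q_{n}}(x,y)=(q_{n+1}x+1,\,y)=T\circ\Delta(x,y).
\]
Composing with $\phi$ yields $\hat\phi\circ\tilde s^{-q_{n}}=F^{-q_{n}}\circ\hat\phi$ on $\tilde\A$, which after projection gives, on a small open neighborhood of $\gamma$, the identity $h_0\circ f^{q_{n}}=s^{q_{n}}\circ h_0$ (interpreted via the chosen lifts).

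\textbf{Step 2 ($g^{q_{n}}=f^{q_{n}}$ near $\gamma$ in $\cC$).}
Recall from \eqref{cC as R} that $\cT_{-q_{n}}=\cC$. Iterating $g$ from $\cC$ through $\cT_{-q_{n}+1},\ldots,\cT_{-1}$ into $\cT_0$ and using \eqref{L:g 2}, the map $g$ agrees with $f$ at every step except the last, where $g|_{\cT_{-1}}=\sigma_L\circ f$. Hence $g^{q_{n}}|_{\cC}=\sigma_L\circ f^{q_{n}}$. For $p\in\cC$ sufficiently close to $\gamma$, the point $f^{q_{n}}(p)$ lies close to $f^{q_{n}}(\gamma)$, the left boundary of $\cB'$; but by \eqref{sigmaL conditions} we have $\sigma_L=\id$ on a fixed neighborhood of that boundary. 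Thus $\sigma_L(f^{q_{n}}(p))=f^{q_{n}}(p)$, i.e.\ $g^{q_{n}}(p)=f^{q_{n}}(p)$ on a neighborhood of $\gamma$ in $\cC$. Combined with Step 1, this gives $h_0=s^{-q_{n}}\circ h_0\circ g^{q_{n}}$ on that neighborhood, i.e.\ the matching condition \eqref{D:h_0 match}. Applying Lemma \ref{L: match} produces $h\in\diff^\infty(\A)$ with $h\circ g=s\circ h$, proving \eqref{conjugates g}.

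\textbf{Step 3 (Bounds on $h$).}
On each tile, $h|_{\cT_j}=s^j\circ h_0\circ g^{-j}$ and $h^{-1}|_{\cT'_j}=g^j\circ\hat\phi\circ s^{-j}$. The rotation $s^j$ has $C^{r_0}$-norm controlled independently of $j$. The map $\hat\phi=\phi\circ\Delta$ obeys $\|\hat\phi\|_{C^{r_0}}\lesssim q_{n+1}^{r_0}\,\|\phi\|_{C^{r_0}([0,1]^2)}$, which by \eqref{phi bounds} is bounded by a function of $(q_{n+1},M,r_0)$; the inverse function formula then bounds $\|h_0\|_{\diff^{r_0}}$ in the same sense. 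Choosing $\epsilon$ small in Proposition \ref{P:sigma small} we may assume $\|\sigma\|_{\diff^{r_0}}\le 2$, so $\|g\|_{\diff^{r_0}}\le C(M)$; standard iterated composition estimates then give $\|g^{\pm j}\|_{\diff^{r_0}}\le C(M,r_0)^{q_{n+1}}$ for $|j|\le q_{n+1}$. Applying the composition estimates of \ref{SS:compositions} on each tile and taking the supremum over $j\in\Z_{q_{n+1}}$ bounds $\|h\|_{C^{r_0}(\A)}$ and $\|h^{-1}\|_{C^{r_0}(\A)}$ by a function of $(q_{n+1},M,r_0)$; the $C^0$-distance to the identity is trivially bounded by the diameter of $\A$. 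This yields the bound on $d_{\diff^{r_0}(\A)}(h,\id)$.

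\textbf{Anticipated difficulty.} The main delicate point is Step 1--2, in particular the correct bookkeeping of lifts, orientations, and the precise sense in which the intertwining descends to the annulus near $\gamma$. Once the algebraic identity is set up correctly and the equality $g^{q_{n}}=f^{q_{n}}$ is extracted from the location of $\mathrm{supp}(\sigma)$, the rest is a routine composition estimate.
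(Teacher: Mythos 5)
Your proposal is correct and follows essentially the same route as the paper: the intertwining identity $\hat\phi\circ \tilde s^{-q_n}=f^{-q_n}\circ\hat\phi$ obtained from $\Delta\circ\tilde s^{-q_n}=T\circ\Delta$ and $\phi\circ T=f^{-q_n}\circ\phi$, the replacement of $f^{q_n}$ by $g^{q_n}$ near $\gamma$ using that $\sigma_L=\id$ near the left side of $\cB'$ (via \eqref{L:g 2}), and tile-by-tile composition estimates for the $\diff^{r_0}$-bound. Your Step 3 is in fact slightly more explicit than the paper's about the dependence of the iterate bounds $\|g^{\pm j}\|_{\diff^{r_0}}$ on $q_{n+1}$, which is harmless since the final bound is allowed to depend on $q_{n+1}$.
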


\begin{proof} 
From \eqref{D:h_0} $h_0:=\hat{\phi}^{-1}$ has constant Jacobian.   
A simple computation shows $\Delta\circ R_{-1/q_{n+1}}=T^{-1}\circ\Delta$ 
on $\tilde\A$.    As $\phi$ has a smooth extension to a small open neighborhood of $[0,1]^2$, on which $\phi\circ T=f^{-q_n}\circ\phi$, so  
$\hat{\phi}$ has a smooth extension to a small open neighborhood of $\cT'_0$, in $\A$, on which 
\ary\label{hatphi fqn}
			\hat{\phi}\circ R_{-1/q_{n+1}}= f^{q_n}\circ\hat{\phi}.
\eary
Indeed, $\hat{\phi}\circ R_{-1/q_{n+1}}=\phi\circ\Delta\circ R_{-1/q_{n+1}}=\phi\circ T^{-1}\circ\Delta=f^{q_n}\circ\phi\circ\Delta=f^{q_n}\circ\hat{\phi}$.  
From \eqref{hatphi fqn} it follows that the smooth extension $h_0:\hat{\cT}_0\to\hat{\cT'}_0$ 
 between neighborhoods of $\cT_0$ and $\cT'_0$ in $\A$, given by the extension of $\hat{\phi}^{-1}$, 
satisfies 
\ary\label{h_0 fqn}
			h_0=R_{1/q_{n+1}}\circ h_0\circ f^{q_n}
\eary
on $\cT_0\cap\cT_{-q_n}$, which is an open neighborhood of $\gamma$.  
We claim that the right hand side of \eqref{h_0 fqn} is equal to $s^{-q_n}\circ h_0\circ g^{q_n}$ on a possibly smaller open neighborhood of $\gamma$.   
Indeed, $R_{1/q_{n+1}}=(R_{p_{n+1}/q_{n+1}})^{-p_n}=s^{-q_n}$ by \eqref{relation pn qn}, and  $g^{q_n}=f^{q_n}$ on $\cT_0$ by \eqref{L:g 2}.   
Meanwhile on $\cT_{-q_n}$ we have $g^{q_n}=\sigma_L\circ f^{q_n}$, from \eqref{L:g 2}.    
However, in \eqref{sigmaL conditions} we chose $\sigma_L$ to be the identity near to the 
left side of its domain, so $g^{q_n}(x)=f^{q_n}(x)$ if $x\in \cT_{-q_n}$ is sufficiently close to the left side of $\cT_{-q_n}$.    
This proves the claim, and from \eqref{h_0 fqn}
\aryst
							h_0=s^{-q_n}\circ h_0\circ g^{q_n}
\earyst 
on an open neighborhood of $\gamma$ in $\cT_0\cap\cT_{-q_n}$, proving \eqref{D:h_0 match}.   

To prove the bounds on $d_{\diff^{r_0}(\A)}(h,\id)$ it suffices to bound $\|h\|_{\diff^{r_0}(\A)}$ 
as $\A$ is compact.    By \eqref{D:h} it suffices to have analogous bounds on the maps $g$, $s$ and $h_0$ separately.   
Proposition \ref{P:sigma small} and \eqref{E:M bounds} give the bounds on $g=\sigma\circ f$, while $s$ is an isometry.
The bounds on $h_0=\Delta^{-1}\circ\phi^{-1}$ follow from \eqref{phi bounds}, as $\Delta$ is linear and depends only on $q_{n+1}$.   
\end{proof}

\subsection{An approximation with rotation number $\a$}\label{SS:approx a} 
Let $\epsilon>0$.   By Corollary \ref{L:periodic approximations}, if $q_{n+2}$ is sufficiently large depending on $q_{n+1}$ and $(r_0,M,\epsilon)$, there exists 
$g\in\diff^\infty(\A)$ satisfying $g^{q_{n+1}}=\id_{\A}$ and 
\ary\label{E:dist g}
				d_{\diff^{r_0}(\A)}(g,f)<\epsilon/2.    
\eary
Let $h\in\diff^\infty(\A)$ be the diffeomorphism produced by Lemma \ref{L:h with bounds}.   Then 
$g=h^{-1}\circ R_{p_{n+1}/q_{n+1}}\circ h$ 
and $d_{\diff^{r_0}(\A)}(h,\id)$ is bounded in terms of $q_{n+1}, M, r_0$.   If $q_{n+2}$ is even larger then: 
%

\begin{lemma}\label{P: approx with rot a}
If $q_{n+2}$ is sufficiently large depending on $q_{n+1}$ and $(r_0,M,\epsilon)$,
\ary\label{E:approx with rot a}
				d_{\diff^{r_0-1}(\A)}(h^{-1} R_{\rho(f)} h, f) < \epsilon.
\eary
\end{lemma}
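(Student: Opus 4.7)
The approach is via triangle inequality:
\begin{equation*}
d_{\diff^{r_0-1}(\A)}(h^{-1}R_{\a} h,\, f) \;\leq\; d_{\diff^{r_0-1}(\A)}(h^{-1}R_{\a} h,\, g) \;+\; d_{\diff^{r_0-1}(\A)}(g,\, f).
\end{equation*}
The second term is at most $\epsilon/2$ by \eqref{E:dist g} together with the inclusion $\diff^{r_0}\subset\diff^{r_0-1}$. For the first term, the plan is to factor $R_\a = R_{p_{n+1}/q_{n+1}}\circ R_\delta$ where
\begin{equation*}
\delta \;:=\; \a - \tfrac{p_{n+1}}{q_{n+1}}, \qquad |\delta| \;=\; \frac{\beta_{n+1}}{q_{n+1}} \;<\; \frac{1}{q_{n+1}\,q_{n+2}},
\end{equation*}
the bound on $|\delta|$ following from $\|q_{n+1}\a - p_{n+1}\|_{\R/\Z} = \beta_{n+1} < 1/q_{n+2}$, cf.\ \eqref{E:pn distance} and \eqref{eq continuefraction}. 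Inserting $h\,h^{-1}$ between the two rotations gives the clean factorisation $h^{-1} R_\a h = g\circ (h^{-1} R_\delta h)$.

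Next, I would invoke Lemma \ref{L:composition} with $\varphi = g$, whose $\diff^{r_0}$-norm is bounded by a constant depending only on $M$ (via \eqref{E:M bounds} and \eqref{E:dist g}), and with $\psi = h^{-1}R_\delta h$. This reduces the task to showing that $h^{-1}R_\delta h$ can be made arbitrarily close to $\id$ in $\diff^{r_0-1}(\A)$ by taking $q_{n+2}$ large. For this step I would exploit that $R_\delta$ is pure translation by $(\delta, 0)$, so that locally $h^{-1} R_\delta h - \id = h^{-1}(h(\cdot)+(\delta,0)) - h^{-1}(h(\cdot))$; a standard mean-value-type estimate, applied to each partial derivative of order at most $r_0-1$, then yields a bound of the form
\begin{equation*}
\|h^{-1} R_\delta h - \id\|_{\diff^{r_0-1}(\A)} \;\leq\; C\!\left(\|h\|_{\diff^{r_0}(\A)},\, r_0\right) \cdot |\delta|.
\end{equation*}
Since $\|h\|_{\diff^{r_0}(\A)}$ is bounded in terms of $(q_{n+1}, M, r_0)$ by Lemma \ref{L:h with bounds}, and $|\delta| < 1/(q_{n+1}q_{n+2})$, the right-hand side drops below any prescribed threshold once $q_{n+2}$ is taken sufficiently large depending on $(q_{n+1}, r_0, M, \epsilon)$, as the lemma permits.

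The one subtlety to watch is the loss of a single derivative: the $\diff^{r_0-1}$ estimate on $h^{-1}R_\delta h$ exploits the $\diff^{r_0}$-bound on $h$, because each spatial derivative of $h^{-1}(h+(\delta,0))$ picks up an extra derivative of $h^{-1}$. This is intrinsic, and it is precisely what forces the drop in regularity from the $\diff^{r_0}$-approximation $g$ of $f$ from Corollary \ref{L:periodic approximations} to the $\diff^{r_0-1}$-approximation $h^{-1}R_\a h$ claimed here.
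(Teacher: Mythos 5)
Your proposal is correct and follows essentially the same route as the paper: triangle inequality through $g$, the bound $|\a-p_{n+1}/q_{n+1}|\leq 1/(q_{n+1}q_{n+2})$, and the $\diff^{r_0}$-control on $h$ from Lemma \ref{L:h with bounds}, with the same unavoidable loss of one derivative. The only (harmless) difference is that you make the dependence on $\theta$ quantitative via an explicit Lipschitz/mean-value estimate for $\theta\mapsto h^{-1}R_\theta h$, whereas the paper settles for uniform continuity of $(\theta,\hat h)\mapsto\hat h^{-1}R_\theta\hat h$ on bounded subsets, obtained from the compact inclusion $\diff^{r_0}(\A)\hookrightarrow\diff^{r_0-1}(\A)$.
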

\begin{proof}
The map $\R\to \diff^{r_0-1}(\A)$ given by $\theta\mapsto h^{-1}\circ R_{\theta}\circ h$ is continuous and equals $g$ at $\theta=p_{n+1}/q_{n+1}$.  
So there exists $\eta>0$ so that 
		\ary\label{E:dist g 2}
						d_{\diff^{r_0-1}(\A)}({h}^{-1} R_{\theta}{h},g)<\epsilon/2
		\eary
		for all $|\theta-p_{n+1}/q_{n+1}|<\eta$.   Due to the compact inclusion $\diff^{r_0}(\A)\to \diff^{r_0-1}(\A)$ the map 
		$\R\times\diff^{r_0}(\A)\to \diff^{r_0-1}(\A)$, $(\theta,\hat{h})\mapsto \hat{h}^{-1}\circ R_{\theta}\circ\hat{h}$ is uniformly continuous on bounded 
		subsets.   
		Hence $\eta$ depends only on $\epsilon$ and $d_{\diff^{r_0}(\A)}(h,\id)$, i.e., on $\epsilon$, $q_{n+1}, M$ and $r_0$.     
		If $q_{n+2}$ is sufficiently large compared with $\eta$, then 
                 $|\alpha-p_{n+1}/q_{n+1}|=\frac{1}{q_{n+1}}|q_{n+1}\alpha-p_{n+1}|=\beta_{n+1}/q_{n+1}\leq 1/(q_{n+1}q_{n+2})<\eta$ and so we can 
                 apply \eqref{E:dist g 2} to $\theta=\a$.    Combining with \eqref{E:dist g} 
                 gives \eqref{E:approx with rot a}.  
\end{proof}	
	
\subsection{Area preservation}\label{SS:area}
	
To complete the proof of Theorem \ref{thm. open dense}  it remains to modify $h$ from the last Lemma to make it area preserving.   

\begin{prop}\label{P: area preserving}
Given $\epsilon>0$, if $q_{n+2}$ is sufficiently large depending on $q_{n+1}$ and $(r_0,M,\epsilon)$, there exists $h\in\diff(\A,\omega)$ so that 
\ary\label{E:area preserving}
				d_{\diff^{r_0-2}(\A)}(h^{-1} R_{\rho(f)} h, f) < \epsilon.
\eary
\end{prop}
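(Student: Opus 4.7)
The plan is to post-compose $h$ from Lemma \ref{P: approx with rot a} by a Moser-type correction $\psi$, so that $\tilde h := h\circ\psi$ is area-preserving while $\tilde h^{-1} R_\a \tilde h$ remains $\epsilon$-close to $f$ in $\diff^{r_0-2}(\A)$.

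The first step is to show the Jacobian $J_h := h^*\omega/\omega$ is close to $1$ in $C^{r_0-1}$. By Lemma \ref{L:h with bounds}, on each tile $\cT_j$ one has $h = s^j\circ h_0\circ g^{-j}$ with $s = R_{p_{n+1}/q_{n+1}}$ an isometry and $h_0 = \hat\phi^{-1}$ of constant Jacobian $c := 1/(q_{n+1}\omega(\cB))$, so $J_h|_{\cT_j} = c\cdot J_{g^{-j}}$. From \eqref{L:g 2} the map $g = \sigma\circ f$ agrees with $f$ except on two tiles, so any $g^{\pm j}$-orbit with $j < q_{n+1}$ applies $J_\sigma^{\pm 1}$ at most twice; together with chain-rule bounds on $\|f^{\pm i}\|_{C^{r_0-1}}$ (dominated by $M^{q_{n+1}}$) this yields $\|J_{g^{\pm j}} - 1\|_{C^{r_0-1}} \leq C(q_{n+1},M,r_0)\|\sigma\|_{\diff^{r_0}(\A)}$. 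Since $\omega(\cT_j) = \int_{\cT_0} J_{g^j}\,d\omega$, the $C^0$-smallness of $J_{g^j} - 1$ combined with $\sum_j \omega(\cT_j) = 1$ forces $\omega(\cT_0) = q_{n+1}^{-1}(1 + O(\|\sigma\|_{C^0}))$, hence $c = 1 + O(\|\sigma\|_{C^0})$. By Proposition \ref{P:sigma small}, $\|\sigma\|_{\diff^{r_0}}$ may be made arbitrarily small at the cost of enlarging $q_{n+2}$, so for any prescribed $\eta > 0$ one obtains $\|J_h - 1\|_{C^{r_0-1}(\A)} < \eta$.

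Next, apply the Moser trick to $\omega$ and $\mu := h^*\omega$. Writing $\mu - \omega = \rho\,dx\wedge dy$ with $\int_\A \rho = 0$, and decomposing $\rho = \bar\rho(y) + \tilde\rho(x,y)$ into its $x$-mean and mean-zero parts, the explicit primitive
\[
\beta = -\Bigl(\int_0^y \bar\rho(s)\,ds\Bigr)dx + \Bigl(\int_0^x \tilde\rho(s,y)\,ds\Bigr)dy
\]
satisfies $d\beta = \mu - \omega$ on $\A$, is well-defined in $x \in \R/\Z$ by the zero-mean of $\tilde\rho$, and is tangential to $\partial\A$ (its $dx$-coefficient vanishes at $y = 0, 1$ by $\int_\A \rho = 0$). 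Integrating the time-dependent vector field $X_t$ defined by $\iota_{X_t}\omega_t = -\beta$ with $\omega_t = \omega + t(\mu - \omega)$ produces $\psi \in \diff^\infty(\A)$ preserving $\partial\A$ with $\psi^*\mu = \omega$. Since $\beta$ is built from $\mu - \omega$ by pure integration, there is no derivative loss and $d_{\diff^{r_0-1}(\A)}(\psi,\id) \leq C(q_{n+1},M)\,\eta$. Then $\tilde h := h\circ\psi$ lies in $\diff^\infty(\A,\omega)$, and
\[
\tilde h^{-1} R_\a \tilde h = \psi^{-1}(h^{-1} R_\a h)\psi.
\]
Since $\|h^{-1}R_\a h\|_{\diff^{r_0-1}}$ is bounded by a function of $(q_{n+1},M,r_0)$, two applications of Lemma \ref{L:composition} with $(r_1,r_2) = (r_0 - 2, r_0 - 1)$ give $d_{\diff^{r_0-2}(\A)}(\tilde h^{-1} R_\a \tilde h,\, h^{-1} R_\a h) < \epsilon/2$ once $\eta$ is chosen small enough. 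Combining with Lemma \ref{P: approx with rot a} applied with $\epsilon/2$ in place of $\epsilon$, the triangle inequality yields \eqref{E:area preserving}.

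The principal obstacle is the first step. Although $\sigma$ enters each orbit only twice, the chain-rule factor $\|f\|_{C^{r_0-1}}^{q_{n+1}}$ forces $\|\sigma\|_{\diff^{r_0}}$ to be roughly $M^{-q_{n+1}}\eta$, which via Proposition \ref{P:sigma small} demands a lower bound on $q_{n+2}$ depending exponentially on $q_{n+1}$; this is absorbed into the threshold $S(q_{n+1})$ appearing in the overall statement of Theorem \ref{thm. open dense}.
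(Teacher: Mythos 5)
Your proposal is correct and follows essentially the same route as the paper: one shows the Jacobian of the conjugacy from Lemma \ref{L:h with bounds} is $C^{r_0-1}$-close to a constant (hence to $1$, by the total-area normalization) using the tile decomposition $h=s^j\circ h_0\circ g^{-j}$ and the fact that only $\sigma$ has non-constant Jacobian, and then corrects $h$ by a Moser-type diffeomorphism, concluding via Lemma \ref{L:composition}. The only cosmetic difference is that you write out an explicit primitive one-form for the Moser flow where the paper simply invokes Dacorogna--Moser \cite{DacMos}.
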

\begin{proof}
	Let $\epsilon>0$.   By Proposition \ref{P: approx with rot a} there exists $h_1\in\diff^\infty(\A)$ so that 
\ary\label{E:h_1}
				d_{\diff^{r_0-1}(\A)}(h_1^{-1} R_{\a} h_1, f) < \epsilon/2
\eary
where $\a=\rho(f)$.   
	Although $h_1$ is not close to the identity, we will show that $h_1^*\omega$ is close to $\omega$ and then apply Dacorogna-Moser \cite{DacMos}. 
	
	By Lemma \ref{L:composition} there exists $\delta>0$ so that 
	 \ary\label{E:h_2}
    d_{\diff^{r_0 - 2}(\A)}(h_2^{-1}h_1^{-1} R_{\a} h_1h_2, h_1^{-1} R_{\a} h_1) < \epsilon/2
    \eary
    	for every $h_2\in\diff^\infty(\A)$ satisfying 
	$d_{\diff^{r_0 - 2}(\A)}(h_2, {\rm Id})<\delta.$   
	Consider now the Jacobian function 
	\aryst
	\lambda := \det(Dh_1)   \in C^\infty(\tilde \A).
	\earyst
	We wish to show that $\lambda$ converges up to $r_0$-many derivatives to the constant function $1$, as $q_{n+2}$ goes to infinity.    
	From 
	\ary\label{integral lambda}
	\int_{\A} \lambda\,\omega = \int_{\A} h_1^*\omega = \int_{\A} \omega = 1
	\eary 
	there exists a point in $\A$ where $\lambda=1$.   Therefore it remains to show the derivatives of $\lambda$ can be made small if $q_{n+2}$ is large.  
%
	From \eqref{D:h}, on each $\cT_j$, $0\leq j\leq q_{n+1}-1$, we have $h_1=s^j\circ h_0\circ g^{-j}$, 
	where $h_0=\hat{\phi}^{-1}$, as in Lemma \ref{L:h with bounds}, has constant Jacobian.    Since $s=R_{p_{n+1}/q_{n+1}}$ has 
	constant Jacobian and $g=\sigma\circ f$ where $f$ has constant Jacobian, we conclude that on each $\cT_j$
	\aryst
	\log \lambda =  \log \det (D\sigma^{-j}) + c_j
	\earyst
	for some constant $c_j$.    
	Therefore, if $\sigma$ is chosen to come from Proposition \ref{P:sigma small}, then for any given $\epsilon_1>0$ we can find $q_{n+2}$ sufficiently large, depending on $q_{n+1}$ and $(r_0,M,\epsilon_1)$, so that 
	\ary\label{estimate loglambda}
	\qquad \sup_{1 \leq r \leq r_0-1}\norm{D^r(\log\lambda)}_{C^0(\A)}  &\leq& \epsilon_1, 
	\eary 
	since $\A$ is covered by $\cT_0,\ldots,\cT_{q_{n+1}-1}$.
	As observed from \eqref{integral lambda}, there exists a point where $\lambda=1$, so from \eqref{estimate loglambda}
	we conclude that if $q_{n+2}$ is sufficiently large, then $\log\lambda$, and hence $\lambda$, 
	is bounded on $\A$, by some constant depending only on $M$, $r_0$ and $q_{n+1}$.   
	Hence, there exists $C>0$ depending only on $(r_0,M,q_{n+1})$, so that 
\aryst
\norm{\lambda -1}_{C^{0}(\A)} <C\|\log\lambda - \log(1)\|_{C^{0}(\A)}< 2C \norm{D(\log \lambda)}_{C^{0}(\A)}.
\earyst
Consequently by \eqref{estimate loglambda}, for any $\epsilon_2 > 0$, we have $\norm{\lambda -1}_{C^{0}(\A)} < \epsilon_2$, 
if $q_{n+2}$ is sufficiently large depending on $(r_0,M, \epsilon_2)$ and $q_{n+1}$.  Now, using that $D\lambda=\lambda\cdot D(\log\lambda)$ 
and derivatives thereof, and the estimates \eqref{estimate loglambda}, we conclude inductively on the number of derivatives, that for all $\epsilon_3>0$, 
$\norm{\lambda -1}_{C^{r_0-1}(\A)} < \epsilon_3$, 
if $q_{n+2}$ is sufficiently large depending on $(r_0,M, \epsilon_3)$ and $q_{n+1}$.  
By  (\cite[Theorem 1]{DacMos}), there exists $h_2 \in \diff^\infty( \A )$ such that $(h_2)^* (\lambda\omega) = \omega$.  
    Moreover, by \cite[Theorem 2 and Lemma 3]{DacMos}, we can choose $h_2$ with $d_{\diff^{r_0 - 2}(\A)}(h_2, {\rm Id})<\delta$ 
    provided $\norm{\lambda -1}_{C^{r_0-1}(\A)}$ is sufficiently small.   Hence, we can assume that $h_2$ also satisfies \eqref{E:h_2}, 
    provided $q_{n+2}$ is sufficiently large depending on $(r_0,M, \epsilon)$ and $q_{n+1}$. 
    Now set $h:=h_1\circ h_2$.    Then combining \eqref{E:h_2} with \eqref{E:h_1} gives \eqref{E:area preserving}, and 
     $h^*\omega=(h_2)^*(h_1)^*\omega=(h_2)^*(\lambda\omega)=\omega$.  
\end{proof}

	
	This completes the proof of Theorem \ref{thm. open dense}.  
\end{proof}

\end{document}